\documentclass[11pt]{article}					% decomment for regular version
\usepackage{amsthm}     % math
\usepackage[english]{babel}                 % needed
\usepackage{amsmath, amssymb, amsfonts,hyperref}     % math
\usepackage{dsfont}
\usepackage{graphicx}                       % includegraphics
\usepackage{color,bbm}                          % \color{}
\usepackage[normalem]{ulem}                           % strike on text \sout{}; without [normalem] emph is replaced by strike below text
\usepackage{tikz}
\usepackage{soul}
\usepackage{subfigure}
%
%% Graphic path
\graphicspath{{Images/}}
%
%% New commands

\newcommand{\A}{\mathcal{A}}
\newcommand{\B}{\mathcal{B}}
\newcommand{\F}{\mathcal{F}}
\newcommand{\G}{\mathcal{G}}
\newcommand{\E}{\mathcal{E}}
\renewcommand{\H}{\mathcal{H}}
\newcommand{\I}{\mathcal{I}}
\newcommand{\J}{\mathcal{J}}

\newcommand{\R}{\mathcal{R}}
\renewcommand{\S}{\mathcal{S}}
\newcommand{\V}{\mathcal{V}}

\newcommand{\X}{\mathcal{X}}
\DeclareMathOperator*{\argmin}{arg\,min}
\newcommand{\RR}{\mathbb{R}}

\newcommand{\sgn}[1]{\mathrm{sgn}\left(#1\right)}
\newcommand{\fin}{f^{\mathrm{in}}}
\newcommand{\fout}{f^{\mathrm{out}}}
\newcommand{\rhomax}{B}
\newcommand{\fmax}{C}
\newcommand{\be}{\begin{equation}}
\newcommand{\ee}{\end{equation}}

\newcommand{\ds}{\displaystyle}
\newcommand{\ba}{\begin{array}}\newcommand{\ea}{\end{array}}
\newcommand{\de}{\mathrm{d}}
%
% ONE OF THE FOLLOWING

%\newcommand{\margindef}[1]{\marginpar{\tiny \color{purple} \raggedright {#1}}}
%
 % suggested changes
%\newcommand{\ksmargingreen}[1]{\marginpar{\tiny \color{green} \raggedright #1}} % look for the parallel document for these changes
%\newcommand{\elcomment}[1]{{\color{blue} }}
%\newcommand{\margin}[1]{\marginpar{\tiny \color{blue} \raggedright }}
\newcommand{\margindef}[1]{}

\newcommand{\zerobf}{\boldsymbol{0}	}

\newcommand{\defineas}{:=}

%
%% Environments
\newtheorem{theorem}{Theorem}
\newtheorem{proposition}{Proposition}
\newtheorem{definition}{Definition}

\newtheorem{lemma}{Lemma}
\newtheorem{remark}{Remark}
\newtheorem{example}{Example}

\textwidth 6.2in \textheight 8.6in \setlength{\topmargin}{-0.4in}
\setlength{\oddsidemargin}{0.15in}
\setlength{\evensidemargin}{0.15in}
\title{
Stability Analysis and Control Synthesis\\ for Dynamical Transportation Networks\thanks{The first three authors were partially supported by the Swedish Research Council through the Junior Research Grant Information Dynamics in Large Scale Networks and the Linnaeus Excellence Center, LCCC.}}

\author{Enrico Lovisari\thanks{E. Lovisari (corresponding author) is with the GIPSA-lab, UMR-CNRS 5216, and the NeCS team at INRIA Grenoble Rh\^{o}ne-Alpes, Grenoble, France, {\tt \small enrico.lovisari@gipsa-lab.fr}}, Giacomo Como\thanks{G. Como and A. Rantzer are with the Department of Automatic Control, Lund University, SE-221 00 Lund, Sweden {\tt\small {giacomo.como, anders.rantzer}@control.lth.se}.}, Anders Rantzer\footnotemark[3], and Ketan Savla\thanks{K. Savla is with the Sonny Astani Department of Civil and Environmental Engineering, University of Southern California, Los Angeles, CA 90089-2531 {\tt\small ksavla@usc.edu}}% <-this % stops a space
% <-this % stops a space%
%
}
\begin{document}
\maketitle
\begin{abstract}

We study dynamical transportation networks in a framework that includes extensions of the classical Cell Transmission Model to arbitrary network topologies. The dynamics are modeled as systems of ordinary differential equations describing the traffic flow among a finite number of cells interpreted as links of a directed network. Flows between contiguous cells, in particular at junctions, are determined by merging and splitting rules within constraints imposed by the cells' demand and supply functions as well as by the drivers' turning preferences, while inflows at on-ramps are modeled as exogenous and possibly time-varying. 

First, we analyze stability properties of dynamical transportation networks. We associate to the dynamics a state-dependent dual graph whose connectivity depends on the signs of the derivatives of the inter-cell flows with respect to the densities. Sufficient conditions for the stability of equilibria and periodic solutions are then provided in terms of the connectivity of such dual graph. 

Then, we consider synthesis of optimal control policies that use a combination of turning preferences, scaling of the demand functions through speed limits, and thresholding of supply functions, in order to optimize convex objectives. We first show that, in the general case, the optimal control synthesis problem can be cast as a convex optimization problem, and that the equilibrium of the controlled network is in free-flow. If the control policies are restricted to speed limits and thresholding of supply functions, then the resulting synthesis problem is still convex for networks where every node is either a merge or a diverge junction, and where the dynamics is monotone. These results apply both to the optimal selection of equilibria and periodic solutions, as well as to finite-horizon network trajectory optimization. 

Finally, we illustrate our findings through simulations on a road network inspired by the freeway system in southern Los Angeles. 
\end{abstract}
%\begin{keywords} 
%Transportation systems; Monotone systems; Dynamical flow network; Stability.
%\end{keywords}
%
%%%%%%%%%%%%%%%%%%%%%%%%%%%%%%%%%%%%%%%%%%%%%%%%%%%%%%%%%%%%%%%%%%%%%%%%%%%%%%%%

\section{Introduction}
\label{section:Introduction}

Transportation systems are vital for the well-functioning of the society and the economy. Increasing travel demand combined with limited growth in physical transportation infrastructure necessitates efficient management of transportation systems, by leveraging rapid advancements in sensing and information technologies. The true potential of these technologies can be best utilized within a dynamical framework. 

This paper deals with the stability analysis and control synthesis for road transportation networks. The dynamical models for such systems are primarily classified either as \emph{microscopic} or \emph{macroscopic}. 
Microscopic models describe the behavior of every single vehicle and, because of their complexity, their practical usage is typically limited to small scale systems, e.g., a collection of few intersections. On the other hand, macroscopic models, which are the subject of this paper, describe traffic flow at an aggregate level. The most celebrated macroscopic model is the Lighthill-Whitham and Richards (LWR) model \cite{LighthillTrafficPTRS55}, which describes traffic flow dynamics on a line by a partial differential equation. These models have also been extended to networks, e.g., see \cite{Garavello.Piccoli:06}, by careful consideration of the boundary conditions at the nodes. Space discretization schemes for numerical implementation of PDE models for traffic have also been developed. 
The Cell Transmission Model (CTM) \cite{Daganzo:94,Daganzo:95} is arguably the best known among these discretization schemes.

Inspired by spatially discrete models, including CTM, we model the layout of a transportation system by a directed graph. The links of this graph correspond to \emph{cells} whose direction is aligned to the one of the traffic flow, while its nodes correspond to interfaces between two cells or junctions, e.g., between an on-ramp and main line of a freeway. Every cell is endowed with a demand and a supply function, representing the maximum outflow and the maximum inflow on the cell given its density, respectively. 
We model traffic dynamics by a system of ordinary differential equations (ODEs) representing mass conservation on the cells. Inflows at on-ramps are modeled as exogenous and possibly time-varying.  On the other hand, flows between contiguous cells, in particular at junctions, are determined by merging and splitting rules within constraints imposed by the cells' demand and supply functions as well as by the drivers' turning preferences. In the free-flow regime, i.e., when the supply on every outgoing cell at a junction is less than the cumulative demand from the incoming cells, these rules are specified via turning preferences. For the non-free-flow or congested regime, there are several models in literature. Our model includes and extends several of these models, including FIFO~\cite{Daganzo:95}, non-FIFO~\cite{Karafyllis.Papageorgiou:TCNS14} and priority rules~\cite{Daganzo:95}. 

We prove a general result concerned with local stability of free-flow equilibria under any such rules. Furthermore, we show that the flow dynamics naturally induce a state-dependent dual graph whose connectivity depends on the signs of the derivatives of the inter-cell flows with respect to the densities. If the dynamics satisfy a certain monotonicity property, then, for constant and periodic inflows, we provide sufficient conditions for stability of equilibria and periodic solutions, respectively, in terms of connectivity of the dual graph. Our analysis relies on an $\ell_1$ contraction principle for monotone dynamical systems with mass conservation, which is similar to the corresponding property of the entropy solutions of scalar conservation laws, including LWR models, e.g., cf.~Kru\v{z}kov's Theorem \cite[Proposition 2.3.6]{Serre:99}. 

Then, we consider synthesis of optimal control policies that use a combination of turning preferences, scaling of the demand functions through speed limits, and thresholding of supply functions, in order to optimize convex objectives. We first show that, in the general case, the optimal control synthesis problem can be cast as a convex optimization problem, and that the equilibrium of the controlled network is in free-flow. If the control policies are restricted to speed limits and thresholding of supply functions, then the resulting synthesis problem is still convex for networks where every node is either a merge or a diverge junction, and where the dynamics is monotone. These results apply both to the optimal selection of equilibria and periodic solutions, as well as to finite-horizon network trajectory optimization. 

Part of this paper builds upon our previous work~\cite{ComoPartITAC13,ComoPartIITAC13,Como.Lovisari.ea:TCNS13} on dynamical flow networks, adapting the stability analysis to the standard setting for transportation networks. In particular, the models in \cite{ComoPartITAC13,ComoPartIITAC13,Como.Lovisari.ea:TCNS13} only include demand constraints, but they do not allow for either supply constraints or hard constraints induced by the drivers' turning preferences. 
On the other hand, the optimal control synthesis is a novel feature of the present work that was absent in \cite{ComoPartITAC13,ComoPartIITAC13,Como.Lovisari.ea:TCNS13}. 
This paper extends and unifies existing results on stability analysis for line topology in \cite{GomesTRC08, Pisarski.CAnudas-de-Wit:ACC12}, and for the network case in \cite{Karafyllis.Papageorgiou:TCNS14,CooganACC14}. 

While speed limits, e.g., see \cite{HegyiTRC05}, and metering, e.g., see \cite{GomesTRC08}, have been used as control mechanisms before, we also consider controlling turning preferences for congestion regimes. Although changes in turning preferences can occur naturally because drivers change their route choices when exposed to congestion, one could complement such changes favorably further by providing real-time traffic information to the drivers. 
Computational complexity of optimal equilibrium selection and control problems for transportation have been considered before, primarily in the context of receding horizon control, due to its impact on realistic implementation of such control schemes. Existing strategies are based on Mixed Integer Linear Program formulations \cite{LinTITS11, FrejoTRC:14} or on relaxation of the problem to obtain linear formulations \cite{MuralidharanACC12}. A recent approach relies on avoiding the discretization of the underlying LWR model via CTM and yields a reduction of the number of control variables \cite{LiTCNS:14}, but requires affine initial and boundary conditions. To the best of our knowledge, one of the novelties of this paper is to identify sufficient conditions for convexity of optimal control problems in the general network setting.

The major contributions of this paper are as follows. First, we propose a dynamical model for transportation networks that extends several well-known models to networks with arbitrary topologies. We introduce a state-dependent dual graph whose connectivity gives sufficient condition for stability of equilibria and periodic solutions when the dynamics is monotone. We postulate the problem of optimal control synthesis for transportation networks, and identify conditions under which it is a convex optimization problem. 
Finally, we illustrate our findings through simulations on a road network inspired by the freeway system in southern Los Angeles. 

The paper is organized as follows: in the rest of this section we provide some basic notations. In Section~\ref{section:model} we describe the dynamical transportation network model and illustrate how several well-known models fit into this framework. In Section~\ref{sec:stability}, we introduce the notion of dual graph and show its connection to stability of equilibria and periodic solutions for monotone dynamics. Section~\ref{sec:control} is devoted to the optimal control synthesis problem. In Section~\ref{section:numericalExample}, we present results from simulation studies, and Section~\ref{section:conclusions} draws the conclusions. 
For completeness, we briefly summarize key concepts from nonlinear dynamical systems relevant for this paper in Appendix~\ref{dynamicalSystems}, while in Appendix~\ref{technicalresults} we state a result from our previous work that is used a few times in the paper. The proof of Lemma~\ref{lemma:l1strictlydecreasing} is also given in Appendix~\ref{technicalresults}.

\subsection{Notation}

The symbols $\RR$ and $\RR_+ \defineas \{x\in\RR:\,x\ge0\}$ denote the set of real and nonnegative real numbers, respectively. For finite sets $\A$ and $\B$, $|\A|$ denotes the cardinality of $\A$, $\RR^{\A}$ (respectively, $\RR_+^{\A}$) the space of real-valued (nonnegative-real-valued) vectors whose components are indexed by elements of $\A$, and $\RR^{\A\times\B}$ the space of matrices whose real entries are indexed by pairs in $\A\times\B$. 

The transpose of a matrix $M \in \RR^{\A \times\B}$ is denoted by $M' \in\RR^{\B\times\A}$, while $\zerobf$ stands for the all-zero vector of suitable dimension.%and $\onesbf$ stand for an all-zero and all-one vectors of suitable dimension, respectively. 
 The natural partial ordering of $\RR^{\A}$ will be denoted by $x \leq y$ for two vectors $x,y\in\RR^{\A}$ such that $x_a\le y_a$ for all $a\in\A$.

\begin{figure}
	\centering
		\includegraphics[width=8cm]{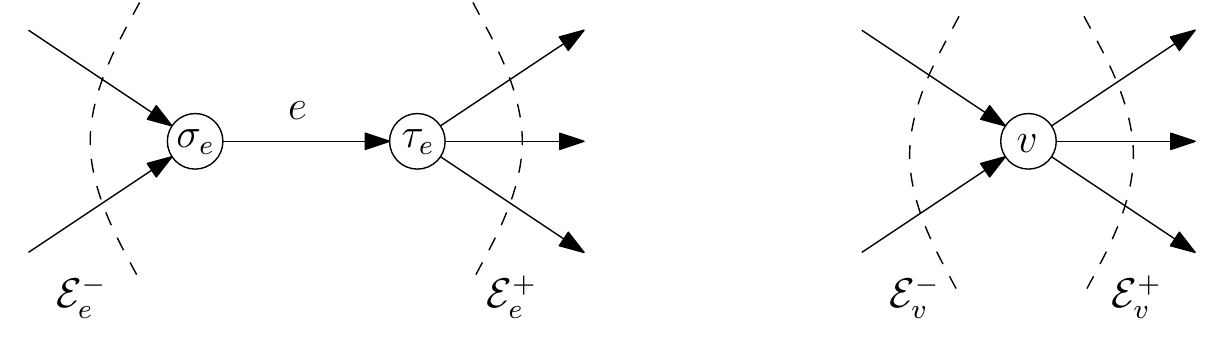}
	\caption{Illustration of some notations.}
	\label{fig:commonNotationRoundabout}
\end{figure}

A directed multi-graph is a couple $\G = (\V, \E)$, where $\V$ and $\E$ stand for the node set and the link set, respectively, and are both finite. They are endowed with two vectors: $\sigma,\tau\in\V^{ \E }$. For every $e\in\E$, $\sigma_e$ and $\tau_e$ stand for the tail and head nodes respectively of link $e$. We shall always assume that there are no self-loops, i.e., $\tau_e\ne\sigma_e$ for all $e \in \E$. On the other hand, we allow for parallel links. For a node $v \in \V$, let $\E_v^+ \defineas \{e:\sigma_e = v\}$ and $\E_v^- \defineas \{e:\tau_e = v\}$. For a link $e \in \E$, let $\E_e^+ \defineas \E_{\tau_e}^+$ be the set of links downstream to $e$ and $\E_e^- \defineas \E_{\sigma_e}^-$ be the set of links upstream to $e$. See Figure~\ref{fig:commonNotationRoundabout} for an illustration of some of these notations. Finally, for brevity in notation, unless explicitly specified otherwise, the range of indices under summation is understood to be the set $\E$.

%Given a graph $\G = (\V,\E)$, a matrix $J \in \RR^{\V\times \V}$ is a weighted sublaplacian of $\G$ if $J_{vu} \geq 0$ for any $v,u\in\V$, $v\neq u$, $J_{vu} > 0$ only if $(v,u)\in\E$, and $\sum_{u}J_{uv} \leq 0$ for all $v\in\V$. 

\section{Dynamical transportation networks}
\label{section:model}

We model dynamical transportation networks as systems of ODEs of the form 
	\be
	\label{eq:systfirst}
	\dot{\rho}_i= \fin_i(\rho,t) - \fout_i(\rho, t)\,,\qquad i\in\E\,,
	\ee
often written in compact form $\dot{\rho} = g(\rho, t)$. Here, $\rho_i\geq0$ stands for the density on a cell $i$ with $\E$ denoting the set of all cells, $\rho \in\RR^\E$ stands for the vector of all densities on the different cells, and $\fin_i(\rho,t)$ and $\fout_i(\rho,t)$ denote the inflow to and, respectively, the outflow from cell $i$. 

\begin{remark}
\label{rem:densityVSvolume}
In order to be consistent with existing literature, the presentation in Sections \ref{section:model} through \ref{sec:control} is in terms of densities on cells. Consequently, \eqref{eq:systfirst} is a mass conservation law if and only if, as we implicitly assume, all cells have the same length. Note that, this is without loss of generality, since the case of heterogeneous cells can be treated similarly by interpreting the variable $\rho_i(t)$ as the volume of vehicles, rather than density, on cell $i$ at time $t$. We adopt this approach in Section~\ref{section:numericalExample}.
\end{remark}

Cells are meant to represent portions of roads as well as on- and off-ramps. Following Daganzo's seminal work \cite{Daganzo:94,Daganzo:95}, the physical characteristics of each cell $i$ are captured by a possibly time-dependent demand function $d_i(\rho_i,t)$ and a supply function $s_i(\rho_i,t)$, representing upper bounds on the outflow from and, respectively, the inflow in cell $i$ at time $t$, when the current density on it is $\rho_i$, i.e., 
	\be\label{eq:supplydemandconstraints}
		\fin_i(\rho,t)\le s_i(\rho_i,t)\,,\qquad \fout_i(\rho,t)\le d_i(\rho_i,t)\,,\qquad i\in\E\,,\  \rho_i\ge0\,,\ t\ge0\,.
	\ee
Throughout, we assume that, on every cell $i\in\E$, the demand function $d_i(\rho_i,t)$ is strictly increasing in $\rho_i$, and the supply function $s_i(\rho_i,t)$ is non-increasing in $\rho_i$, for all $t\geq0$.
\begin{figure}
	\centering
		\includegraphics[width=5cm]{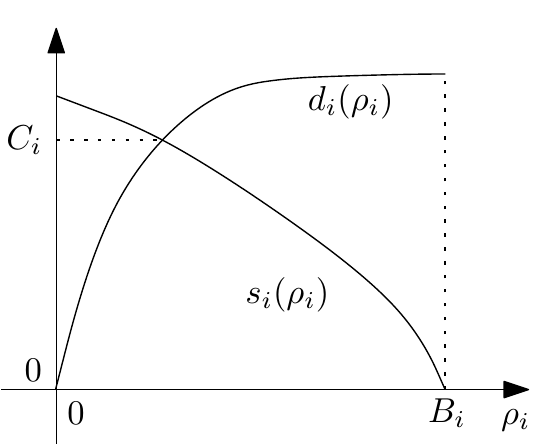}
	\caption{Illustration of demand and supply functions, and of flow capacity, on a cell $i\in\E$.}
	\label{fig:SupplyAndDemandExample}
\end{figure}

The time-invariant \emph{jam density} on cell $i$, namely, the maximum density allowed on $i$, is defined as\footnote{Since the jam density does not depend on time, $t$ here is arbitrary.}
	$$
		\rhomax_i := \sup\{\rho\geq0:\,s_i(\rho,t) > 0\}\,.
	$$ 
Observe that \eqref{eq:supplydemandconstraints} implies that the set $\S := \prod_{i\in\E}[0,\rhomax_i]$ is invariant under the dynamics \eqref{eq:systfirst}, and, in accordance with the physics of the system, it is assumed throughout the paper that the state of the network belongs to $\S$ at any time. 

The function $q_i(\rho_i,t)=\min\{d_i(\rho_i,t),s_i(\rho_i,t)\}$ is often interpreted as the \emph{fundamental diagram} and $C_i(t):=\max_{\rho_i\ge0}q_i(\rho_i,t)$ as the \emph{flow capacity} of cell $i$. See Figure~\ref{fig:SupplyAndDemandExample} for an illustration. A particularly relevant role in the applications has been played by the special case of linear demand functions 
	\begin{equation}
		\label{eq:linearDemand}
		d_i(\rho_i,t) = v_i(t)\rho_i\,,
	\end{equation}
where $v_i(t)$ is  the \emph{free-flow speed} and saturated affine supply functions 
	\be
		\label{eq:affinesupply}
		s_i(\rho_i,t) = \min\{S_i(t),w_i(t)(B_i-\rho_i)\}\,,
	\ee
where $w_i(t)$ is the wave-speed and $S_i(t)$ a supply saturation level. 

In some cases, the free-flow speed $v_i(t)$ as well as the wave speed $w_i(t)$ and/or the saturation level of the supply function $S_i(t)$ can be considered as control parameters that can be actuated, e.g., through variable speed limits and, respectively, supply metering. We refer the reader to Section \ref{sec:control} for a more in depth  discussion on this, along with design principles.  In other cases, time variability of such parameters may be thought of as due to uncontrolled perturbations of the system due to, e.g., increased inflows in some parts of the network, changing weather conditions, accidents, and so on. 

Cells are conveniently identified with the links of a directed graph $\G=(\V,\E)$ whose nodes $v\in\V$  represent junctions between consecutive cells. Conventionally, we include in the set $\V$ an extra node $w$ meant to represent the external world, which is both the source of the flow entering the network and the sink of flow exiting it. 
In particular, cells $i\in\E$ such that $\sigma_i = w$ represent on-ramps, while cells $j\in\E$ such that $\tau_j = w$ represent off-ramps. We denote the sets of on- and off-ramps by $\R \subseteq\E$ and $\R^o\subseteq \E$, respectively. To avoid trivial cases, we will always assume  that 
for every cell $i$ there exists at least one directed path from $i$ to an off-ramp $j\in\R^o$. We will also use the convention that on all on-ramps $i\in\R$ the supply and jam density are infinite, i.e., $s_i(\,\cdot\,,\,\cdot\,)\equiv+\infty$ and $B_i=+\infty$. In contrast, we will assume that the jam density on every other cell is finite, i.e., $B_i<+\infty$ for all $i\in\E\setminus\R$.

The network topology described by the directed graph $\G$ induces natural constraints on the dynamics \eqref{eq:systfirst}: flow is possible only between consecutive cells, i.e., from a cell $i$ to a cell $j$ such that $\tau_i=\sigma_j$. 
Specifically, we model the flow $f_{ij}(\rho,t)$ from a cell $i\in\E\setminus\R^o$ to a cell $j\in\E \setminus\R$ as a continuous function of $(\rho,t)$, Lipschitz-continuous in $\rho$ uniformly continuous with respect to $t$, and let 
the inflow in and outflow from a cell $i$, respectively, satisfy the following equations
	\begin{align}
		\nonumber		
		\fin_i(\rho,t) &= 
			\begin{cases}
				\lambda_i(t), &	i\in\R	\\[5pt]
				\sum_{j\in\E_i^-}f_{ji}(\rho,t), & i\in\E\setminus\R
			\end{cases}
			\\
			\label{eq:finfout}
			\\
		\nonumber\fout_i(\rho,t) &= 
			\begin{cases}
				d_i(\rho_i,t), &	i\in\R	^o\\[5pt]
				\sum_{j\in\E_i^+}f_{ij}(\rho,t), & i\in\E\setminus\R^o
			\end{cases}		
	\end{align}
Equation \eqref{eq:finfout} states that 
	\begin{itemize}
		\item the inflow $\fin_i(\rho,t) = \lambda_i(t)$ into an on-ramp $i\in\R$ is independent of the state of the network. Here, $\lambda_i(t)\ge0$ has to be interpreted as an input to the network, modeling the possibly time-varying rate at which vehicles enter the on-ramp $i$ from the external world. Throughout, we shall denote the vector of inflows at the different on-ramps by $\lambda(t)\in\RR^{\R}$, $\lambda(t)\geq0$, and assume that it is a piecewise continuous function of time for all $t\ge0$. 
		\item the outflow $\fin_i(\rho,t) = d_i(\rho,t)$ from an off-ramp $i\in\R^o$ equals the demand on cell $i$;
		\item besides the two cases above, inflow and outflow of a cell $i$ coincide with the sum of all incoming flows from cells  $j\in\E^-_i$ immediately upstream and, respectively, outgoing flows to the cells $j\in\E^+_i$ immediately downstream, of cell $i$.
	\end{itemize}

Standard results on ODEs guarantee, under the given continuity assumptions on the flow functions $f_{ij}$ and on the input vector $\lambda(t)$, the existence and uniqueness of a solution to \eqref{eq:systfirst} for every initial condition $\rho(0)\in \S$. 
%\ksmargin{what does this mean ?}Observe that, by convention, we allow the initial condition on the on-ramps $i\in\R$ to be infinite, in that case the density $\rho_i$ on that on-ramp is intended to remain infinite through time and the corresponding demand $d_i(\rho_i)$ to remain constant. 

The actual form of the flow functions $f_{ij}(\rho,t)$ depends on a possibly time-dependent \emph{turning preference matrix} $R(t)\in\RR^{\E\times\E}$ satisfying, for all $t\ge0$, and $i,j\in\E$, \footnote{Recall that the range of summation is $\E$, unless specified otherwise.}
	$$
		R_{ij}(t)\ge0\,,
		\qquad
		\sum_kR_{ik}(t)=1\,,
		\qquad 
		\tau_i\ne\sigma_j\ \Rightarrow R_{ij}(t)=0	
		\,.
	$$

In particular, the flow functions have to satisfy the following natural constraints 
	\be	
		\label{eq:constraintFetoj}
		 0 \leq  f_{ij}(\rho,t)\le R_{ij}(t)d_i(\rho_i,t)\,,\qquad \forall i,j\in\E
	\ee
	\be 	
		\label{eq:constraintFin}
		\sum_kf_{ki}(\rho,t)\le s_i(\rho_i,t)\,, \forall i\in\E
	\ee
and, for all $v\in\V$,
	\be
		\label{eq:constraintFetojEquality} 
		\sum_k R_{kj}(t)d_k(\rho_k,t)\le s_j(\rho_j,t),\,\quad \forall j\in\E_v^+\\
	\quad\Longrightarrow\quad f_{ij}(\rho,t)= R_{ij}(t)d_i(\rho_i,t), \quad\forall i\in\E_v^-\,.
	\ee
The constraints \eqref{eq:constraintFetoj} and \eqref{eq:constraintFin} ensure that the flow from a cell $i$ to another cell $j$ never exceeds the part of the demand on $i$ whose preference is to turn to $j$ and, respectively,  that the total inflow in cell $i$ never exceeds its supply. In particular, flow among non-consecutive cells is zero. On the other hand, \eqref{eq:constraintFetojEquality} states that, when there is enough supply on all cells downstream of node $v$, the flow from a cell $i\in\E_v^-$ to a cell $j\in\E_v^+$ coincides with the part of the demand on $i$ whose preference is to turn to $j$. Hence, $R_{ij}(t) \geq 0$ represents the fraction of the demand on cell $i$ that flows to cell $j$ when there is enough supply on all cells in $\E^+_{\tau_i}$. In particular, we say that cell $i$ is in \emph{free-flow} when $f_{ij}(\rho,t)= R_{ij}(t)d_i(\rho_i,t)$ for all $j\in\E_i^+$.

Observe that the constraints \eqref{eq:constraintFetoj}--\eqref{eq:constraintFetojEquality} do not uniquely characterize the value of the flow functions $f_{ij}(\rho,t)$ when 
	\be
		\label{eq:supplyviolated}
		\sum_k R_{kj}(t)d_k(\rho_k,t)>s_j(\rho_j,t)\,.
	\ee 
In this case, \eqref{eq:constraintFin} only ensures that the total inflow in $j$ does not exceed the supply of cell $j$, while specific allocation rules are needed to determine how much of such supply is allocated to the flows from the different cells $i\in\E^-_j$. 
We now present a few different examples of flow functions. 

\begin{example}\label{example:line}
Consider a network with line topology, consisting of $N$ consecutive cells. The first and the last cells are an on-ramp and an off-ramp, respectively. The dynamics is given by
	\begin{align*}
		\dot{\rho}_1(t) & = \lambda(t) - f_{1,2}(\rho_1(t), \rho_2(t), t)	\\
		\dot{\rho}_i(t) & = f_{i-1,i}(\rho_{i-1}(t), \rho_i(t),t) - f_{i,i+1}(\rho_i(t), \rho_{i+1}(t),t), \quad \forall i = 2,\dots, N-1\\
		\dot{\rho}_N(t) & = f_{N-1,N}(\rho_{N-1}(t), \rho_N(t),t) - d_N(\rho_N(t))\\
	\end{align*}
where $\lambda(t)$ is the inflow at the on-ramp. Under the Cell Transmission Model (CTM)~\cite{Daganzo:94}, the flow from one cell to the next is given by
	$$
		f_{i,i+1}(\rho_i, \rho_{i+1}, t) = \min\{d_i(\rho_i, t), s_{i+1}(\rho_{i+1}, t)\}\,.
	$$
Since in this case $R_{i,i+1} = 1$ for $i= 1,\dots, N-1$ and $R_{ij} = 0$ otherwise, this policy satisfies \eqref{eq:constraintFetoj}, \eqref{eq:constraintFin} and \eqref{eq:constraintFetojEquality}.
\end{example}

\begin{example}\label{example:policiesFIFO}
One possible extension of the CTM to the network setting is the First In First Out (FIFO) policy \cite{Daganzo:95}\cite{CooganACC14}. Given a node $v \in\V$, $i\in\E_v^-$ and $j\in\E_v^+$, under the FIFO policy,
	$$
		f_{i j}(\rho) = \kappa_v^{\mathrm{F}}(\rho,t) R_{ij}(t)d_i(\rho_i,t)
	$$
where $\kappa_v^{\mathrm{F}}(\rho,t) \in [0,1]$ is the maximum value such that $\kappa_v^{\mathrm{F}}(\rho,t)\sum_{i\in\E_v^-}R_{ik}(t)d_i(\rho_i,t)\leq s_k(\rho_k,t)$ for all $k\in\E_v^+$. In words, the FIFO policy corresponds to a situation where drivers are rigid about their route choice, and all the cells consist of a single lane. Therefore, increase in congestion on any outgoing cell at a node potentially decreases inflow to other outgoing cells. The FIFO policy gives maximum flow between incoming and outgoing cells subject to such constraints. It is straightforward to see that such policy satisfies \eqref{eq:constraintFetoj}, \eqref{eq:constraintFin} and \eqref{eq:constraintFetojEquality}. 
\end{example}

\begin{example}
\label{example:policiesNonFIFO}
A second possible extension of the CTM to the network setting is the following non-FIFO proportional allocation rule, which appears in \cite{Karafyllis.Papageorgiou:TCNS14}:
	\begin{equation}
		\label{eq:nonFIFO}
		f_{ij}(\rho,t)
		\!=\!
		\kappa_j^{\mathrm{NF}}(\rho,t)		
		R_{ij}(t)d_i(\rho_i,t)
	\end{equation}		
where
	$$
	\kappa_j^{\mathrm{NF}}(\rho,t)	=
		\min\!\left\{1, \frac{s_j(\rho_j,t)}{\sum_{k}R_{kj}(t)d_k(\rho_k,t)}\right\}\,.
	$$
Notice that $\kappa_v^{\mathrm{F}}(\rho) = \min_{j\in\E_v^+}\kappa_j^{\mathrm{NF}}(\rho)$. Unlike the FIFO policy in Example~\ref{example:policiesFIFO}, under the non-FIFO policy, inflows to the cells outgoing from a node are independent, possibly because of a combination of having multiple lanes and adaptive route choice of drivers. This policy also satisfies \eqref{eq:constraintFetoj}, \eqref{eq:constraintFin} and \eqref{eq:constraintFetojEquality}.
\end{example}

\begin{example}\label{example:mixture}
The FIFO and non-FIFO policies from Examples~\ref{example:policiesFIFO} and \ref{example:policiesNonFIFO}, respectively, possibly represent two extremes of traffic splitting at a congested intersection, and that practical settings correspond to somewhere in between. Accordingly, we propose the following \emph{mixture} model: 
	\begin{equation}
		\label{eq:mixture}
		f_{ij}(\rho,t)
		\!=\!
		\kappa_j^{\mathrm{NF}}(\rho)		
		R_{ij}(t)d_i(\rho_i,t)
	\end{equation}		
for $i\in\E_v^-$ and $j\in\E_v^+$, and 
	\begin{equation}
	\label{eq:mixture-coefficient}
		\kappa_j^{\mathrm{M}}(\rho)	 = \theta\kappa_v^{\mathrm{F}}(\rho)	+ (1-\theta)\kappa_j^{\mathrm{NF}}(\rho)
	\end{equation}
where $\theta \in [0,1]$ is the mixture parameter. For any $\theta$, this policy satisfies \eqref{eq:constraintFetoj}, \eqref{eq:constraintFin} and \eqref{eq:constraintFetojEquality}.
\end{example}

\begin{example}
\label{example:policiesDaganzo}
Consider the setting where every node is either a merge node, i.e., having a single outgoing cell, or a diverge node, i.e., having a single incoming cell. Further, let every merge node have at most two incoming cells, e.g., when the node corresponds to a junction of the mainline of a freeway and an on-ramp. Consider one such merge node with $j$ as the unique outgoing cell, and $i$ and $k$ as the two incoming cells. Let $f_{ij}(\rho,t)$ be given by the following priority rule proposed in \cite{Daganzo:95}:
	\begin{align*}
	\label{eq:policyDaganzo}
			&f_{i j}(\rho,t) = d_i(\rho_i,t), \textrm{ if }d_i(\rho_i,t)+d_k(\rho_k,t) \leq s_j(\rho_j,t)
	\end{align*}
and
	\begin{align*}
		&f_{ij}(\rho,t)=		
		\textrm{mid}\{d_i(\rho_i,t), s_j(\rho_j,t) - d_k(\rho_k,t), p_is_j(\rho_j,t)\},\\
		&	\qquad\qquad\qquad\quad\textrm{ if }d_i(\rho_i,t)+d_k(\rho_k,t) > s_j(\rho_k,t)
	\end{align*}
and symmetrically for $f_{k j}(\rho,t)$. Here $\textrm{mid}\{a,b,c\}$ denotes the middle value among $a$, $b$, and $c$, and $p_i$, $p_k$ are nonnegative and such that $p_i+p_k = 1$. The higher $p_i/p_k$, the more priority is given to $i$ with respect to $k$ under congestion, i.e., when $d_i(\rho_i,t)+d_k(\rho_k,t) > s_j(\rho_k,t)$. Finally, for diverge nodes, consider one of the rules described in the previous examples. Such a policy satisfies \eqref{eq:constraintFetoj}, \eqref{eq:constraintFin} and \eqref{eq:constraintFetojEquality}. 
\end{example}

\section{Stability analysis}
\label{sec:stability}

This section is devoted to the stability analysis of dynamical transportation networks. We refer to Appendix~\ref{dynamicalSystems} for a primer on 
key concepts from nonlinear dynamical systems that are relevant for this section. 
We start with the following simple result, which, e.g., appeared in  \cite{CooganACC14}. It shows that when \emph{any} transportation network admits an equilibrium that is in free-flow, then such an equilibrium is locally asymptotically stable, i.e., it attracts all sufficiently close points. We give a proof for completeness and because it will also give us some insights on the properties of stable equilibria.

\begin{proposition}
\label{prop:freeflowGeneral}
Let \eqref{eq:systfirst} be a dynamical transportation network with time-invariant demand $d_i(\rho_i)$ and supply function $s_i(\rho_i)$ on every cell $i\in\E$, and constant turning preference matrix $R$ and inflow vector $\lambda\in\RR^{\R}_+$. Extend $\lambda$ to a non-negative vector in $\RR^\E$%$\RR_+^{\E}$ 
by putting $\lambda_i=0$ for all $i\in\E\setminus\R$, and let $C\in\RR^\E$ be the vector of cells' flow capacity. Then, if  
	\begin{equation}
		\label{eq:conditionFreeflow}
		f^* := (I - R^T)^{-1}\lambda < C\,,	
	\end{equation}
then $\rho^*\in\S$ defined by 
	$$
		\rho^*_i = d_i^{-1}(f^*_i) \,, \qquad i\in\E
	$$
is a free-flow locally asymptotically stable equilibrium. 
\end{proposition}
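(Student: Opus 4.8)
\section*{Proof proposal}

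The plan is to first identify $\rho^*$ as a genuine free-flow equilibrium, and then to exploit the observation that near such an equilibrium the dynamics become \emph{insensitive} to the congested allocation rule left unspecified by \eqref{eq:supplyviolated}, collapsing onto a single smooth monotone system whose stability is a linear-algebra fact. First I would use the standing connectivity hypothesis---every cell has a directed path to an off-ramp, where flow leaves the network---to argue that the cell-to-cell routing matrix $R$ (whose rows on $\R^o$ vanish, since off-ramp outflow goes to the external world rather than being re-routed) has spectral radius strictly less than one. Consequently $I-R^{T}$ is a nonsingular $M$-matrix, $(I-R^{T})^{-1}=\sum_{k\ge0}(R^{T})^k\ge0$, and $f^*=(I-R^{T})^{-1}\lambda\ge0$ is well defined; since each $d_i$ is strictly increasing and $f^*_i<C_i$, the density $\rho^*_i=d_i^{-1}(f^*_i)$ is also well defined and lies in $\S$.

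Next I would verify that $\rho^*$ is \emph{strictly} in free-flow. Because $C_i=\max_{\rho_i}\min\{d_i(\rho_i),s_i(\rho_i)\}$ with $d_i$ increasing and $s_i$ non-increasing, the strict inequality $f^*_i<C_i$ places $\rho^*_i=d_i^{-1}(f^*_i)$ strictly below the critical density, so $s_i(\rho^*_i)\ge C_i>f^*_i$: the supply is strictly slack. For each node $v$ and each $j\in\E_v^+$ one then has $\sum_k R_{kj}d_k(\rho^*_k)=(R^{T}f^*)_j=f^*_j-\lambda_j\le f^*_j<s_j(\rho^*_j)$, so by \eqref{eq:constraintFetojEquality} every flow equals $f_{ij}(\rho^*)=R_{ij}d_i(\rho^*_i)$. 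The resulting free-flow mass balance reads $(I-R^{T})d(\rho^*)=\lambda$, i.e. $d(\rho^*)=f^*$, which holds by construction, confirming that $\rho^*$ is an equilibrium.

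The crux is local asymptotic stability. The key step is that the strict slackness $s_j(\rho^*_j)>f^*_j$ persists on a neighborhood $U$ of $\rho^*$ by continuity, so that on $U$ the premise of \eqref{eq:constraintFetojEquality} holds at every node and, \emph{regardless of which congested allocation rule the network employs}, the dynamics coincide on $U$ with the free-flow system $\dot\rho_i=\lambda_i+\sum_j R_{ji}d_j(\rho_j)-d_i(\rho_i)$. Writing $\delta_i:=d_i(\rho_i)-f^*_i$, whose sign matches that of $\rho_i-\rho^*_i$ because $d_i$ is strictly increasing, this reduces to $\dot\rho_i=\sum_j R_{ji}\delta_j-\delta_i$. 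I would then take $V(\rho)=\sum_i\xi_i|\rho_i-\rho^*_i|$ with $\xi=(I-R)^{-1}\mathbf{1}\ge\mathbf{1}>0$, which exists precisely because the spectral radius of $R$ is below one and satisfies $(I-R)\xi=\mathbf{1}>0$. A short upper-Dini-derivative computation gives $D^+V\le\sum_j|\delta_j|\,[(R-I)\xi]_j=-\sum_j|\delta_j|<0$ for $\rho\neq\rho^*$, and since $V$ is positive definite with sublevel sets contained in $U$ for $\rho$ near $\rho^*$, local asymptotic stability follows. (If one assumes $d_i\in C^1$, the same conclusion is quicker by linearization: the Jacobian of the free-flow system at $\rho^*$ is $(R^{T}-I)D$ with $D=\mathrm{diag}(d_i'(\rho^*_i))>0$, and $(I-R^{T})D$ is a nonsingular $M$-matrix, hence $(R^{T}-I)D$ is Hurwitz.)

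The main obstacle is not the linear algebra but the reduction itself: one must argue rigorously that in a neighborhood of a strictly-free-flow equilibrium the flow functions are completely determined by \eqref{eq:constraintFetojEquality} and therefore agree with the smooth free-flow law, so that ``any transportation network'' behaves identically there. Making this precise needs the \emph{strict} inequality $s_j(\rho^*_j)>f^*_j$ (not merely $\ge$) together with continuity to produce an invariant free-flow neighborhood, plus the check that the Lyapunov sublevel sets used for invariance stay inside $U$. A secondary technical point is that $d_i$ is assumed only strictly increasing and Lipschitz, so I would favor the Dini-derivative Lyapunov argument, which avoids presupposing differentiability of the demand functions.
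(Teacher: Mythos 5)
Your proposal is correct, and its first half (establishing that $\rho^*$ is a free-flow equilibrium via strict slackness of the supplies) is essentially the paper's argument; in fact you are slightly more careful, since you make explicit both the invertibility of $I-R^{T}$ (via the sub-stochasticity of the effective routing matrix plus the path-to-off-ramp assumption, which the paper uses only implicitly) and the term $\lambda_j$ in the bound $\sum_k R_{kj}d_k(\rho^*_k)=f^*_j-\lambda_j\le f^*_j<s_j(\rho^*_j)$. Where you genuinely diverge is the stability step. The paper linearizes: it computes the Jacobian $J$ at $\rho^*$ (using the same observation you make, that near $\rho^*$ the flows are forced to equal $R_{ij}d_i(\rho_i)$ by \eqref{eq:constraintFetojEquality}), notes that $J$ is Metzler with non-positive column sums, strictly negative on off-ramp columns, and invokes a lemma from prior work to conclude $J$ is Hurwitz --- this is exactly the M-matrix remark you put in parentheses. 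Your primary route instead builds the weighted $\ell_1$ Lyapunov function $V(\rho)=\sum_i\xi_i|\rho_i-\rho^*_i|$ with $\xi=(I-R)^{-1}\mathbf{1}$ and bounds the Dini derivative. This buys two things: it does not presuppose differentiability of the demand functions at $\rho^*$ (the paper's Jacobian computation implicitly needs $d_i'(\rho^*_i)$ to exist, whereas the standing assumptions only give strictly increasing, Lipschitz demands), and it yields an explicit decrease rate $-\sum_j|\delta_j|$ on the whole free-flow neighborhood rather than only spectral information at the point. The cost is the extra bookkeeping you correctly flag: one must check that the sublevel sets of $V$ used for invariance sit inside the neighborhood $U$ on which the strict supply slackness, and hence the free-flow reduction, persists.
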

\begin{proof}
First we prove that $\rho^*$ is an equilibrium for the system. Indeed, $f^*_j < \fmax_j$ implies, by definition of flow capacities and by the properties of demand and supply functions, that $d_j(\rho_j^*) < s_j(\rho_j^*)$. Then
	\begin{align*}
		\sum_{e\in\E_j^-}R_{ej}d_e(\rho_e^*) 
			& 	= \sum_{e\in\E_j^-}R_{ej}f^*_e 
				= f^*_j = d_j(\rho_j^*) < s_j(\rho_j^*)
	\end{align*}
where the second equality follows by \eqref{eq:conditionFreeflow}. By \eqref{eq:constraintFetojEquality}, we obtain $f_{ej}(\rho^*) = R_{ej}d_e(\rho_e^*) = R_{ej}f^*_e$ for all $(e,j)$. Together with $R^Tf^* + \lambda = 0$, this establishes that $\rho^*$ is a free-flow equilibrium.

We now prove that $\rho^*$ is locally asymptotically stable. In fact, let $J = \nabla g(\rho)|_{\rho = \rho^*}$ be the Jacobian of the system computed at $\rho^*$. Since $f_{ej}(\rho^*) = R_{ej}d_e(\rho_e^*)$ for all $(e,j)$, then for all $(e,j)$ with $e\neq j$ it holds $J_{ej} = R_{ej}\frac{\partial d_e(\rho_e)}{\partial \rho_e}|_{\rho_e=\rho_e^*}$ (this being zero if $\tau_e\neq\sigma_j$), while $J_{ee} = -\frac{\partial d_e(\rho_e)}{\partial \rho_e}|_{\rho_e=\rho_e^*}$ for all $e$. This implies that $J$ is a Metzler matrix, whose columns all have non positive sum, and in particular whose columns corresponding to off-ramps have strictly negative sum. Moreover, by assumption in the original graph for every cell there is at least one directed path to at least one off-ramp. Under these assumptions, it is well known (see, e.g., \cite{Lovisari.Como.ea:CDC14}[Lemma 7]) that $J$ is a stable matrix. Therefore, $\rho^*$ is a locally stable equilibrium.
\end{proof}

For FIFO policies such as those presented in Example~\ref{example:policiesFIFO} it can be shown \cite{CooganACC14} that the basin of attraction of $\rho^*$ contains any $\rho\in\S$ such that $0\leq \rho \leq \rho^*$. However, in general, the free-flow equilibrium of a transportation network with FIFO policy is not globally asymptotically stable. This is shown in following example, and also illustrated in our simulation studies in Section~\ref{section:numericalExample}.

\begin{figure}
\begin{center}
\includegraphics[scale=0.75]{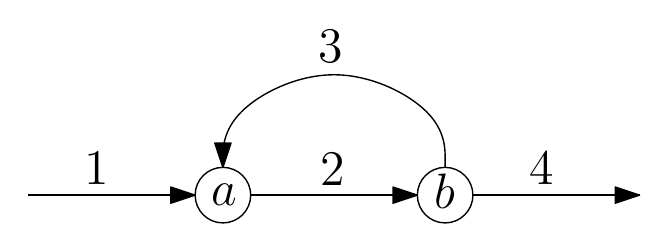}
\end{center}
\caption{The network considered in Example~\ref{example:counterexampleGASFIFO}}
\label{figure:CounterexampleGASFIFO}
\end{figure}

\begin{example}
\label{example:counterexampleGASFIFO}
Consider a the network with four cells, one on-ramp and one off-ramp shown in Figure~\ref{figure:CounterexampleGASFIFO}. Let the dynamics be driven by a FIFO policy as per Example~\ref{example:policiesFIFO}, so that
	{\small
	\begin{align*}
		\dot{\rho}_1 & = g_1(\rho) = \lambda - \kappa_a^F(\rho)d_1(\rho_1)	&
		\dot{\rho}_2 & = g_2(\rho) = \kappa_a^F(\rho)(d_1(\rho_1)+d_3(\rho_3)) - \kappa_b^F(\rho)d_2(\rho_2)	\\
		\dot{\rho}_3 & = g_3(\rho) = \kappa_b^F(\rho)R_{23}d_2(\rho_2) - \kappa_a^F(\rho)d_3(\rho_3)&
		\dot{\rho}_4 & = g_4(\rho) = \kappa_b^F(\rho)R_{24}d_2(\rho_2) - d_4(\rho_4)
	\end{align*}
	}
where 
	$$
		\kappa_a^F(\rho) = \min\left\{1, \frac{s_2(\rho_2)}{d_1(\rho_1)+d_3(\rho_3)}\right\}\,,\quad
		\kappa_b^F(\rho) = \min\left\{1, \frac{s_3(\rho_3)}{R_{23}d_2(\rho_2)}, \frac{s_4(\rho_4)}{R_{24}d_2(\rho_2)}\right\}\,.
	$$
	
Let $d_e(\rho_e) = \rho_e$ for all $e$ and $R_{23} = R_{24} = 0.5$. The candidate free-flow equilibrium can be easily found to be $\rho_1^* = \rho_3^* = \rho_4^* = \lambda$ and $\rho_2^* = 2\lambda$. Let the supply functions be so that $\rho^*$ is indeed an equilibrium in the free flow. By Proposition~\ref{prop:freeflowGeneral}, $\rho^*$ is locally asymptotically stable. In order to see that it is not globally asymptotically stable, consider the trajectory $\hat{\rho}_1(t) = \lambda t + \rho^o$, $\rho^o\geq0$ arbitrary, and $\hat{\rho}_2(t) = \rhomax_2$, $\hat{\rho}_3(t) = \rhomax_3$, $\hat{\rho}_4(t) = 0$ for all $t\geq0$. This is a 
feasible trajectory for the network under consideration because $s_2(\hat{\rho}_2(t)) = s_3(\hat{\rho}_3(t)) = 0$, and hence $\kappa_a^F(\hat{\rho}(t)) = \kappa_b^F(\hat{\rho}(t)) = 0$, for all $t\geq0$. Therefore,
	$$
		g_1(\hat{\rho}(t)) = \lambda,\quad	
		g_2(\hat{\rho}(t)) = 0,\quad
		g_3(\hat{\rho}(t)) = 0,\quad
		g_4(\hat{\rho}(t)) = 0,\qquad \forall t\geq0
	$$
proving that $\{\hat{\rho}(t):\,t\geq0\}$ is a trajectory of the system. Clearly, since the densities on cells $2$, $3$ and $4$ do not change, while the density on the on-ramp $1$ grows unbounded, $\hat{\rho}(t)$ does not converge to $\rho^*$, which is thus not globally asymptotically stable.
\end{example}

Example~\ref{example:counterexampleGASFIFO} shows that one cannot prove global asymptotic stability of free-flow equilibria for general transportation networks. Even more so, for the FIFO policies in Example~\ref{example:policiesFIFO}, condition \eqref{eq:conditionFreeflow} (with non strict inequality) is a necessary and sufficient condition for the network to admit an equilibrium. That is, under FIFO policies, if \eqref{eq:conditionFreeflow} does not hold true with non-strict inequality, then no equilibrium can exist and, moreover, the trajectory of the system grows unbounded for any initial condition. 

However, this feature does not extend to \emph{all} transportation networks. Indeed, in the rest of this section, we shall consider a special class of dynamical transportation networks, to be referred to as \emph{monotone}, characterized by an additional differential constraint on the flow functions. Monotone dynamical transportation networks include those with flow functions as in Examples~\ref{example:line} and \ref{example:policiesNonFIFO}, but not the FIFO diverge rule of Example~\ref{example:policiesFIFO} and the mixture model of Example \ref{example:mixture}. 

We will show that, under such additional differential constraint, \eqref{eq:systfirst} is a monotone dynamical system \cite{HirschJMA85}, i.e., one preserving the standard partial ordering in $\S$. Then, we will prove that such monotonicity property combined with the conservation of mass implies a fundamental \emph{incremental stability} property: 	the $l_1$-distance between two solutions of \eqref{eq:systfirst} starting from different initial conditions can never increase. We will explore several consequences of such monotonicity and incremental stability properties including, local stability of free-flow and non free-flow equilibria and periodic solutions and global asymptotic stability of equilibria and periodic solutions, when a certain state-dependent dual graph, which depends on the sign of the derivatives of the flow between contiguous cells, is connected.
This will substantially extend the available results on stability analysis for dynamical transportation networks.
\medskip

We consider flow functions that satisfy the following additional constraint 
				\begin{equation}
					\label{eq:constraintMonotonicity}
						\frac{\partial}{\partial \rho_j} \sum_{k}f_{ki}(\rho,t) \geq 0\,,
						\qquad
						\frac{\partial}{\partial \rho_j} \sum_{k}f_{ik}(\rho,t)\leq 0\,,\qquad\forall j\ne i\in\E\,, 
				\end{equation}
for all $t\ge0$ and almost every $\rho\geq0$\footnote{Lipschitz continuity of the flow functions implies their differentiability almost everywhere by Rademacher's theorem. }.

Equation \eqref{eq:constraintMonotonicity} states that the total inflow in (respectively, outflow from) a cell $i$ does not increase (does not decrease) if the density is increased in any other cell $j\ne i$ and kept constant on cell $i$. We will refer to dynamical systems of the form \eqref{eq:systfirst} satisfying \eqref{eq:finfout}--\eqref{eq:constraintFetojEquality} and \eqref{eq:constraintMonotonicity} as \emph{monotone dynamical transportation networks}. 

\textbf{Examples \ref{example:line}, 	\ref{example:policiesNonFIFO} - contd.}\emph{
The policies presented in Examples~\ref{example:line} and \ref{example:policiesNonFIFO}, which we discuss together as the former is a special case of the latter, are monotone. To see this, first observe that 
	$$
		\sum_kf_{ki}(\rho,t)=\min\left\{s_i(\rho_i,t),\sum_{k}R_{ki}(t)d_k(\rho_k,t)\right\}
	$$			
Then, notice that, for all $j\ne i$, 
	$$
		\frac{\partial}{\partial \rho_j} s_i(\rho_i,t)=0\,,\qquad  \frac{\partial}{\partial \rho_j}\sum_{k}R_{ki}(t)d_k(\rho_k,t)\ge0\,,
	$$
where the second inequality follows from the monotonicity assumption on the demand functions. Hence, the leftmost inequality in \eqref{eq:constraintMonotonicity} is satisfied. Similarly, for every cell $k$, 
	$$
		\frac{\partial}{\partial \rho_j}
		\frac{s_k(\rho_k,t)R_{ik}(t)}{\sum_{l}R_{lk}(t)d_l(\rho_l,t)}\le0
	$$
so that, for all $j\ne i$
	$$
		\frac{\partial}{\partial \rho_j}\sum_kf_{ik}(\rho,t)=
		d_{i}(\rho_i,t)\frac{\partial}{\partial \rho_j}\sum_k
		\min\left\{\frac{s_k(\rho_k,t)R_{ik}(t)}{\sum_{l}R_{lk}(t)d_l(\rho_l,t)},R_{ik}(t)\right\}\le0\,.
	$$
Hence, also the rightmost inequality in \eqref{eq:constraintMonotonicity} is satisfied. Thus the flow functions in the two Examples give rise to a monotone dynamical transportation network. 
}

\textbf{Example \ref{example:policiesFIFO} - contd.}\emph{
In contrast, the policies proposed in Examples~\ref{example:policiesFIFO} and \ref{example:mixture} are not monotone in general. In fact, consider under FIFO policy (Example~\ref{example:policiesFIFO}) a single diverge node $v$ with $\{i\} = \E_v^-$ and $\{j,k\} = \E_v^+$. Then the FIFO policy reads
	\begin{align*}
		f_{ij}(\rho,t) 
			&	= R_{ij}(t)d_i(\rho_i,t)\min\left\{1,\frac{s_j(\rho_j,t)}{R_{ij}(t)d_i(\rho_i,t)},\frac{s_k(\rho_k,t)}{R_{ik}(t)d_i(\rho_i,t)}\right\}		\\
		f_{ik}(\rho,t) 
			&	= R_{ik}(t)d_i(\rho_i,t)\min\left\{1,\frac{s_j(\rho_j,t)}{R_{ij}(t)d_i(\rho_i,t)},\frac{s_k(\rho_k,t)}{R_{ik}(t)d_i(\rho_i,t)}\right\}			
	\end{align*}
so that when $\frac{s_j(\rho_j,t)}{R_{ij}(t)d_i(\rho_i,t)} < \frac{s_k(\rho_k,t)}{R_{ik}(t)d_i(\rho_i,t)} < 1$ we have
	$$
		f_{ik}(\rho,t) = \frac{R_{ik}(t)}{R_{ij}(t)}s_j(\rho_j,t)
	$$
and therefore $\frac{\partial \fin_k(\rho,t)}{\partial \rho_j} < 0$. The system under FIFO policies is thus in general non monotone, as already observed in \cite{CooganACC14}. The same obviously holds for mixed policies.
}

\textbf{Example \ref{example:policiesDaganzo} - contd.}\textit{
Routing under the priority rule proposed in \cite{Daganzo:95} for merge with at most two incoming cells, and the non-FIFO policies in Example~\ref{example:policiesNonFIFO} for diverge is also monotone. We only need to study the merge case, and to this aim consider a node $v$ with $\E_v^- = \{i,k\}$ and $\E_v^+ =\{j\}$. If $d_i(\rho_i,t)+d_k(\rho_k,t) \leq s_j(\rho_j,t)$, then $\fout_i(\rho,t) = f_{ij}(\rho,t) = d_i(\rho_i,t)$ and $\fout_k(\rho,t) = f_{kj}(\rho,t) = d_k(\rho_k,t)$ and the two inequalities in \eqref{eq:constraintMonotonicity} are satisfied. Assume thus $d_i(\rho_i,t)+d_k(\rho_k,t) > s_j(\rho_j,t)$. First of all, notice that
	$$
		\frac{\partial \fout_i(\rho,t)}{\partial \rho_k} 
			=	\frac{\partial f_{ij}(\rho,t)}{\partial \rho_k} 
			\in
			\left\{0, -\frac{\partial d_k(\rho_k,t)}{\partial \rho_k}\right\}
		\,\quad
		\frac{\partial \fout_i(\rho,t)}{\partial \rho_j} 
			\in
			\left\{0, \frac{\partial s_j(\rho_j,t)}{\partial \rho_j},p_i\frac{\partial s_j(\rho_j,t)}{\partial \rho_j}\right\} 
	$$
so the rightmost inequalities in \eqref{eq:constraintMonotonicity} are satisfied. It remains to study the dependence of
	\begin{align*}
		\fin_j(\rho,t) 
			&	= mid\left\{d_i(\rho_i,t), s_j(\rho_j,t) - d_k(\rho_k,t), p_is_j(\rho_j,t)\right\}\\
			&	\qquad\qquad+mid\left\{d_k(\rho_k,t), s_j(\rho_j,t) - d_i(\rho_i,t), p_ks_j(\rho_j,t)\right\}
	\end{align*}
on $\rho_i$ and $\rho_k$. Since $d_i(\rho_i,t) > s_j(\rho_j,t) - d_k(\rho_k,t)$ and $d_k(\rho_k,t) > s_j(\rho_j,t) - d_i(\rho_i,t)$, we only have to study the following cases:
\begin{itemize}
	\item[a)] $s_j(\rho_j,t) - d_k(\rho_k,t) < d_i(\rho_i,t)\leq p_is_j(\rho_j,t)$: then by $p_i+p_k = 1$ we have $d_k(\rho_k,t)> p_ks_j(\rho_j,t)$. So
		\begin{itemize}
			\item[a.1)] if $p_ks_j(\rho_j,t)\leq s_j(\rho_j,t)-d_i(\rho_i,t) < d_k(\rho_k,t)$, then $\fin_j(\rho,t) = s_j(\rho_j,t)$;
			\item[a.2)] if $s_j(\rho_j,t)-d_i(\rho_i,t) \leq p_ks_j(\rho_j,t)< d_k(\rho_k,t)$, then $\fin_j(\rho,t) = p_ks_j(\rho_j,t) + d_i(\rho_i,t)$. Moreover, $d_i(\rho_i,t)\leq p_is_j(\rho_j,t)$ implies $p_ks_k(\rho_j,t) \leq s_k(\rho_j,t) - d_i(\rho_i,t)$, therefore actually $s_j(\rho_j,t)-d_i(\rho_i,t) = p_ks_j(\rho_j,t)$, hence $\fin_j(\rho,t) = s_j(\rho_j,t)$.
		\end{itemize}
	\item[b)] $s_j(\rho_j,t) - d_k(\rho_k,t) \leq p_is_j(\rho_j,t) \leq d_i(\rho_i,t)$: then again $p_i+p_k =1$ yields $p_ks_j(\rho_j,t) \geq s_j(\rho_j,t) - d_i(\rho_i,t)$, and then
		\begin{itemize}
			\item[b.1)] if $s_j(\rho_j,t)-d_i(\rho_i,t) \leq p_ks_j(\rho_j,t)\leq d_k(\rho_k,t)$, then $\fin_j(\rho,t) = s_j(\rho_j,t)$;
			\item[b.2)] if $s_j(\rho_j,t)-d_i(\rho_i,t) <d_k(\rho_k,t) \leq p_ks_j(\rho_j,t)$, then $\fin_j(\rho,t) = p_is_j(\rho_j,t) + d_k(\rho_k,t)$. Similarly to the point a.2), $d_k(\rho_k,t) \leq p_ks_j(\rho_j,t)$ implies $p_is_j(\rho_j,t) \leq s_j(\rho_j,t) - d_k(\rho_k,t)$, so that $p_is_j(\rho_j,t) = s_j(\rho_j,t) - d_k(\rho_k,t)$ and $\fin_j(\rho,t) = s_j(\rho_j,t)$.
		\end{itemize}
	\item[c)] $p_is_j(\rho_j,t) \leq s_j(\rho_j,t)-d_k(\rho_k,t) < d_i(\rho_i,t)$: then by $p_i+p_k =1$ it holds true $p_ks_j(\rho_j,t) \geq d_k(\rho_k,t)$, hence the only possibility is $s_j(\rho_j,t) - d_i(\rho_i,t) < d_k(\rho_k,t) \leq p_ks_j(\rho_j,t)$, which yields $\fin_j(\rho,t) = s_j(\rho_j,t)$.
\end{itemize}
In all cases it holds true $\fin_j(\rho,t) = s_j(\rho_j,t)$, so $\frac{\partial \fin_j(\rho,t)}{\partial \rho_i} =\frac{\partial \fin_j(\rho,t)}{\partial \rho_k} =0$, hence the leftmost inequalities in \eqref{eq:constraintMonotonicity} are also satisfied.
}

Basic properties characterizing monotone dynamical transportation networks are gathered in the following result.

\begin{theorem}
\label{theo:monotonicity+contraction}
Let \eqref{eq:systfirst} be a monotone dynamical transportation network. 
Let $\rho^{(1)}(t)$ and $\rho^{(2)}(t)$, for $t\ge0$, be solutions of \eqref{eq:systfirst} corresponding to initial conditions $\rho^{(1)}(0),\rho^{(2)}(0)\in\S$ with continuous inflow vectors $\lambda^{(1)}(t)$ and $\lambda^{(2)}(t)\geq0$ respectively.%\in\RR_+^{\R}$. 
Then, 
\begin{enumerate}
\item[(i)] if $\rho^{(1)}(0)\le\rho^{(2)}(0)$ and $\lambda^{(1)}(t)\le\lambda^{(2)}(t)$, then $\rho^{(1)}(t)\le\rho^{(2)}(t)$ for all $t\ge0$;
\item[(ii)] if $\lambda^{(1)}(t)=\lambda^{(2)}(t)$, then  $||\rho^{(1)}(t)-\rho^{(2)}(t)||_1$ is non-increasing in $t\ge0$.
\end{enumerate}
\end{theorem}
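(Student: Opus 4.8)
The plan is to handle the two parts separately, both hinging on reading the Kamke--Müller quasimonotonicity condition directly off \eqref{eq:constraintMonotonicity}. First I would verify that the system is cooperative. Writing $g_i=\fin_i-\fout_i$ and fixing $i\ne j$, the inflow term satisfies $\partial\fin_i/\partial\rho_j=\partial_{\rho_j}\sum_k f_{ki}\ge0$ and the outflow term satisfies $\partial\fout_i/\partial\rho_j=\partial_{\rho_j}\sum_k f_{ik}\le0$ by the two inequalities in \eqref{eq:constraintMonotonicity}, whence $\partial g_i/\partial\rho_j\ge0$ for almost every $\rho\in\S$. Together with the observation that the inflow enters additively and monotonically (for $i\in\R$ we have $g_i=\lambda_i-\fout_i$, increasing in $\lambda_i$, and $\lambda$ is absent from every other component), this is exactly the hypothesis of the standard comparison theorem for cooperative systems (cf.\ Appendix~\ref{dynamicalSystems}). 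Applying it to the two vector fields obtained by freezing $\lambda^{(1)}(t)\le\lambda^{(2)}(t)$, with ordered data $\rho^{(1)}(0)\le\rho^{(2)}(0)$, yields $\rho^{(1)}(t)\le\rho^{(2)}(t)$ for all $t\ge0$, which is (i).

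For (ii), with $\lambda^{(1)}=\lambda^{(2)}$, I would set $z(t)=\rho^{(1)}(t)-\rho^{(2)}(t)$ and introduce the componentwise maximum and minimum $\rho^+=\rho^{(1)}\vee\rho^{(2)}$, $\rho^-=\rho^{(1)}\wedge\rho^{(2)}$. Since $t\mapsto\|z(t)\|_1$ is locally Lipschitz, it suffices to bound its derivative at a.e.\ $t$, where $\frac{d}{dt}\|z\|_1=\sum_i\sgn{z_i}\big(g_i(\rho^{(1)})-g_i(\rho^{(2)})\big)$. The key step will be the pointwise inequality $\sgn{z_i}\big(g_i(\rho^{(1)})-g_i(\rho^{(2)})\big)\le g_i(\rho^+)-g_i(\rho^-)$ for every $i$. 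If $z_i>0$ then $\rho^+_i=\rho^{(1)}_i$ and $\rho^-_i=\rho^{(2)}_i$, while $\rho^+\ge\rho^{(1)}$ and $\rho^-\le\rho^{(2)}$ agree with $\rho^{(1)}$ and $\rho^{(2)}$ in coordinate $i$; cooperativity (monotonicity of $g_i$ in the off-$i$ coordinates) then gives $g_i(\rho^+)\ge g_i(\rho^{(1)})$ and $g_i(\rho^-)\le g_i(\rho^{(2)})$. The case $z_i<0$ is symmetric, and for $z_i=0$ the left side is $0\le g_i(\rho^+)-g_i(\rho^-)$ since $\rho^+\ge\rho^-$ with equality in coordinate $i$. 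Summing yields $\frac{d}{dt}\|z\|_1\le\sum_i\big(g_i(\rho^+)-g_i(\rho^-)\big)$.

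It then remains to show the right-hand side is nonpositive, which is where mass conservation enters. Summing \eqref{eq:systfirst} over all cells, the internal flows cancel in pairs (each $f_{ki}$ appears with a $+$ in $\fin_i$ and a $-$ in $\fout_k$), leaving $\sum_i g_i(\rho,t)=\sum_{i\in\R}\lambda_i(t)-\sum_{i\in\R^o}d_i(\rho_i,t)$. Because $\lambda^{(1)}=\lambda^{(2)}$, the source terms coincide for $\rho^+$ and $\rho^-$, so $\sum_i\big(g_i(\rho^+)-g_i(\rho^-)\big)=-\sum_{i\in\R^o}\big(d_i(\rho^+_i,t)-d_i(\rho^-_i,t)\big)\le0$, the last inequality holding since $\rho^+_i\ge\rho^-_i$ and each off-ramp demand $d_i$ is increasing. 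Hence $\frac{d}{dt}\|z\|_1\le0$ almost everywhere, which proves (ii).

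I expect the main obstacle to be regularity rather than the algebra. The flow functions are only Lipschitz, so \eqref{eq:constraintMonotonicity} holds merely almost everywhere and $g$ is not $C^1$; consequently the comparison theorem used in (i) and the integrated cooperativity inequalities $g_i(\rho^+)\ge g_i(\rho^{(1)})$ in (ii) must be justified for cooperative vector fields that are only Lipschitz, where a naive integration of $\partial g_i/\partial\rho_j$ along a segment could run into the measure-zero exceptional set. I would handle this through the $\ell_1$-contraction principle for monotone, mass-conserving systems from our previous work (cf.\ Appendix~\ref{technicalresults} and Lemma~\ref{lemma:l1strictlydecreasing}), e.g.\ by mollifying $g$ with a smoothing that preserves quasimonotonicity and passing to the limit, or by working throughout with the lower Dini derivative of $\|z(t)\|_1$.
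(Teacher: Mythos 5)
Your proof is correct. Part (i) is essentially identical to the paper's: both read the cooperativity condition $\partial g_i/\partial\rho_j\ge 0$ ($i\ne j$) and the monotone dependence on $\lambda$ off \eqref{eq:constraintMonotonicity} and \eqref{eq:finfout}, and then invoke the Kamke comparison theorem for monotone controlled systems. For part (ii) you take a genuinely more self-contained route: the paper simply verifies the two hypotheses \eqref{equation:contractionAssumption1} and \eqref{equation:contractionAssumption2} of the $\ell_1$-contraction principle, Lemma~\ref{lem:l1-contraction} (imported from earlier work), using the same mass-conservation identity $\sum_i g_i(\rho,t)=\sum_{i\in\R}\lambda_i(t)-\sum_{i\in\R^o}d_i(\rho_i,t)$ that you derive, and then applies \eqref{ineq:contraction} as a black box; you instead reprove the contraction inequality inline via the lattice decomposition $\rho^+=\rho^{(1)}\vee\rho^{(2)}$, $\rho^-=\rho^{(1)}\wedge\rho^{(2)}$, the pointwise bound $\sgn{z_i}\bigl(g_i(\rho^{(1)})-g_i(\rho^{(2)})\bigr)\le g_i(\rho^+)-g_i(\rho^-)$, and the off-ramp dissipation term. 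What your version buys is transparency — the reader sees exactly where cooperativity and mass conservation enter, without consulting the external reference — at the cost of having to confront the Lipschitz-only regularity of $g$ yourself (integrating a.e.-defined partial derivatives along segments, and the a.e.\ differentiation of $\|z(t)\|_1$), issues you correctly flag and which the paper sidesteps by delegating them to the cited lemma. Your pointwise inequality and its case analysis are sound, so either resolution of the regularity point (mollification preserving quasimonotonicity, a Fubini argument on segments, or simply falling back on Lemma~\ref{lem:l1-contraction}) completes the argument.
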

\begin{proof}
Notice that by \eqref{eq:constraintMonotonicity} and \eqref{eq:finfout} it holds true 
	\begin{align*}
		&\frac{\partial g_i(\rho,t)}{\partial \rho_j} \geq 0, &&\hspace{-2cm} \forall i\neq j\in\E	\\
		&\left.\frac{\partial g_i(\rho,t)}{\partial \lambda_k}\right|_{\lambda_k= \lambda_k(t)} \geq 0, &&\hspace{-2cm} \forall i\in\E\,, k\in\R
	\end{align*}

Point (i) is then a direct consequence of Kamke's theorem \cite[Theorem 1.2]{HirschPS03}, \cite{KamkeAM32} for monotone controlled systems \cite{AngeliTAC03}. 

Point (ii) is a consequence of Lemma~\ref{lem:l1-contraction} in Appendix. Indeed, $g(\cdot,\cdot)$ satisfies \eqref{equation:contractionAssumption1} by \eqref{eq:constraintMonotonicity}, and moreover
	$$
		\sum_{i\in\E}g_i(\rho,t) = \sum_{i\in\R}\lambda_i(t) - \sum_{i\in\R^o}d_i(\rho_i,t)
	$$
so \eqref{equation:contractionAssumption2} is also satisfied by the properties of the demand functions. Therefore, by \eqref{ineq:contraction}, along the evolution of the system it holds true
	$$
		\frac{d}{dt}\|\rho^{(1)}(t) - \rho^{(2)}(t)\|_1
		=
		\sum_{i\in\E}\sgn{\rho_i^{(1)}(t)-\rho_i^{(2)}(t)} \left( g_i(\rho^{(1)}(t),t) -g_i(\rho^{(1)}(t),t) \right)
		\leq 
		0
	$$
namely $\|\rho^{(1)}(t) - \rho^{(2)}(t)\|_1$ does not increase.
\end{proof}

We shall now investigate some consequences of the previous result.
\medskip

Point (i) of Theorem \ref{theo:monotonicity+contraction} states that the trajectories of a monotone dynamical transportation network are monotone systems, namely, they maintain the natural partial ordering with respect to initial conditions and inputs. An analogous monotonicity property holds for the solutions of some hyperbolic partial differential equations, including the celebrated Lighthill-Whitham and Richards model: e.g., cf.~Kru\v{z}kov's Theorem \cite[Proposition 2.3.6]{Serre:99} for entropy solutions of scalar conservation laws.

The monotonicity property established in point (i) of Theorem \ref{theo:monotonicity+contraction} implies the existence of equilibria, convergent solutions, and periodic solutions provided that the flow functions and the inflow vector are, respectively, constant, convergent, or periodic in time, as formalized in the following result. 

\begin{lemma}
\label{lemma:existenceEquilibriaPeriodicSolutions}
Let \eqref{eq:systfirst} be a monotone dynamical transportation network with continuous inflow vector $\lambda(t)\in\RR^\R$, $\lambda(t)\geq0$.%\in\RR_+^{\R}$.
Then, 
\begin{enumerate}
\item[(i)] if the inflow vector $\lambda(t) \equiv \lambda^*$ and, for all $(i,j)$, the flow functions $f_{ij}(\rho,t)=f_{ij}^*(\rho)$ do not depend on time, and if for at least one finite initial condition, the corresponding trajectory does not grow unbounded in time, then there exists an equilibrium $\rho^*\in\S$ such that $\liminf\hat{\rho}(t)\ge\rho^*$ for every $\hat{\rho}(0)\in\S$. 
\item[(ii)] if the inflow vector and the flow functions are convergent in time, i.e., if $\lim_{t\to+\infty}\lambda(t)=\lambda^*$ and $\lim_{t\to+\infty}f_{ij}(\rho,t)=f^*_{ij}(\rho)$, for all $(i,j)$, and if for at least one finite initial condition, the corresponding trajectory does not grow unbounded in time, then there exists one trajectory such that $\rho(t)\to\rho^*$, and $\liminf\hat{\rho}(t)\ge\rho^*$ for every $\hat{\rho}(0)\in\S$.
\item[(iii)] if the inflow vector and the flow functions are periodic in time, i.e., if there exists some $T>0$ such that $\lambda(T+t)=\lambda(t)$ and $f_{ij}(\rho,t+T)=f_{ij}(\rho,t)$ for all $t\geq0$ and all $(i,j)$, and if for at least one finite initial condition the corresponding trajectory does not grow unbounded in time, then there exists a periodic solution $\rho(t+T)=\rho(t)$. 
\end{enumerate}
\end{lemma}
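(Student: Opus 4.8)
The plan is to exploit the order-preserving property established in point (i) of Theorem~\ref{theo:monotonicity+contraction}, together with the fact that $\zerobf$ is the minimal element of $\S\subseteq\RR_+^\E$: the trajectory issued from $\zerobf$ lies below every other trajectory, and it can be turned into a monotone approximation scheme for the desired equilibrium or periodic solution. Parts (i) and (iii) then reduce to a monotone-convergence argument and a monotone-fixed-point argument respectively, while the asymptotically autonomous case (ii) is the genuine difficulty.

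For (i), since the flow functions and inflow are time-invariant, write $\Phi_t$ for the associated flow map, which is order-preserving by Theorem~\ref{theo:monotonicity+contraction}(i) with equal constant inflows, and set $x(t):=\Phi_t(\zerobf)$. Because $x(s)=\Phi_s(\zerobf)\in\S\subseteq\RR_+^\E$ we have $x(s)\ge\zerobf=x(0)$; applying the order-preserving $\Phi_t$ and the semigroup property gives $x(t+s)=\Phi_t(x(s))\ge\Phi_t(\zerobf)=x(t)$, so $x$ is non-decreasing in $t$. The boundedness hypothesis supplies a finite $\rho^o$ whose trajectory $\Phi_t(\rho^o)$ stays bounded, and since $\zerobf\le\rho^o$ monotonicity yields $x(t)\le\Phi_t(\rho^o)$, so $x$ is bounded. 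A bounded non-decreasing trajectory of an autonomous system converges, and its limit $\rho^*$ satisfies $g(\rho^*)=0$. Finally, any $\hat\rho(0)\in\S$ dominates $\zerobf$, hence $\hat\rho(t)\ge x(t)\to\rho^*$ and $\liminf_{t}\hat\rho(t)\ge\rho^*$.

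For (iii), I would replace the flow map by the period map $P(\rho_0):=\rho(T;\rho_0)$, the state at time $T$ of the trajectory started at $\rho_0$; by $T$-periodicity of the data, a fixed point of $P$ extends to a $T$-periodic solution. Theorem~\ref{theo:monotonicity+contraction}(i), applied with the common periodic inflow, makes $P$ order-preserving, and continuous dependence on initial data makes $P$ continuous. Iterating from $\zerobf$ gives $P(\zerobf)\ge\zerobf$, hence $P^{n+1}(\zerobf)\ge P^{n}(\zerobf)$ by induction, while comparison with the bounded trajectory sampled at multiples of $T$ shows $P^{n}(\zerobf)\le P^{n}(\rho^o)$ is bounded. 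The non-decreasing bounded sequence $P^{n}(\zerobf)$ therefore converges to some $\bar\rho$, and continuity of $P$ gives $P(\bar\rho)=\bar\rho$, the sought periodic solution.

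The main obstacle is (ii), because monotonicity of trajectories is guaranteed with respect to initial conditions and inflows but \emph{not} with respect to changing the flow functions, so the time-varying trajectory from $\zerobf$ need no longer be monotone in $t$ and the clean argument of (i) breaks. My plan is the following. First, boundedness of the time-varying trajectory from $\rho^o$, combined with the theory of asymptotically autonomous systems, forces its $\omega$-limit set to be a nonempty compact invariant set of the \emph{limiting} autonomous semiflow; in particular the limit system possesses a bounded trajectory, so part (i) applies and delivers its minimal-from-below equilibrium $\rho^*$. Second, the trajectory $x(t)$ from $\zerobf$ is bounded by comparison — via monotonicity in the shared inflow $\lambda(t)$ with the given bounded trajectory — so it has a nonempty $\omega$-limit set governed by the limit dynamics. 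I would then use the monotone structure, namely that $\rho^*$ attracts from below in the limit system and that $x$ stays below every time-varying trajectory, to squeeze this $\omega$-limit set down to $\{\rho^*\}$; this simultaneously yields the convergent trajectory $x(t)\to\rho^*$ and, since $\hat\rho(t)\ge x(t)$ for every $\hat\rho(0)\in\S$, the asymptotic lower bound $\liminf_{t}\hat\rho(t)\ge\rho^*$.
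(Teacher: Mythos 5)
Your arguments for parts (i) and (iii) are correct and essentially coincide with the paper's: the trajectory from $\zerobf$ is non-decreasing in $t$ and converges to the minimal equilibrium, and the period map $F(\hat{\rho})=\phi(\hat{\rho};T,0)$ is an order-preserving continuous map whose iterates from $\zerobf$ increase to a fixed point. The only cosmetic difference is how boundedness of these monotone orbits is obtained: you compare with the given bounded trajectory using order preservation ($\zerobf\le\rho^o$ gives $x(t)\le\Phi_t(\rho^o)$), while the paper deduces it from the $\ell_1$ non-expansiveness in point (ii) of Theorem~\ref{theo:monotonicity+contraction}; both are valid, and you are right to make the continuity of the period map explicit where the paper leaves it implicit.

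Part (ii) is where the two diverge, and where your argument has a genuine gap. The paper disposes of (ii) in one line by invoking the converging-input/converging-state property of monotone control systems \cite{AngeliTAC03}, whereas you attempt a self-contained argument via asymptotically autonomous dynamics. Your reduction to the $\omega$-limit set of the trajectory $x(t)$ issued from $\zerobf$ is sound, and it does deliver the lower estimate $\liminf_t x(t)\ge\rho^*$ (every point of $\omega(x)$ admits a backward orbit in $\S$, hence dominates the limit-system orbit of $\zerobf$, hence dominates $\rho^*$). But the final ``squeeze'' does not close: both facts you marshal --- that $\rho^*$ attracts from below in the limit system, and that $x(t)$ minorizes every other time-varying trajectory --- are one-sided, and neither yields the needed upper estimate $\limsup_t x(t)\le\rho^*$. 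Without it, $\omega(x)$ could be a strictly larger equilibrium of the limit system: the early-time inflows and flow functions are unconstrained, so $x(t)$ may be pushed above the basin of $\rho^*$ before the dynamics settles, and part (i) gives only $\liminf\ge\rho^*$, not convergence, for limit-system trajectories started above $\rho^*$. What is missing is an upper comparison --- e.g., domination of the tail of $x(\cdot)$ by trajectories of slightly inflated autonomous systems whose minimal equilibria decrease to $\rho^*$ --- which is essentially the content of the converging-input/converging-state property the paper cites; note that any such comparison must also absorb the time variation of the flow functions themselves, for which Theorem~\ref{theo:monotonicity+contraction}(i) provides no ordering, exactly the difficulty you flagged at the outset.
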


\begin{proof}
Let $\rho(t)$ be the trajectory of the system with zero initial condition $\rho(0) = \zerobf$. Under constant inflow vector and flow functions that do not depend on time, the properties of monotone systems ensure that $\rho(t)$ is increasing in time in every component. This implies that $\lim_{t\to\infty}\rho(t)$ exists. By point (ii) of Theorem~\ref{theo:monotonicity+contraction}, moreover, $\lim_{t\to\infty}\rho(t)$ is finite if and only if any trajectory with finite initial condition that does not grow unbounded in time. Under the assumptions, therefore, $\lim_{t\to\infty}\rho(t) = \rho^*$ is finite, so by Barbalat's lemma it is an equilibrium. Since by monotonicity for an arbitrary initial condition $\hat{\rho}(0)$ it holds $\hat{\rho}(0)\geq\zerobf$, we have $\hat{\rho}(t)\geq\rho(t)$ for all $t\geq0$, which proves (i).

Point (ii) is now an application of the converging input - converging state property of monotone controlled systems \cite{AngeliTAC03}.

Finally, in the periodic setting, recall that the system evolves according to $\dot{\rho}(t) = g(\rho(t),t)$ with $g(\rho,t+T) = g(\rho,t)$ for all $t\geq 0$ and $\rho\in\S$. Let $\phi(\hat{\rho}; t, t_0)$ be the evolution of the system starting at time $t_0$ with initial condition $\rho(t_0) = \hat{\rho}$ up to time $t\geq t_0$. We claim that $\phi(\hat{\rho}; t+kT, 0) = \phi(\phi(\hat{\rho}; kT,0), t, 0)$ for all $t\geq0$ and any nonnegative integer $k$. In fact, $y(t) = \phi(\hat{\rho}; t+kT, 0)$ and $x(t) = \phi(\phi(\hat{\rho}; kT,0), t, 0)$ are for $t\geq 0$ the solutions of 
	$$
		\begin{cases}
			\dot{y}(t) = g(y(t), t+kT)	= g(y(t), t)\\
			y(0) = \phi(\hat{\rho}; kT,0)
		\end{cases}
		\qquad
		\begin{cases}
			\dot{x}(t) = g(x(t), t)	\\
			x(0) = \phi(\hat{\rho}; kT,0)
		\end{cases}		
	$$	
By uniqueness of the solutions, $y(t) = x(t)$, that is, $\phi(\hat{\rho}; t+kT, 0) = \phi(\phi(\hat{\rho}; kT,0), t, 0)$, for all $t\geq0$. 

Consider now the map $F:\S\to \S$, $F(\hat{\rho}) = \phi(\hat{\rho}; T, 0)$, and set $F^k(\hat{\rho}) = F(F^{k-1}(\hat{\rho}))$. In this way we define the discrete time system $\rho(kT) = F^k(\rho(0))$, $\rho(0) = \hat{\rho}$. We claim that $F^k(\hat{\rho}) = \phi(\hat{\rho}; kT, 0)$. It holds true for $k=1$. For $k\geq2
$, by the previous argument and by induction, 
	$$
		F^k(\hat{\rho}) = F(F^{k-1}(\hat{\rho})) = F(\phi(\hat{\rho}; (k-1)T, 0)) =  \phi(\phi(\hat{\rho}; (k-1)T, 0); T,0) = \phi(\hat{\rho}; kT, 0)\,.
	$$
This immediately implies that the discrete time system is monotone, as $\hat{\rho}_1 \geq \hat{\rho}_2$ yields $F^k(\hat{\rho}_1) = \phi(\hat{\rho}_1; kT, 0) \geq \phi(\hat{\rho}_2; kT, 0) = F^k(\hat{\rho}_2)$. Let $\tilde{\rho}(kT)$ be the trajectory of the discrete time system with $\tilde{\rho}(0) = \zerobf$. Notice that $\tilde{\rho}(kT) = \phi(\zerobf; kT, 0)$, namely, $\tilde{\rho}(kT)$ is the sampled version of the trajectory in the original system with zero density initial conditions. By monotonicity, $\tilde{\rho}(kT)$ is increasing in each component and admits a limit $\lim_{k\to\infty}\tilde{\rho}(kT) = \rho^*_T$. Since $\tilde{\rho}(kT) = \phi(\zerobf; kT, 0)$, by point (ii) of Theorem~\ref{theo:monotonicity+contraction} if at least one trajectory of the original system does not grow unbounded, then $\rho^*_T$ is finite. 

To conclude, consider the trajectory $\rho(t) = \phi(\rho(0); t, 0)$ of the original system with initial condition $\rho(0) = \rho^*_T$. Since $\rho^*_T$ is a fixed point for $F$, we have $\rho(T) = F(\rho^*_T) = \rho^*_T = \rho(0)$, and therefore $\rho(t)$ is a periodic trajectory for the original system. 
\end{proof}
\medskip

In addition to point (i), point (ii) of Theorem~\ref{theo:monotonicity+contraction} states that a monotone dynamical transportation network is non-expansive in the $l_1$-distance, namely it is incrementally stable \cite{AngeliTAC:02}. In particular, Theorem \ref{theo:monotonicity+contraction} directly implies a general (weak) stability property: the distance from any reference trajectory $\rho^*(t)$, being it, e.g., an equilibrium, a periodic, or convergent solution, can never increase in time. However, in general it is not guaranteed that the $l_1$-distance between two trajectories is strictly decreasing. In contrast, it is possible to show \cite{Lovisari.Como.ea:CDC14} that there are cases, such as multiple equilibria, where the distance between two trajectories remains constant in time. In the following, we provide some sufficient conditions for the $l_1$-distance from a reference trajectory to be strictly decreasing. 

Before stating the following result, we introduce a state-dependent dual graph.
\begin{definition}
\label{def:dualGraph}
For every $\rho\in\S$ and $t\geq0$, where both $\fin_i(\rho,t)$ and $\fout_i(\rho,t)$ are differentiable for every $i\in\E$, we associate a directed dual graph $\H(\rho,t)$ with node set coinciding with the set of cells $\E$ and where there is a directed link from  $i$ to $j$ if and only if  
$\frac{\partial \fin_j}{\partial\rho_i}>0$ or $\frac{\partial \fout_j}{\partial\rho_i}<0$. 
We shall say that $\H(\rho,t)$ is rooted if, for all $i\in\E\setminus\R^o$, there is a directed path from $i$ to some $j\in\R^o$. 
\end{definition}

\begin{example}
\label{example:dualGraph}
Consider the network shown in Fig.~\ref{fig:ExampleDualGraph}, and the non-FIFO policy presented in Example~\ref{example:policiesNonFIFO}. For any $\rho$ and any time $t$ in which the network is in free-flow, the graph $\H(\rho,t)$ has a link $(e,j)$ if and only if $R_{ej} > 0$, i.e., $\tau_e = \sigma_j$. In other words, in free-flow, $\H(\rho,t)$ corresponds to a graph obtained by exchanging the roles of nodes and links of the original physical graph. Indeed, the dual graph associated with the matrix $J$ in the proof of Proposition~\ref{prop:freeflowGeneral} is $\H(\rho^*)$. If cell $4$ in Fig.~\ref{fig:ExampleDualGraph} is congested, namely, its inflow is bounded by its supply, then the directions of links $(3,4)$ and $(6,4)$ are reversed, and an additional link $(3,6)$ appears due to the interdependence of the dynamics of $3$ and $6$. Both graphs are rooted, since, for example, for every cell there is a directed path to at least one off-ramp.

\begin{figure}
	\centering
	\includegraphics[scale=0.6]{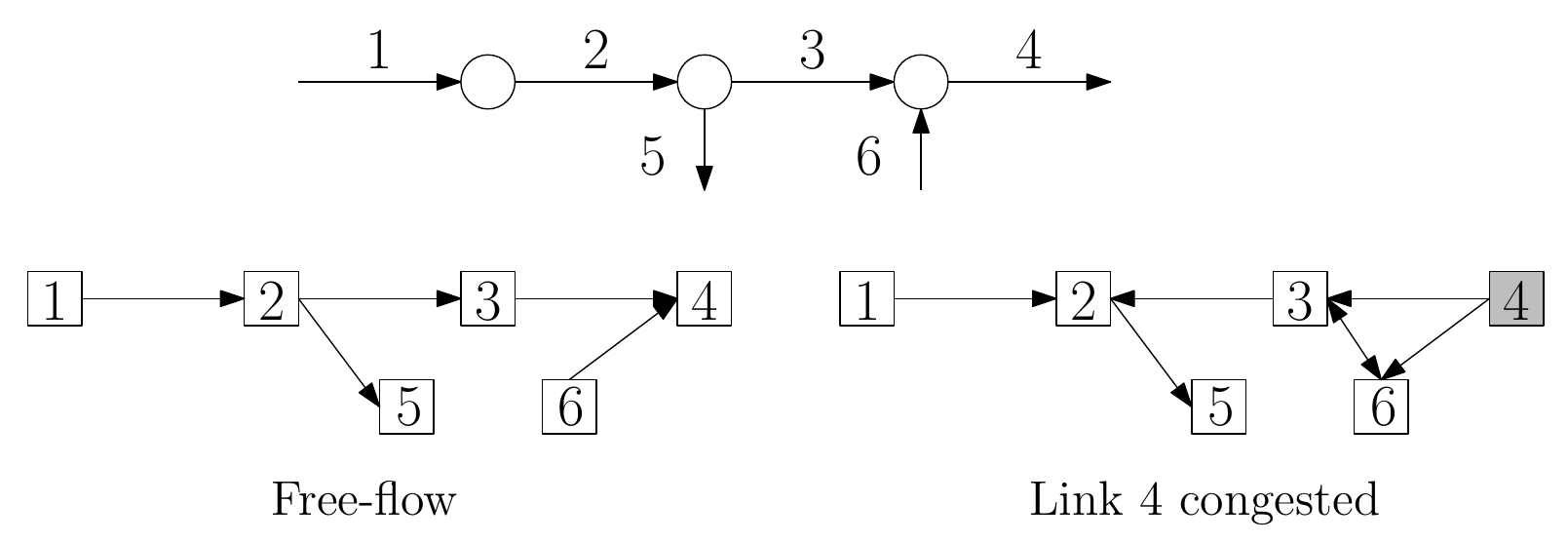} \\
	(a) \qquad \qquad \qquad \qquad \qquad (b)
	\caption{A network and the corresponding dual graphs in (a) free-flow and in (b) a congestion scenario.}
	\label{fig:ExampleDualGraph}
\end{figure}
\end{example}

We are now ready to state and prove the following Lemma, whose proof, being a bit technical, is postponed to Appendix \ref{technicalresults}.

\begin{lemma}\label{lemma:l1strictlydecreasing}
Let \eqref{eq:systfirst} be a monotone dynamical transportation network with continuous inflow vector $\lambda(t)\in\RR^\R$, $\lambda\geq0$.%\in\RR_+^{\R}$.  
 Let $\rho^{(1)}(t)$ and $\rho^{(2)}(t)$ be two solutions corresponding to initial conditions $\rho^{(1)}(0), \rho^{(2)}(0)\in\S$. Then, for all $t\ge0$ such that $\H(\rho^{(1)}(t),t)$ is rooted and $\rho^{(1)}(t) \neq \rho^{(2)}(t)$, one has that 
$$\frac{\de}{\de t}||\rho^{(1)}(t)-\rho^{(2)}(t)||_1<0\,.$$
\end{lemma}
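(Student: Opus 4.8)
The plan is to prove the statement by contradiction: I assume that at such a time $t$ the (a.e.-defined) derivative $D:=\frac{\de}{\de t}\|\rho^{(1)}(t)-\rho^{(2)}(t)\|_1$ equals $0$, and derive a contradiction with rootedness of $\H(\rho^{(1)}(t),t)$. Throughout I suppress $t$, write $z:=\rho^{(1)}-\rho^{(2)}$, introduce the componentwise maximum and minimum $\rho^{\vee}:=\rho^{(1)}\vee\rho^{(2)}$ and $\rho^{\wedge}:=\rho^{(1)}\wedge\rho^{(2)}$, and the sign classes $S^+:=\{i:z_i>0\}$, $S^-:=\{i:z_i<0\}$, $S^0:=\{i:z_i=0\}$. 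As in the proof of Theorem~\ref{theo:monotonicity+contraction}, at a point of differentiability $D=\sum_{i}\sgn{z_i}(g_i(\rho^{(1)})-g_i(\rho^{(2)}))=\sum_{i\in S^+}(g_i(\rho^{(1)})-g_i(\rho^{(2)}))+\sum_{i\in S^-}(g_i(\rho^{(2)})-g_i(\rho^{(1)}))$.

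First I would sharpen the contraction estimate into an identity isolating the dissipation at the off-ramps. Using that $g_i$ is non-decreasing in every $\rho_j$, $j\ne i$ (this is \eqref{eq:constraintMonotonicity} via \eqref{eq:finfout}, as recorded in the proof of Theorem~\ref{theo:monotonicity+contraction}), that for $i\in S^+$ the vectors $\rho^{(1)},\rho^{\vee}$ agree in coordinate $i$ while $\rho^{(1)}\le\rho^{\vee}$, and symmetrically $\rho^{(2)},\rho^{\wedge}$ agree in coordinate $i$ with $\rho^{(2)}\ge\rho^{\wedge}$, I obtain $g_i(\rho^{(1)})-g_i(\rho^{(2)})\le g_i(\rho^{\vee})-g_i(\rho^{\wedge})$, and the mirror inequality for $i\in S^-$. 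Summing,
\[ D\le \sum_{i\in S^+\cup S^-}\bigl(g_i(\rho^{\vee})-g_i(\rho^{\wedge})\bigr). \]
The mass-conservation identity $\sum_i g_i(\rho)=\sum_{i\in\R}\lambda_i-\sum_{i\in\R^o}d_i(\rho_i)$ gives $\sum_{i\in\E}(g_i(\rho^{\vee})-g_i(\rho^{\wedge}))=-\sum_{i\in\R^o}(d_i(\rho^{\vee}_i)-d_i(\rho^{\wedge}_i))$, so that
\[ D\le -\sum_{i\in\R^o}\bigl(d_i(\rho^{\vee}_i)-d_i(\rho^{\wedge}_i)\bigr)-\sum_{i\in S^0}\bigl(g_i(\rho^{\vee})-g_i(\rho^{\wedge})\bigr). \]
Since $d_i$ is strictly increasing and each $S^0$-summand is non-negative (again by off-diagonal monotonicity, as $\rho^{\vee}_i=\rho^{\wedge}_i$ there), both right-hand sums are non-negative. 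Hence $D=0$ forces (a) $z_i=0$ for every off-ramp, i.e. $\R^o\subseteq S^0$; (b) every $S^0$-term vanishes; and (c) each per-cell inequality above is tight, which, given the one-sided bounds used to derive them, yields $g_i(\rho^{(1)})=g_i(\rho^{\vee})$ for $i\in S^+$, $g_i(\rho^{(2)})=g_i(\rho^{\vee})$ for $i\in S^-$, and $g_i(\rho^{\vee})=g_i(\rho^{\wedge})$ for $i\in S^0$.

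The crux, and the step I expect to be the main obstacle, is to convert these finite-difference equalities into the vanishing of the \emph{pointwise} derivatives that define $\H(\rho^{(1)},t)$: the contraction estimate lives on the segment between the two (possibly far-apart) states, whereas the dual graph is read off at $\rho^{(1)}$ alone. I would resolve this with a monotone-rigidity observation: a function non-decreasing in each coordinate and taking equal values at two comparable corners is constant on the entire box they span. Applied to $g_i$ on the relevant sub-box — $[\rho^{\wedge},\rho^{\vee}]$ for $i\in S^0$ (degenerate in coordinate $i$), and the analogous boxes spanned by the $S^-$- resp. $S^+$-coordinates for $i\in S^+$ resp. $i\in S^-$ — the equalities in (b)--(c) make $g_i$ constant there. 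Because $\fin_i$ is non-decreasing and $\fout_i$ non-increasing in each off-diagonal coordinate by \eqref{eq:constraintMonotonicity}, constancy of $g_i=\fin_i-\fout_i$ forces both $\fin_i$ and $\fout_i$ to be constant on the box (two monotone functions with constant sum are each constant), whence $\partial\fin_i/\partial\rho_k=\partial\fout_i/\partial\rho_k=0$ at $\rho^{(1)}$ — using that $\rho^{(1)}$ is a corner of the box and $\fin_i,\fout_i$ are differentiable at $\rho^{(1)}$, which is exactly the regularity built into Definition~\ref{def:dualGraph}. Thus, under $D=0$, the graph $\H(\rho^{(1)},t)$ has \emph{no} edge from $S^-$ into $S^+$, none from $S^+$ into $S^-$, and none from $S^+\cup S^-$ into $S^0$.

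Finally I would invoke rootedness. Fix any $j\in S^+\cup S^-$ (nonempty since $z\ne0$); say $j\in S^+$. By rootedness there is a directed $\H(\rho^{(1)},t)$-path from $j$ to some off-ramp $r\in\R^o\subseteq S^0$. But the edge restrictions just established show that an edge leaving an $S^+$-vertex can only enter another $S^+$-vertex, so by induction the whole path stays in $S^+$ and never reaches $S^0$; the case $j\in S^-$ is symmetric. This contradicts $r\in S^0$, proving $D<0$. The remaining points are routine: the identity for $D$ holds on the full-measure set of times where $t\mapsto\|z(t)\|_1$ is differentiable, which at the times of interest is compatible with the differentiability required by Definition~\ref{def:dualGraph}, and this bookkeeping is what is deferred to Appendix~\ref{technicalresults}.
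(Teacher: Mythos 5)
Your proof is correct, but it takes a genuinely different route from the paper's. The paper first \emph{localizes}: it introduces an auxiliary state $\rho^c$ at $\ell_1$-distance $\varepsilon$ from $\rho^{(1)}$ on the segment towards $\rho^{(2)}$ and reduces, via the triangle inequality and the non-expansiveness from Theorem~\ref{theo:monotonicity+contraction}(ii), to proving strict decrease of $\|\rho^{(1)}(t)-\rho^c(t)\|_1$; it then writes the derivative as a signed combination of path integrals of $\nabla g$ along two short segments meeting at the corner $\rho^{(1)}\vee\rho^c$, and extracts a strictly signed term from a breadth-first layering of $\E$ built from the off-ramps in $\H(\rho^{(1)},t)$. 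That argument requires choosing $\varepsilon$ small enough that $\H(\rho,t)=\H(\rho^{(1)},t)$ along both segments, i.e.\ local constancy of the dual graph, which for merely Lipschitz flow functions is an additional regularity assertion. Your global contradiction argument --- tightness of every monotonicity inequality when $D=0$, box rigidity of coordinate-wise monotone functions, the splitting of $g_i=\fin_i-\fout_i$ into two monotone summands with constant sum, and reading the vanishing partials only at the single point $\rho^{(1)}$ where Definition~\ref{def:dualGraph} already guarantees differentiability --- sidesteps that issue entirely, at the price of being purely qualitative (no decay rate emerges). The two proofs share the join/meet corner construction and the same level of informality in differentiating $t\mapsto\|\rho^{(1)}(t)-\rho^{(2)}(t)\|_1$ through the sign function (the $S^0$ terms are handled no more carefully in the paper), but the mechanisms for converting rootedness into strictness --- the paper's layered partition producing one strictly positive path integral, versus your invariant that under $D=0$ no dual-graph edge leaves a sign class, so no path from $S^+\cup S^-$ can reach an off-ramp --- are substantively different, and yours is arguably the cleaner of the two.
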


Lemma \ref{lemma:l1strictlydecreasing} is instrumental in proving the next result, which provides sufficient conditions for reference trajectories, i.e., equilibria or periodic trajectories, to be globally asymptotically stable.

\begin{proposition}\label{prop:GAS}
Let \eqref{eq:systfirst} be a monotone dynamical transportation network with continuous inflow vector $\lambda(t)\in\RR^\R$, $\lambda\geq0$ %\in\RR_+^{\R}$. 
such that $f_{ij}(\rho,t)$ are independent of $t$ for all $i\in\E\setminus \R^o$ and $j\in\E\setminus \R$. 
Then, 
\begin{enumerate}
\item[(i)] If $\rho^*$ is an equilibrium and $\H(\rho^*)$ is rooted, then $\rho^*$ is globally asymptotically stable; 
\item[(ii)] If $\rho^*(t)$ is a periodic solution and $\H(\rho^*(t),t)$ is rooted for some $t\ge0$, then $\rho^*(t)$ is a globally asymptotically stable periodic solution. 
\end{enumerate}
\end{proposition}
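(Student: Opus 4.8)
The plan is to use, in both parts, the $\ell_1$ distance to the reference trajectory as a Lyapunov-type function, and to combine the non-expansiveness of Theorem~\ref{theo:monotonicity+contraction}(ii) with the strict contraction of Lemma~\ref{lemma:l1strictlydecreasing}. The crucial observation I would exploit is that Lemma~\ref{lemma:l1strictlydecreasing} only tests rootedness of $\H(\rho^{(1)}(t),t)$ along its \emph{first} argument; hence, taking the reference ($\rho^*$ in part (i), $\rho^*(t)$ in part (ii)) as $\rho^{(1)}$ makes the rootedness hypothesis needed to invoke the lemma coincide exactly with the one assumed in the proposition. With this choice, Lyapunov stability is immediate: setting $V(t):=\|\rho(t)-\rho^*\|_1$ (resp.\ $\|\rho(t)-\rho^*(t)\|_1$), Theorem~\ref{theo:monotonicity+contraction}(ii) gives $V(t)\le V(0)$, so trajectories starting $\ell_1$-close to the reference stay close; moreover $V$ is non-increasing and hence converges to some $V_\infty\ge0$. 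The whole content is then to prove attractivity, i.e.\ $V_\infty=0$.

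For part (i), since $\rho^*$ is a (time-invariant) equilibrium and the flow functions do not depend on time, I would treat the dynamics as autonomous (equivalently, the inflow is constant, as in the equilibrium setting of Lemma~\ref{lemma:existenceEquilibriaPeriodicSolutions}(i)). Putting $\rho^{(1)}(t)\equiv\rho^*$ in Lemma~\ref{lemma:l1strictlydecreasing} and using that $\H(\rho^*)$ is rooted, I obtain $\frac{\de}{\de t}V(t)<0$ whenever $\rho(t)\ne\rho^*$. Then I argue by contradiction: if $V_\infty>0$, non-expansiveness confines the orbit to the bounded set $\{\rho:\|\rho-\rho^*\|_1\le V(0)\}$, so it is relatively compact and some sequence satisfies $\rho(t_n)\to\bar\rho$ with $\|\bar\rho-\rho^*\|_1=V_\infty>0$, hence $\bar\rho\ne\rho^*$. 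By continuous dependence on initial conditions and autonomy, $\rho(t_n+s)=\phi(\rho(t_n);s,0)\to\phi(\bar\rho;s,0)=:\psi(s)$ for each $s\ge0$; since $V(t_n+s)\to V_\infty$, this forces $\|\psi(s)-\rho^*\|_1\equiv V_\infty$, i.e.\ $V$ is constant along $\psi$. But $\psi(0)=\bar\rho\ne\rho^*$, so Lemma~\ref{lemma:l1strictlydecreasing} gives $\frac{\de}{\de t}\|\psi(s)-\rho^*\|_1\big|_{s=0}<0$, a contradiction, so $V_\infty=0$ and $\rho(t)\to\rho^*$.

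For part (ii), I would fix $t_0$ with $\H(\rho^*(t_0),t_0)$ rooted; by $T$-periodicity of both $\rho^*(\cdot)$ and the dynamics, $\H(\rho^*(t_0+kT),t_0+kT)$ is rooted for every integer $k\ge0$. Applying Lemma~\ref{lemma:l1strictlydecreasing} with $\rho^{(1)}(t)=\rho^*(t)$ at these instants yields $\frac{\de}{\de t}V(t)\big|_{t=t_0+kT}<0$ whenever $\rho(t_0+kT)\ne\rho^*(t_0)$, and, together with monotonicity of $V$ on each interval $[t_0+kT,t_0+(k+1)T]$, the sampled sequence $V_k:=V(t_0+kT)$ strictly decreases until it meets the reference. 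Writing $\xi_k:=\rho(t_0+kT)$, these obey $\xi_{k+1}=\phi(\xi_k;t_0+T,t_0)$ by the composition/periodicity argument already used in the proof of Lemma~\ref{lemma:existenceEquilibriaPeriodicSolutions}. If $V_\infty>0$, boundedness gives a subsequence $\xi_{k_n}\to\bar\rho\ne\rho^*(t_0)$; continuous dependence and $T$-periodicity give $\rho(t_0+k_nT+s)\to\psi(t_0+s):=\phi(\bar\rho;t_0+s,t_0)$, and since $\rho^*(t_0+k_nT+s)=\rho^*(t_0+s)$ I conclude $\|\psi(t_0+s)-\rho^*(t_0+s)\|_1\equiv V_\infty$. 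As $\psi(t_0)=\bar\rho\ne\rho^*(t_0)$ with $\H(\rho^*(t_0),t_0)$ rooted, Lemma~\ref{lemma:l1strictlydecreasing} forces strict decrease at $t_0$, contradicting constancy; hence $V_\infty=0$ and $\rho(t)$ converges to the periodic orbit.

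The hard part is the passage from the purely \emph{instantaneous} strict decrease supplied by Lemma~\ref{lemma:l1strictlydecreasing} (valid only where $\rho$ differs from the reference, and, in the periodic case, only at the sampling instants) to genuine convergence. I handle this by a limit-set argument: boundedness from non-expansiveness makes the orbit relatively compact, continuous dependence on initial conditions transfers the constant-distance property to a full limit trajectory $\psi$, and the strict decrease at the limit point $\bar\rho$ produces the contradiction. The periodic case carries the extra delicacy of reducing to the Poincar\'e/sampling map, so that the single-time rootedness hypothesis, propagated through periodicity to the instants $t_0+kT$, suffices to contract the sampled distance over each period.
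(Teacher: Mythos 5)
Your proposal is correct and rests on exactly the same two pillars as the paper's proof: non-expansiveness of the $\ell_1$ distance from Theorem~\ref{theo:monotonicity+contraction}(ii) for Lyapunov stability, and the strict decrease from Lemma~\ref{lemma:l1strictlydecreasing}, applied with the reference trajectory in the role of $\rho^{(1)}$ so that the rootedness hypothesis of the lemma coincides with the one assumed in the proposition. Where you genuinely diverge is in the passage from pointwise strict decrease to attractivity. The paper's proof is terse there: in part (i) it asserts that $\frac{\de}{\de t}\|\rho(t)-\rho^*\|_1<0$ for all $t$ ``thus'' implies $\rho(t)\to\rho^*$ (a strictly decreasing positive function need not tend to zero), and in part (ii) it invokes ``by continuity'' a uniform $\delta>0$ such that the distance contracts by a factor $(1-\delta)$ over every period, without justifying that $\delta$ can be chosen independently of $k$. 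Your $\omega$-limit-set argument --- boundedness from non-expansiveness gives relative compactness of the orbit, continuous dependence on initial conditions transfers the constant-distance property to a full limit trajectory $\psi$, and Lemma~\ref{lemma:l1strictlydecreasing} applied at the limit point $\bar\rho\ne\rho^*$ produces the contradiction --- closes precisely this gap, at the cost of being longer. Your treatment of part (ii), reducing to the sampled Poincar\'e map and propagating the single-time rootedness to the instants $t_0+kT$ via periodicity of $\rho^*$ and of the dynamics, is also sound and matches the spirit of the paper's per-period contraction while being more carefully justified.
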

\begin{proof}
The two points are an immediate consequence of Lemma~\ref{lemma:l1strictlydecreasing}. In fact, assume $\rho^*$ is an equilibrium and $\H(\rho^*)$ is rooted. Let $\rho(t)$ be the trajectory of the system starting with an arbitrary initial condition $\rho_0$. Then 
	$$
		\frac{\de }{\de t}||\rho(t) - \rho^*||_1 < 0, \qquad \forall t
	$$
and thus $\rho(t) \stackrel{t\to \infty}{\longrightarrow}\rho^*$.

Similarly, let $\rho^*(t)$ be a periodic solution. If there exists $\hat{t}\in[0,T)$ such that $\H(\rho^*(\hat{t}+kT), \hat{t}+kT)$ is rooted for all integers $k\geq0$, then by continuity of the trajectory there exists $\delta > 0$ such that on each period $[kT, (k+1)T)$ the $l_1$-distance between $\rho^*(t)$ and $\rho(t)$ decreases by at least $\delta$, i.e., for all $k\geq 0$,
	$$
		||\rho((k+1)T) - \rho^*(t)||_1 \leq (1-\delta)||\rho(kT) - \rho^*(t)||_1\,,
	$$
so that $\mathrm{dist}(\rho(t), \rho^*(t)) \stackrel{t\to\infty}{\longrightarrow}0$. 

Since the initial condition was arbitrary, the previous argument establishes that the equilibrium or the periodic solution are globally attractive. Since stability is ensured by Theorem~\ref{theo:monotonicity+contraction}, the claim is proved.
\end{proof}

\begin{remark}
Notice that Proposition~\ref{prop:GAS} is \emph{not} limited to free-flow equilibria. In fact, even though for FIFO policies, the only equilibrium that a transportation network can admit is in free-flow, for other policies, such as those presented in Examples~\ref{example:policiesNonFIFO} and Example~\ref{example:mixture}, the network can admit equilibria in which some cells are congested, namely, their inflow is bounded by their supply. Examples are provided in Section~\ref{section:numericalExample}.
\end{remark}

Proposition~\ref{prop:GAS} can be used to generalize the property of local stability of free-flow equilibria in Proposition~\ref{prop:freeflowGeneral} to global stability, for monotone transportation networks, as stated in the next result.

\begin{theorem}
\label{theo:freeflow}
Let \eqref{eq:systfirst} be a monotone dynamical transportation network with time-invariant demand $d_i(\rho_i)$ and supply function $s_i(\rho_i)$ on every cell $i\in\E$, and constant turning preference matrix $R$ and inflow vector $\lambda\in\RR^{\R}_+$. Extend $\lambda$ to a non-negative vector in $\RR^\E$%$\RR_+^{\E}$ 
by putting $\lambda_i=0$ for all $i\in\E\setminus\R$, and let $C\in\RR^\E$
be the vector of cells' flow capacity. Then, if \eqref{eq:conditionFreeflow} holds true, $\rho^*_i = d_i^{-1}(f^*_i)$, for all $i\in\E$, is a globally asymptotically stable equilibrium. 
\end{theorem}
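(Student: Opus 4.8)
The plan is to reduce the statement to two results already in hand. Proposition~\ref{prop:freeflowGeneral}, whose proof does not use monotonicity, produces the free-flow equilibrium $\rho^*$; Proposition~\ref{prop:GAS}(i) then upgrades any equilibrium of a monotone, time-invariant network to a globally asymptotically stable one, \emph{provided the associated dual graph $\H(\rho^*)$ is rooted}. The only genuinely new work is therefore to verify that $\H(\rho^*)$ is rooted at the free-flow equilibrium.

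First I would invoke Proposition~\ref{prop:freeflowGeneral}: since \eqref{eq:conditionFreeflow} holds, $\rho^*_i = d_i^{-1}(f^*_i)$ is an equilibrium lying in free-flow, so $f_{ij}(\rho^*) = R_{ij}d_i(\rho^*_i)$ for all $(i,j)$, and by continuity the same identity holds on a neighborhood of $\rho^*$. Next I would compute $\H(\rho^*)$ explicitly. In free-flow, $\fout_i(\rho) = \sum_j R_{ij}d_i(\rho_i)$ depends on $\rho$ only through $\rho_i$, so $\partial \fout_j/\partial\rho_i = 0$ for $i\neq j$; and $\fin_j(\rho) = \sum_k R_{kj}d_k(\rho_k)$ gives $\partial\fin_j/\partial\rho_i = R_{ij}\,d_i'(\rho^*_i)$, which is strictly positive exactly when $R_{ij}>0$ since $d_i$ is strictly increasing. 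Hence, as anticipated in Example~\ref{example:dualGraph}, $\H(\rho^*)$ is precisely the graph carrying an edge $i\to j$ iff $R_{ij}>0$, i.e., the graph encoded by the off-diagonal pattern of the Jacobian $J$ in the proof of Proposition~\ref{prop:freeflowGeneral}.

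The crux is to show this graph is rooted, i.e., that every $i\in\E\setminus\R^o$ admits a positive-preference path to an off-ramp; I expect this to be the main obstacle, and I would argue it by contradiction using the well-definedness of $f^*$. Suppose some $i_0\in\E\setminus\R^o$ reaches no off-ramp along edges with $R_{ij}>0$, and let $A\subseteq\E$ be the set of cells reachable from $i_0$ through such edges. Then $A$ contains no off-ramp, and $A$ is closed under positive preferences: $i\in A$ and $R_{ij}>0$ force $j\in A$. Since each cell of $A$ satisfies $\sum_j R_{ij}=1$, closure yields $\sum_{j\in A}R_{ij}=1$ for all $i\in A$, so the principal submatrix $R_{AA}$ is row-stochastic and has $1$ as an eigenvalue. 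Because $R_{ij}=0$ whenever $i\in A$ and $j\notin A$, the matrix $I-R^T$ is block upper-triangular with respect to the splitting $(A,A^c)$, whence $\det(I-R^T)=\det(I-R_{AA})\det(I-R_{A^cA^c})=0$. This renders $(I-R^T)^{-1}$ undefined, contradicting \eqref{eq:conditionFreeflow}; therefore $\H(\rho^*)$ is rooted.

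Finally, the network being monotone with time-invariant flow functions, Proposition~\ref{prop:GAS}(i) applies to the equilibrium $\rho^*$ with rooted $\H(\rho^*)$ and delivers global asymptotic stability, completing the argument. The delicate point is the rootedness step: note that rootedness of the physical graph $\G$ alone does \emph{not} suffice, since a physical edge may carry zero turning preference, which is exactly why the invertibility of $I-R^T$ must be invoked.
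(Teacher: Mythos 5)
Your proposal is correct and follows the same overall route as the paper: obtain the free-flow equilibrium from Proposition~\ref{prop:freeflowGeneral}, verify that the dual graph $\H(\rho^*)$ is rooted, and conclude via Proposition~\ref{prop:GAS}. The one place where you genuinely diverge is the rootedness step. The paper disposes of it in one line, asserting that ``the connectivity assumption on the actual physical graph implies that $\H(\rho^*)$ is rooted,'' which (as in Example~\ref{example:dualGraph}) tacitly identifies the edge set of $\H(\rho^*)$ with $\{(i,j):\tau_i=\sigma_j\}$ — i.e., it implicitly assumes $R_{ij}>0$ whenever $\tau_i=\sigma_j$. You instead derive rootedness from the well-definedness of \eqref{eq:conditionFreeflow}: if some cell reached no off-ramp along positive-preference edges, the closed reachable set $A$ would make $R_{AA}$ row-stochastic and hence $\det(I-R^T)=0$, contradicting the invertibility needed to define $f^*$. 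This is a sharper argument — it covers the case of physical edges carrying zero turning preference, which the paper's one-line justification does not — and your closing remark correctly identifies why rootedness of $\G$ alone is insufficient. The computation of $\H(\rho^*)$ in free flow (edges $i\to j$ exactly where $R_{ij}>0$, using strictness of $f^*<C$ to get free flow on a neighborhood of $\rho^*$) is also correct. In short: same skeleton, but your treatment of the rootedness step is more rigorous than the paper's.
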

\begin{proof}
The fact that $\rho^*$ is an equilibrium was proven in the proof of Proposition~\ref{prop:freeflowGeneral}. By the same proof, and as in Example~\ref{example:dualGraph}, it is straightforward to see that the connectivity assumption on the actual physical graph implies that $\H(\rho^*)$ is rooted. Then $\rho^*$ is globally asymptotically stable by Proposition~\ref{prop:GAS}.
\end{proof}

\begin{remark}
\label{remark:nonmonotone}
While Example~\ref{example:counterexampleGASFIFO} shows that, for FIFO policies, the free-flow equilibrium is not globally asymptotically stable in general, 
Theorem~\ref{theo:freeflow} can be generalized to non-monotone policies in some special cases. The key is that, while monotonicity is a sufficient condition for the $\ell_1$ contraction principle in Lemma~\ref{lemma:l1strictlydecreasing} to hold true, it is not necessary. This happens, e.g., when every node in the network is either a merge, or all its outgoing cells are off-ramps. For such networks, it can be shown that the free-flow equilibrium from Theorem~\ref{theo:freeflow} is globally asymptotically stable under the mixture model for sufficiently small $\theta \in (0,1]$.
\end{remark}

\section{Control of Dynamical Transportation Networks}
\label{sec:control}

In this section, we describe how to cast the equilibrium selection and the optimal control of dynamical transportation networks as convex optimizations or linear programs. Throughout, we will consider \emph{uncontrolled} demand and supply functions, 
	$
		d^u_i(\rho_i,t)\,, s_i^u(\rho_i,t)\,, i\in\E\,,
	$
which are both \emph{concave} and, respectively, strictly increasing and non-increasing in $\rho_i$. 
We will measure the system performance through cost functions $\Psi:\,\RR_+^{\E} \to \RR_+$ that are \emph{convex, and strictly increasing} in each component. A relevant example is provided by the weighted sum of cell-wise densities 
	\begin{equation}
		\label{Psilinear}
		\Psi(\rho) = \sum_{i\in\E}\eta_i\rho_i\,,
	\end{equation}
for non-negative weights $\eta_i\geq0$, $i\in\E$, which recovers standard performance metrics such as the Total Travel Time, e.g., see  \cite{GomesTRC06}. 
We will assume that the \emph{controlled} demand functions have the following form 
	$$
		d_i(\rho_i,t) = \alpha_i(t)d_i^u(\rho_i,t)	\,,\qquad i\in\E\,,
	$$
where $\alpha_i(t) \in [0,1]$ are control parameters. In the context of freeway networks, a given $\alpha_i(t)$ can be realized through appropriate setting of speed limits. In particular, if the uncontrolled demand function on cell $i$ is linear as in \eqref{eq:linearDemand}, then its rescaling is equivalent to the modulation of the free-flow speed $v_i(t)=v_i^{u}(t)\alpha_i(t)$ \cite{HegyiTRC05}, where $v_i^{u}(t)$ could be interpreted as the maximum possible speed due to, e.g., safety considerations. 

We will consider two distinct settings combining the control of demand functions described above with 
	\begin{description}
		\item[(I)] \emph{control} of the turning preference matrix; or 
	\end{description}
	\begin{description}
		\item[(II)] supply control. 
	\end{description}
	
By \textbf{(I)}, we mean the capability of modifying an uncontrolled turning preference matrix $R^u(t)$ to a controlled one $R(t)$ which still has nonnegative entries and row sums equal to $1$, and satisfies the additional constraint $d_i(\rho_i,t)R_{ij}(t)\le d_i^u(\rho_i,t)R_{ij}^u(t)$. In other words, demand control combined with control of the turning preference matrix amounts to the ability of independently reducing the demand from cell $i$ that intends to turn to cell $j$. On the other hand, by \textbf{(II)} we refer to the possibility of saturating the supply functions 
	$$
		s_i(\rho_i,t) = \min\{s_i^u(\rho_i,t),\beta_i(t)\}	\,,\qquad i\in\E\,,
	$$
where $s_i^u(\rho_i,t)$, $i \in \E$ are the uncontrolled supply functions, and $\beta_i(t)\ge0$, $i \in \E$ are control parameters to be actuated, e.g., through metering. 

We first present results on the optimal equilibrium selection for dynamical transportation networks  where the inflows, the uncontrolled supply and demand functions and turning preference matrix are all time-invariant. Then, we will deal with the optimal control problem for dynamical transportation networks with general time-varying parameters. 

\subsection{Equilibrium selection}
\label{subsec:equilibriumcontrol}

We start by characterizing the set of all possible equilibria associated to the stationary case, where the uncontrolled supply and demand functions, as well as the inflow vector and the turning preference matrix are time-invariant. 
Consider the set $\F\subseteq\RR_+^{\E}\times\RR_+^{\E\times\E}$ of pairs $(x,y)$ of a density vector $x$ and a cell-to-cell flow matrix $y$ satisfying the following constraints 
	\begin{equation}
		\label{Fdef} 
		\begin{array}{rcl}
			\ds\sum_iy_{ij}\le s_j^u(x_j)			&	\qquad		&\forall j\in\E \\[10pt] 
			y_{ij}\le R^u_{ij}d_i^u(x_i)					&	\qquad		&\forall i, j\in\E \\[10pt] 
			\ds\sum_iy_{ij}=\ds\sum_iy_{ji}		&	\qquad		&\forall j\in\E \setminus(\R \cup\R^o)\\[10pt] 
			\ds\lambda_i=\sum_jy_{ij}					&	\qquad		&\forall i\in\R \\[10pt]
			\ds\sum_j y_{ji}\le d_i^u(x_i)				&	\qquad		&\forall i\in\R^o
		\end{array}
	\end{equation}
The following result guarantees that every equilibrium of the dynamical transportation network belongs to the set $\F$. 
\begin{lemma}
\label{lemma:FcontainsAllEquilibria}
Consider a dynamical transportation network where the uncontrolled supply and demand functions, as well as the inflow vector and the uncontrolled turning preference matrix are time-invariant. If $\rho^*\in\S$ is an equilibrium, then $(\rho^*,f^*)\in\F$, where $f^*=\{f_{ij}(\rho^*)\}_{i,j\in\E}$.
\end{lemma}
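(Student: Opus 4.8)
The plan is to observe that this lemma requires neither monotonicity nor any of the stability machinery: it is a direct verification that the five families of constraints defining $\F$ in \eqref{Fdef} are each implied either by the equilibrium condition $\fin_i(\rho^*)=\fout_i(\rho^*)$ or by the structural flow bounds \eqref{eq:constraintFetoj}--\eqref{eq:constraintFin}. Throughout I would set $f^*_{ij}=f_{ij}(\rho^*)$ and record the standing fact that $f^*_{ij}=0$ whenever $\tau_i\ne\sigma_j$, so that unrestricted sums over $\E$ collapse to sums over the relevant upstream set $\E_j^-$ or downstream set $\E_i^+$. This lets me match the sums $\sum_i y_{ij}$ and $\sum_j y_{ij}$ appearing in \eqref{Fdef} directly with the inflow and outflow expressions in \eqref{eq:finfout}.

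First I would dispatch the three constraints that come from equilibrium. Writing $\fin_i(\rho^*)=\fout_i(\rho^*)$ and invoking the case split in \eqref{eq:finfout}: for an on-ramp $i\in\R$ this reads $\lambda_i=\sum_{j\in\E_i^+}f^*_{ij}=\sum_j f^*_{ij}$, which is exactly the fourth constraint; for an internal cell $j\in\E\setminus(\R\cup\R^o)$ it reads $\sum_{i\in\E_j^-}f^*_{ij}=\sum_{i\in\E_j^+}f^*_{ji}$, i.e. $\sum_i f^*_{ij}=\sum_i f^*_{ji}$, which is the third constraint; and for an off-ramp $i\in\R^o$ it reads $\sum_{j\in\E_i^-}f^*_{ji}=d_i(\rho^*_i)$. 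Since the controlled demand obeys $d_i(\rho^*_i)=\alpha_i d_i^u(\rho^*_i)\le d_i^u(\rho^*_i)$, this last identity yields $\sum_j f^*_{ji}\le d_i^u(\rho^*_i)$, the fifth constraint.

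It then remains to check the two inequality constraints, which do not use equilibrium at all but only the structural bounds evaluated at $\rho^*$. The supply bound \eqref{eq:constraintFin} gives $\sum_i f^*_{ij}\le s_j(\rho^*_j)\le s_j^u(\rho^*_j)$, using $s_j(\rho_j)=\min\{s_j^u(\rho_j),\beta_j\}\le s_j^u(\rho_j)$, which is the first constraint; and the turning bound \eqref{eq:constraintFetoj} gives $f^*_{ij}\le R_{ij}d_i(\rho^*_i)$, so the second constraint reduces to verifying $R_{ij}d_i(\rho^*_i)\le R^u_{ij}d_i^u(\rho^*_i)$. This single inequality is the one place deserving care, because it is where the two control modalities must be treated separately: under supply control (setting \textbf{(II)}) one has $R=R^u$ and $d_i(\rho^*_i)\le d_i^u(\rho^*_i)$, so it is immediate, whereas under turning-preference control (setting \textbf{(I)}) it is precisely the imposed admissibility constraint $d_i(\rho_i,t)R_{ij}(t)\le d_i^u(\rho_i,t)R^u_{ij}(t)$ read at $\rho^*$. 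In both cases the second constraint holds, and with all five families verified I conclude $(\rho^*,f^*)\in\F$. I expect no genuine difficulty beyond this bookkeeping of controlled versus uncontrolled quantities and the routine identification of the collapsed sums with $\fin$ and $\fout$.
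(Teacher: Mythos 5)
Your proof is correct and follows essentially the same route as the paper's: the last three constraints of $\F$ come from the equilibrium condition $\fin_i(\rho^*)=\fout_i(\rho^*)$ via \eqref{eq:finfout}, and the first two from the structural bounds \eqref{eq:constraintFin} and \eqref{eq:constraintFetoj}. You are in fact somewhat more careful than the paper's terse argument in tracking the passage from controlled to uncontrolled quantities (e.g.\ $R_{ij}d_i(\rho^*_i)\le R^u_{ij}d_i^u(\rho^*_i)$), a step the paper leaves implicit.
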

\begin{proof}
The conservation law \eqref{eq:systfirst} along with the definition of inflow and outflow \eqref{eq:finfout} ensures that $f^*$ satisfies the last three constraints in \eqref{Fdef}. In particular, the last set of constraints is satisfied with equality since at equilibrium $\sum_{j\in\E_i^-}f_{ji}(\rho^*) = \fin_i(\rho^*) = \fout_i(\rho^*) = d_i(\rho_i^*)$ for all offramps $i\in\R^o$. Finally, $f^*$ satisfies the first two constraints in \eqref{Fdef} because of \eqref{eq:constraintFin} and \eqref{eq:constraintFetoj}.
\end{proof}

Observe that the constraints that characterize $\F$ are convex. 
%Observe that continuity and concavity of the demand and supply functions imply that $\F$ is a closed and convex set. In particular, if the demand functions are linear and the supply functions are affine, then $\F$ is a polytope. 
This implies that the optimization 
	\begin{equation}
	\label{optimization}
		\min_{(x,y)\in\F}\Psi(x)
	\end{equation}
is a convex problem. Moreover, in the case of linear demand, affine supply, and linear cost function \eqref{Psilinear}, the optimization \eqref{optimization} is a linear program. Convexity and linearity are extremely appealing properties of optimization problems, as convex and linear programs are classes of problems for which efficient algorithms, solvers, and toolboxes have been developed and tested. Indeed, in our simulations in Section~\ref{section:numericalExample} we use the \verb+Matlab+ package \verb+CVX+ \cite{CVXSoftware, GrantRALC08}. A promising next step is to adapt the Alternating Direction Method of Multipliers (ADMM) \cite{BoydFTML11} to the problem under analysis. ADMM is a popular approach to solve optimization problems on networks in a \emph{distributed} fashion, namely, the algorithm relies on a network of agents which perform local computations and exchange information with nearest neighbors to solve the optimization problem in an iterative manner. The computational complexity of ADMM scales nicely with the size of the network too. We intend to pursue this direction in future research. Concluding, the availability of off-the-shelf algorithms and solvers is the reason why we are particularly interested in the case in which \eqref{optimization} is convex. However, notice that the proposed control strategies would not change if convexity were lost, e.g., because the supply functions are not concave. The only difference is that solving \eqref{optimization} in the non-convex case would be computationally hard.
 
We now address the question of how to design control parameters such that the solution of the optimization \eqref{optimization} is a (stable) equilibrium for the controlled dynamical transportation network. We first consider case \textbf{(I)} where the demand control is combined with control of the turning preference matrix. 

\begin{proposition}
\label{proposition:turningPreferenceControl}
Consider a dynamical transportation network where the uncontrolled demand functions $d_i^u(\cdot)$ and supply functions $s_i^u(\cdot)$, as well as the inflow vector $\lambda$ and the uncontrolled turning preference matrix $R^u$ are all time-invariant. Let $(x^*,y^*)$ be a solution of the optimization \eqref{optimization}. Set time-invariant demand controls $\alpha_i$, controlled turning preference matrix $R$, and supply control $\beta_i$ as follows
	\begin{equation}
		\label{eq:controlVariablesFullControl}
		\begin{array}{rll}
			\nonumber\alpha_{i} &	= 
				\begin{cases}
				\frac{\sum_{k\in\E_i^+}y_{ik}^*}{d_i^u(x_i^*)},&	\textrm{ if }x_i^*\neq 0	\\
				0, &\textrm{ if }x_i^*= 0 
				\end{cases}
				&\qquad \forall i \in \E \setminus \R^o	\\[5pt] 
			R_{ij} &= 
				\begin{cases}
					\frac{y_{ij}^*}{\sum_{k\in\E_i^+}y_{ik}^*},& \textrm{ if } \sum_{k\in\E_i^+}y_{ik}^*\neq 0	\\
					\frac{1}{|\E_i^+|}, & \textrm{ if }\sum_{k\in\E_i^+}y_{ik}^*= 0	\\
				\end{cases}
				&\qquad \forall i, j \in \E \setminus \R^o, \, i\neq j	\\[5pt] 
		\nonumber\beta_i &= +\infty &\qquad \forall i \in \E \setminus \R \, 
		\end{array}
	\end{equation}
%Then $\alpha_iR_{ij}\le R^u_{ij}$  for all $i, j \in \E$, and $x^*$ is a free-flow equilibrium for the controlled dynamical transportation network, and hence $x^*$ is locally asymptotically stable.
Then $\alpha_iR_{ij}\le R^u_{ij}$  for all $i, j \in \E$, and $x^*$ is a stable free-flow equilibrium for the controlled dynamical transportation network. 
Moreover, if $\sum_{j\in\E_i^-}y_{ji}^* < \fmax_i$ for all $i\in\E$, then $x^*$ is locally asymptotically stable, and if, in addition, the dynamical transportation network is monotone, then $x^*$ is globally asymptotically stable. 
\end{proposition}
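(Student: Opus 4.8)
The plan is to verify, in the order they are stated, the controllability constraint $\alpha_iR_{ij}\le R^u_{ij}$, the assertion that $x^*$ is a free-flow equilibrium of the controlled dynamics, and finally the three stability claims by reduction to Proposition~\ref{prop:freeflowGeneral} and Theorem~\ref{theo:freeflow}. The first claim is a direct substitution: whenever the denominators are nonzero, $\alpha_iR_{ij}=\frac{\sum_{k\in\E_i^+}y^*_{ik}}{d_i^u(x_i^*)}\cdot\frac{y^*_{ij}}{\sum_{k\in\E_i^+}y^*_{ik}}=\frac{y^*_{ij}}{d_i^u(x_i^*)}$, and the second constraint defining $\F$ in \eqref{Fdef}, namely $y^*_{ij}\le R^u_{ij}d^u_i(x_i^*)$, gives exactly $\alpha_iR_{ij}\le R^u_{ij}$. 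The degenerate cases $x_i^*=0$ and $\sum_{k}y^*_{ik}=0$ must be treated separately: in the former the same constraint forces $y^*_{ij}=0$ (using $d_i^u(0)=0$), and for $i\in\R^o$ or $j\in\R$ the inequality is trivial since $R^u_{ij}=0$.

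Next I would show $x^*$ is a free-flow equilibrium. The controls are chosen precisely so that the controlled demand satisfies $d_i(x_i^*)=\alpha_id_i^u(x_i^*)=\sum_{k}y^*_{ik}$ and the intended flows satisfy $R_{ij}d_i(x_i^*)=y^*_{ij}$. Since $\beta_i=+\infty$ gives $s_j(x_j^*)=s_j^u(x_j^*)$, the first constraint of \eqref{Fdef} reads $\sum_k R_{kj}d_k(x_k^*)=\sum_k y^*_{kj}\le s_j^u(x_j^*)=s_j(x_j^*)$ for every $j$, which is exactly the hypothesis of \eqref{eq:constraintFetojEquality}; hence the actual flows coincide with the intended ones, $f_{ij}(x^*)=y^*_{ij}$, and $x^*$ is in free-flow. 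I would then read mass balance off the remaining constraints: for on-ramps $\fin_i(x^*)=\lambda_i=\sum_j y^*_{ij}=\fout_i(x^*)$, for interior cells $\fin_i=\sum_j y^*_{ji}=\sum_j y^*_{ij}=\fout_i$, and for an off-ramp the balance $\sum_j y^*_{ji}=d_i^u(x_i^*)$ comes from optimality: its density $x_i^*$ lower-bounds $\Psi$ only through $\sum_j y^*_{ji}\le d_i^u(x_i^*)$ (decreasing $x_i^*$ merely relaxes the supply constraint $\sum_k y^*_{ki}\le s_i^u(x_i^*)$ and appears in no other), so strict monotonicity of $\Psi$ forces this constraint to be active.

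For the stability claims I would pass to the flow vector $f^*$ with $f^*_i=\sum_{j\in\E_i^-}y^*_{ji}$, which by the mass balance above satisfies the conservation identity $f^*=R^Tf^*+\lambda$, hence $f^*=(I-R^T)^{-1}\lambda$. The bare Lyapunov stability follows from the free-flow Jacobian as in the proof of Proposition~\ref{prop:freeflowGeneral}: it is a Metzler matrix with nonpositive column sums, so its spectrum lies in the closed left half-plane. Under the additional hypothesis $\sum_{j\in\E_i^-}y^*_{ji}<\fmax_i$, i.e.\ $f^*<C$, condition \eqref{eq:conditionFreeflow} holds for the controlled network, so Proposition~\ref{prop:freeflowGeneral} yields local asymptotic stability; if moreover the network is monotone, Theorem~\ref{theo:freeflow} upgrades this to global asymptotic stability.

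I expect the main obstacle to be the equilibrium verification rather than the stability reduction. Two points require care. First, one must argue that the optimizer places each cell strictly on the free-flow side, so that the assumed strict inequality $f^*_i<\fmax_i$ is equivalent to the strict supply inequality $\sum_k R_{kj}d_k(x_k^*)<s_j(x_j^*)$ that makes the free-flow branch the actual local dynamics and the Jacobian argument valid; at the capacity boundary the dynamics switch between the free-flow and congested regimes and only the weaker (closed left half-plane) conclusion survives, which is exactly why the bare claim gives stability but not asymptotic stability. Second, the off-ramp balance is genuinely forced by optimality, so this is the one place where ``a solution'' must be read as an \emph{optimal} solution; I would make the exchange argument (decreasing $x_i^*$ until the off-ramp demand constraint is tight, checking feasibility of all other constraints is preserved) fully explicit.
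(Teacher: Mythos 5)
Your proposal is correct and follows essentially the same route as the paper's proof: use the first constraint of \eqref{Fdef} together with \eqref{eq:constraintFetojEquality} to conclude $f_{ij}(x^*)=y^*_{ij}$, read the mass balance off the remaining constraints with the off-ramp case forced by optimality and strict monotonicity of $\Psi$, and then invoke Proposition~\ref{prop:freeflowGeneral} and Theorem~\ref{theo:freeflow} for the asymptotic stability claims. You additionally spell out the verification of $\alpha_iR_{ij}\le R^u_{ij}$ and the degenerate cases, which the paper leaves implicit; the only caveat is that your ``closed left half-plane'' remark does not by itself establish Lyapunov stability of the nonlinear system at the capacity boundary, but the paper offers no more for that bare claim either.
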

\begin{proof}
Let $(x^*, y^*)\in\argmin_{(x,y)\in\F}\Psi(x)$ be a solution of the optimization in \eqref{optimization} and set the control signals as in \eqref{eq:controlVariablesFullControl}. Notice that $\beta_i=+\infty$ implies that no supply control is used. The choice of control parameters implies that $y_{ij}^*=\alpha_i R_{ij} d_i^u(x_i^*)=R_{ij} d_i(x_i^*)$ for all $i, j \in \E \setminus \R^o$, $i \neq j$. Then, for all $j \in \E \setminus \R$,
	\begin{align*}
		\sum_{i\in\E_j^-}R_{ij}d_i(x_i^*) 	&	= \sum_{i\in\E_j^-}\alpha_i R_{ij}d_i^u(x_i^*)
																					 = \sum_{i\in\E_j^-}y_{ij}^* \leq s_j^u(x_j^*)
	\end{align*}
where the last inequality is implied by the first constraint in \eqref{Fdef}.

Since this holds for all $i$, \eqref{eq:constraintFetojEquality} guarantees that $f_{ij}(x^*) = R_{ij}d_i(x_i^*)$, and hence $f_{ij}(x^*) = y_{ij}^*$, for all $i,j$. Then, the third and fourth constraints in \eqref{Fdef} imply that 
	\begin{equation}	
		\label{fin=fout} 
		\fin_i(x^*)=\fout_i(x^*)\,,
	\end{equation}
for every cell $i\in\E\setminus\R^o$. For off-ramps, \eqref{fin=fout} follows from the fact the last constraint in \eqref{Fdef} is necessarily satisfied with equality, for otherwise, if $\sum_{k}y_{ki}^*<d_i^u(x_i^*)$ for some $i\in\R^o$, a small decrease in $x^*_i$ would reduce the value of the objective function without violating any constraints. 

Finally, since $f_{ji}(x^*) = \R_{ji}^ud_j(x_j^*)$ for all $(j,i)$, $x^*$ is an equilibrium in free-flow. To conclude, let $\sum_{j\in\E_i^-}y_{ji}^* < \fmax_i$ for all $i\in\E$. 
Local stability of $x^*$ follows from Proposition~\ref{prop:freeflowGeneral}, and the global asymptotic stability, for monotone networks, follows from Theorem~\ref{theo:freeflow}.
\end{proof}

We now turn our attention to case \textbf{(II)}, where the turning preference matrix cannot be controlled, but, in addition to demand functions, supply functions can also be controlled. Our main result, stated below, is restricted to the case of monotone dynamical transportation networks where each node is either a merge, i.e., it has one outgoing cell, or a diverge, i.e., it has one incoming cell and multiple outgoing cells. Note that a node with one incoming and one outgoing cell will be referred to as a merge node.

\begin{proposition}
\label{proposition:SpeedLimitAndSupplyControl}
Consider a monotone dynamical transportation network where the uncontrolled demand functions $d_i^u(\cdot)$ and supply functions $s_i^u(\cdot)$, as well as the inflow vector $\lambda$ and the uncontrolled turning preference matrix $R^u$ are all time-invariant. Assume that each node is either a merge or a diverge. Let $(x^*,y^*)$ be a solution of the optimization \eqref{optimization}. Set time-invariant demand controls $\alpha_i$, supply controls $\beta_i$, and matrix of turning preferences $R$ as follows
\begin{equation}
		\label{eq:controlVariablesSpeedLimitAndSupplySaturation}
		\begin{array}{rll}
		\alpha_{i} &	= 
			\begin{cases}
				\frac{y_{ij}^*}{d_i^u(x_i^*)}, & \textrm{ if } x_i^* > 0, \tau_i \textrm{ is a merge}, \{j\} = \E_i^+\\
				0,& \textrm{ if } x_i^* = 0, \tau_i \textrm{ is a merge}	\\
				1, & \textrm{ if } \tau_i \textrm{ is a diverge}
			\end{cases}
				&\qquad \forall i \in \E \setminus \R^o	\\[5pt] 
			R_{ij} &= R_{ij}^u
				&\qquad \forall i, j \in \E \setminus \R^o, \, i\neq j	\\[5pt] 
		\nonumber\beta_i &= \begin{cases}
				y^*_{ji}, &\sigma_i \textrm{ is a diverge }, \{j\} = \E_i^-	\\
				+ \infty, & \sigma_i \textrm{ is a merge}
			\end{cases} &\qquad \forall i \in \E \setminus \R \, 
		\end{array}
	\end{equation}
Then, $x^*$ is a stable equilibrium for the controlled dynamical transportation network. 
\end{proposition}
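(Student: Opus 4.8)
The plan is to show that the prescribed controls make the realized inter-cell flows at $x^*$ coincide with the optimal flow matrix $y^*$, i.e.\ $f_{ij}(x^*)=y^*_{ij}$ for all $i,j\in\E$; once this is established, the conservation constraints defining $\F$ immediately give that $x^*$ is an equilibrium of the controlled network, and Lyapunov stability follows from the $\ell_1$ non-expansiveness of monotone networks. Since every node is by assumption a merge or a diverge, I would verify the key identity $f_{ij}(x^*)=y^*_{ij}$ by treating these two cases separately.

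At a merge node $v$ with unique outgoing cell $j$, every incoming cell $i\in\E_v^-$ satisfies $R_{ij}=R^u_{ij}=1$, and the demand control gives $d_i(x_i^*)=\alpha_i d_i^u(x_i^*)=y^*_{ij}$ (the boundary case $x_i^*=0$ being handled through the demand constraint in \eqref{Fdef}). Because $\sigma_j=v$ is a merge we have $\beta_j=+\infty$, hence $s_j(x_j^*)=s_j^u(x_j^*)$, and the first constraint in \eqref{Fdef} yields $\sum_{i\in\E_v^-}R_{ij}d_i(x_i^*)=\sum_{i}y^*_{ij}\le s_j(x_j^*)$. The free-flow implication \eqref{eq:constraintFetojEquality} then forces $f_{ij}(x^*)=R_{ij}d_i(x_i^*)=y^*_{ij}$ for every $i\in\E_v^-$.

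At a diverge node $u$ with unique incoming cell $j$ the demand control is $\alpha_j=1$, so $d_j(x_j^*)=d_j^u(x_j^*)$, while for each outgoing cell $i\in\E_u^+$ the supply saturation $\beta_i=y^*_{ji}$, combined with the first constraint of \eqref{Fdef} (which at a diverge reads $y^*_{ji}\le s^u_i(x_i^*)$), gives $s_i(x_i^*)=\min\{s^u_i(x_i^*),y^*_{ji}\}=y^*_{ji}$. For the monotone non-FIFO diverge rule of Example~\ref{example:policiesNonFIFO} the flow reduces to $f_{ji}(x^*)=\min\{R^u_{ji}d^u_j(x_j^*),\,s_i(x_i^*)\}$, and the second constraint of \eqref{Fdef}, $y^*_{ji}\le R^u_{ji}d^u_j(x_j^*)$, shows that this minimum equals $y^*_{ji}$. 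I expect this diverge computation to be the main obstacle: since the turning preferences are \emph{not} controlled in this setting, one cannot simply prescribe the split, and the argument must instead use the second constraint in $\F$ to guarantee that enough demand reaches $i$ to fill the saturated supply, so that the realized flow attains the cap $y^*_{ji}$ rather than falling below it.

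Having shown $f_{ij}(x^*)=y^*_{ij}$ everywhere, I would read off that $x^*$ is an equilibrium: the interior balance $\sum_i y^*_{ij}=\sum_i y^*_{ji}$, the on-ramp identity $\sum_j y^*_{ij}=\lambda_i$, and the off-ramp equality $\sum_j y^*_{ji}=d^u_i(x_i^*)$ (which holds at optimality, exactly as argued in the proof of Proposition~\ref{proposition:turningPreferenceControl}, since otherwise $x_i^*$ could be lowered to decrease $\Psi$) together give $\fin_i(x^*)=\fout_i(x^*)$ for all $i\in\E$. Finally, rescaling each demand by $\alpha_i\in[0,1]$ and saturating each supply by $\beta_i$ leaves demands increasing and supplies non-increasing, so the controlled merge/diverge rules remain monotone and the controlled network is again a monotone dynamical transportation network. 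Theorem~\ref{theo:monotonicity+contraction}(ii), applied with $\rho^{(2)}(t)\equiv x^*$, then makes $\|\rho(t)-x^*\|_1$ non-increasing along every trajectory, which is precisely the Lyapunov stability of $x^*$.
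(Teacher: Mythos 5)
Your overall architecture matches the paper's proof: show $f_{ij}(x^*)=y^*_{ij}$ at every junction, conclude that $x^*$ is an equilibrium from the balance constraints in $\F$, and obtain Lyapunov stability from point (ii) of Theorem~\ref{theo:monotonicity+contraction}. The merge-node computation and the final stability step are essentially identical to the paper's.

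There is, however, a genuine gap in your diverge case. You write that ``for the monotone non-FIFO diverge rule of Example~\ref{example:policiesNonFIFO} the flow reduces to $f_{ji}(x^*)=\min\{R^u_{ji}d^u_j(x_j^*),\,s_i(x_i^*)\}$,'' and you carry out the rest of the argument under that assumption. But the proposition is stated for an \emph{arbitrary} monotone dynamical transportation network: the flow functions are only assumed to satisfy \eqref{eq:constraintFetoj}--\eqref{eq:constraintFetojEquality} and the monotonicity conditions \eqref{eq:constraintMonotonicity}, not to have the explicit non-FIFO form. The missing step --- and the technical heart of the paper's proof --- is to show that these hypotheses alone \emph{force} $f_{ij}(x)=\min\{R_{ij}d_i(x_i),\,s_j(x_j)\}$ at any diverge node, for every state $x$. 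The paper does this by contradiction: if, say, $R_{ij}d_i(x_i)\le s_j(x_j)$ but $f_{ij}(x)<R_{ij}d_i(x_i)$, one lowers the densities on the other outgoing cells to reach a state $\hat{x}$ where \eqref{eq:constraintFetojEquality} forces free flow, and then integrates $\nabla f_{ij}$ along the segment from $\hat{x}$ to $x$; monotonicity makes the integral nonnegative, giving $f_{ij}(x)\ge f_{ij}(\hat{x})=R_{ij}d_i(x_i)$, a contradiction. A symmetric argument (perturbing $x_i$ downward until the demand meets the supply) handles the case $R_{ij}d_i(x_i)>s_j(x_j)$. Without this lemma your argument only proves the proposition for the specific policy of Example~\ref{example:policiesNonFIFO}, not at the stated level of generality. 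You correctly sensed that the diverge node is where the difficulty lies, but the resolution is not ``use the second constraint in $\F$ to guarantee enough demand reaches $i$'' --- that part is easy once the $\min$ formula is available; the real work is establishing the $\min$ formula itself from monotonicity.

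One further small point: your closing observation that the controlled network remains monotone (so that Theorem~\ref{theo:monotonicity+contraction}(ii) applies) is worth making explicit and is compatible with the paper, which leaves it implicit.
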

\begin{proof}
Let $(x^*, y^*)\in\argmin_{(x,y)\in\F}\Psi(x)$ be a solution of the optimization in \eqref{optimization}. Set the control parameters as in \eqref{eq:controlVariablesSpeedLimitAndSupplySaturation}, and notice that the turning preferences are not modified by the present control strategy, and $\beta_i=+\infty$ implies that no supply control is used.

As in the proof of Proposition~\ref{proposition:turningPreferenceControl}, we shall prove that $f_{ij}(x^*) = y_{ij}^*$ for all $i,j$. This in turn implies that $x^*$ is an equilibrium for the system. Stability is ensured by point (ii) of Theorem~\ref{theo:monotonicity+contraction}.

To this aim, let $v$ be a merge node with $\{j\} = \E_v^+$. If $\alpha_i$ is computed according to \eqref{eq:controlVariablesSpeedLimitAndSupplySaturation}, then (notice  $R_{ij} = 1$ for all $i\in\E_v^-$)
	$$
		\sum_{i\in\E_v^-}d_i(x_i^*) = \sum_{i\in\E_v^-}\alpha_id_i^u(x_i^*) = \sum_{i\in\E_j^-}y_{ij}^* \leq s_j^u(x_j^*)\,
	$$
where the last inequality is implied by the first constraint in \eqref{Fdef}.	
Therefore, \eqref{eq:constraintFetojEquality} guarantees that $f_{ij}(x^*) = d_i(x_i^*) = \alpha_id_i^u(x_i^*) = y_{ij}^*$, for all $i\in\E_v^-$.

Let instead $v$ be a diverge node with $\{i\} = \E_v^-$. First of all, we claim that the monotonicity conditions in \eqref{eq:constraintMonotonicity} imply that $f_{ij}(x) = \min\{R_{ij}d_i(x_i), s_j(x_j)\}$ for any $x\in\S$. We study two cases:
\begin{itemize}
	\item $R_{ij} d_i(x_i) \leq s_i(x_j)$: by \eqref{eq:constraintFetoj}, $f_{ij}(x) \leq R_{ij}d_i(x_i)$. If $f_{ij}(x) = R_{ij} d_i(x_i)$, the claim is proved, so assume by contradiction $f_{ij}(x) < R_{ij} d_i(x_i)$. Let $\hat{x}$ such that $\hat{x}_i = x_i$, $\hat{x}_j = x_j$, and $\hat{x}_k \leq x_k$, $k\in\E_v^+$, $j\neq k$, such that $R_{ik} d_i(x_i) = R_{ik} d_i(\hat{x}_i) \leq s_k(\hat{x}_k)$, for all $k\in\E_v^+$. Then \eqref{eq:constraintFetojEquality} implies $f_{ij}(\hat{x}) = R_{ij} d_i(x_i)$. Let $\gamma := \{\theta x + (1-\theta)\hat{x}:\,\theta\in[0,1]\}$ be a path from $\hat{x}$ to $x$. Then
	$$
		f_{ij}(x) = f_{ij}(\hat{x}) + \int_\gamma \nabla f_{ij}(\xi)d\xi \geq f_{ij}(\hat{x})
	$$
where the inequality follows by monotonicity since, the components of the state changing (increasing) along $\gamma$ do not include $j$. Thus, $f_{ij}(x) \geq f_{ij}(\hat{x}) = R_{ij}d_i(x_i)$ and $f_{ij}(x) < R_{ij} d_i(x_i)$, a contradiction. Therefore, if $R_{ij} d_i(x_i) \leq s_i(x_j)$, then $f_{ij}(x) = R_{ij} d_i(x_i)$. 
	\item $R_{ij} d_i(x_i) > s_j(x_j)$: by \eqref{eq:constraintFin}, $f_{ij}(x) \leq s_j(x_j)$. If $f_{ij}(x) = s_j(x_j)$, the claim is proved, so assume by contradiction $f_{ij}(x) < s_j(x_j)$. Let $\hat{x}$ be such that $\hat{x}_k = x_k$ for all $k\in\E_v^+$ and $\hat{x}_i < x_i$ be such that $R_{ij} d_i(\hat{x}_i) = s_j(\hat{x}_j) = s_j(x_j)$. Then, making explicit the dependence of $f_{ij}(x)$ on the $i$-th component of the state by writing $f_{ij}(x) = f_{ij}(x_i, \{x_k\}_{k\in\E_v^+})$, we obtain
	$$
		f_{ij}(x) = f_{ij}(\hat{x}) + \int_{\hat{x}_i}^{x_i}\frac{\partial}{\partial \xi_i}f_{ij}(\xi_i, \{x_k\}_{k\in\E_v^+})d\xi_i \geq f_{ij}(\hat{x}) = s_j(\hat{x}_j) = s_j(x_j)
	$$		
where the inequality holds again by monotonicity since $\frac{\partial f_{ij}(x)}{\partial x_i}\geq 0$. Thus $f_{ij}(x) < s_j(x_j)$ and $f_{ij}(x) \geq s_j(x_j)$, once again a contradiction. Therefore, if $R_{ij} d_i(x_i) > s_j(x_j)$, then $f_{ij}(x) = s_j(x_j)$.
\end{itemize}

In conclusion, $f_{ij}(x) = \min\{R_{ij}d_i(x_i), s_j(x_j)\}$ for all $x$, and thus in particular for $x^*$.

For the supply control computed according to \eqref{eq:controlVariablesSpeedLimitAndSupplySaturation}, we have (recall $R_{ij} = R_{ij}^u$ for all $(i,j)$ and $\alpha_i =1$ since $\tau_i = v$ is a diverge, so that $d_i(\cdot) = d_i^u(\cdot)$)
	$$
		f_{ij}(x^*) = \min\{R_{ij}d_i(x_i^*), s_j(x_j^*)\} = \min\{R_{ji}^ud_i^u(x_i^*), s_j^u(x_j^*), y^*_{ij}\}
	$$
Then, the constraints in \eqref{Fdef} yield $y^*_{ij} \leq R_{ij}^ud_i^u(x_i^*)$ and $y^*_{ij} \leq s_j^u(x_j^*)$, thus establishing that $f_{ij}(x^*) = y^*_{ij}$, for all $j\in\E_v^+$. 

\end{proof}

We end this subsection by considering a restriction of case \textbf{(II)}, where the demand and supply functions cannot be controlled over a subset of cells $\E^u$, i.e., $\alpha_i \equiv 1$ and $\beta_i \equiv + \infty$ if $i \in \E^u$. For simplicity, we again focus on the case where each node is either a merge or a diverge. In this case, the equilibrium selection problem under partial control can be written as 
	\begin{align}
	\nonumber						&	\min_{(x,y)\in\F}		&	\Psi(x)\\
	\label{optimizationSpatConst}		&	\quad \mathrm{s.t.}			&	y_{ij} = s_j^u(x_j), \, \,  	& 	\forall j\in\E^u, \,  \{i\} = \E_j^-, \, \, \sigma_j\textrm{ is diverge}\\
	\nonumber						& 								&	y_{ij} = d_i^u(\rho_i), \, \, &	\forall i\in\E^u, \, \{j\} = \E_i^+, \, \tau_i \textrm{ is merge}
	\end{align}

Observe that the feasible set of \eqref{optimizationSpatConst} is a subset of the feasible set of \eqref{optimization}. Therefore, in general, a solution of \eqref{optimizationSpatConst} will have a greater cost in comparison to the solution of \eqref{optimization}. 
The possibility of implementing the optimal solution $(x^*,y^*)$ of the original optimization in \eqref{optimization} using partial control is left to future study.
The constraints of the type $y_{ij} = s_j^u(x_j)$ and $y_{ij} = d_i^u(x_i)$ are convex only if $s_j^u(\cdot)$ and $d_i^u(\rho_i)$ are affine. This condition is satisfied for the standard linear demand and affine supply functions as in \eqref{eq:linearDemand} and \eqref{eq:affinesupply} respectively. The following result is the analogous of Proposition~\ref{proposition:SpeedLimitAndSupplyControl} for the partial control case. 
The proof, which relies on setting the control signals $\alpha$ and $\beta$ as in \eqref{eq:controlVariablesSpeedLimitAndSupplySaturation}, is omitted. 

\begin{proposition}
\label{proposition:SpeedLimitAndSupplySpatiallyConstrainedControl}
Consider a monotone dynamical transportation network where the demand functions $d_i^u(\cdot)$ and supply functions $s_i^u(\cdot)$, as well as the inflow vector $\lambda$ and the uncontrolled turning preference matrix $R^u$ are all time-invariant. In addition, assume that each node is either a merge or a diverge, and that demand and supply functions on uncontrolled cells are affine. Let $(x^*,y^*)$ be an optimal solution of \eqref{optimizationSpatConst}. Set time-invariant demand controls $\alpha_i$, supply controls $\beta_i$, and matrix of turning preferences $R$, as in \eqref{eq:controlVariablesSpeedLimitAndSupplySaturation}. Then $x^*$ is a stable equilibrium for the controlled dynamical transportation network.
\end{proposition}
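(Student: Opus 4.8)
The plan is to follow the template of the proof of Proposition~\ref{proposition:SpeedLimitAndSupplyControl} almost verbatim, the only new ingredient being the treatment of the uncontrolled cells in $\E^u$, where the control parameters are frozen at $\alpha_i\equiv1$, $\beta_i\equiv+\infty$ and the required matching of flows must instead be enforced by the extra equality constraints in \eqref{optimizationSpatConst}. As before, I would let $(x^*,y^*)$ be optimal for \eqref{optimizationSpatConst} and set $\alpha,\beta,R$ as in \eqref{eq:controlVariablesSpeedLimitAndSupplySaturation}; one first checks that this assignment is consistent with $\alpha_i=1$, $\beta_i=+\infty$ for $i\in\E^u$, so that the control is admissible. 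The goal is again to show that $f_{ij}(x^*)=y^*_{ij}$ for every $(i,j)$: this makes $x^*$ an equilibrium by \eqref{eq:finfout} and the balance constraints in \eqref{Fdef}, and its stability then follows immediately from point (ii) of Theorem~\ref{theo:monotonicity+contraction}.

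For a merge node $v$ with $\{j\}=\E_v^+$, I would argue that $d_i(x_i^*)=y^*_{ij}$ for every incoming $i\in\E_v^-$: if $i$ is controlled this holds by the definition of $\alpha_i$, exactly as in Proposition~\ref{proposition:SpeedLimitAndSupplyControl}, while if $i\in\E^u$ then $\alpha_i=1$ and the second equality constraint of \eqref{optimizationSpatConst} gives $y^*_{ij}=d_i^u(x_i^*)=d_i(x_i^*)$. Summing over $\E_v^-$ and using $R_{ij}=1$ together with the first constraint of \eqref{Fdef} yields $\sum_{i\in\E_v^-}R_{ij}d_i(x_i^*)=\sum_i y^*_{ij}\le s_j^u(x_j^*)=s_j(x_j^*)$, so the free-flow rule \eqref{eq:constraintFetojEquality} forces $f_{ij}(x^*)=d_i(x_i^*)=y^*_{ij}$.

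For a diverge node $v$ with $\{i\}=\E_v^-$, I would reuse the identity $f_{ij}(x)=\min\{R_{ij}d_i(x_i),s_j(x_j)\}$ established from monotonicity together with \eqref{eq:constraintFetoj} and \eqref{eq:constraintFin} in the proof of Proposition~\ref{proposition:SpeedLimitAndSupplyControl}; this identity does not involve the choice of controls and so carries over unchanged. Since $\tau_i=v$ is a diverge we have $\alpha_i=1$, hence $d_i(x_i^*)=d_i^u(x_i^*)$. For each outgoing $j$ I would then split into two cases. If $j$ is controlled, $\beta_j=y^*_{ij}$ gives $s_j(x_j^*)=\min\{s_j^u(x_j^*),y^*_{ij}\}=y^*_{ij}$ by the first constraint of \eqref{Fdef}, and since $y^*_{ij}\le R_{ij}^u d_i^u(x_i^*)$ by the second constraint, the minimum in the identity equals $y^*_{ij}$. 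If $j\in\E^u$, then $\beta_j=+\infty$, $s_j(x_j^*)=s_j^u(x_j^*)$, and the first equality constraint of \eqref{optimizationSpatConst} gives $s_j^u(x_j^*)=y^*_{ij}\le R_{ij}^u d_i^u(x_i^*)$, so again the minimum equals $y^*_{ij}$. In every case $f_{ij}(x^*)=y^*_{ij}$, which closes the argument.

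The computations are routine; the main thing to get right is the bookkeeping over the four cases (merge/diverge, controlled/uncontrolled) and, in particular, the verification that freezing $\alpha_i=1$ on the cells incoming to a diverge still leaves $R_{ij}^u d_i^u(x_i^*)\ge y^*_{ij}$, which is exactly guaranteed by the defining constraints of $\F$ in \eqref{Fdef}. I would also remark that affineness of $d_i^u,s_i^u$ on $\E^u$ is used only to make the extra constraints of \eqref{optimizationSpatConst} convex, and plays no role in the equilibrium and stability argument itself.
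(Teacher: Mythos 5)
Your proof is correct and matches the paper's intended argument: the paper omits this proof, noting only that it relies on setting the controls as in \eqref{eq:controlVariablesSpeedLimitAndSupplySaturation}, i.e., on repeating the proof of Proposition~\ref{proposition:SpeedLimitAndSupplyControl} with the extra equality constraints of \eqref{optimizationSpatConst} supplying the flow matching on the uncontrolled cells exactly as you do in your merge/diverge, controlled/uncontrolled case split. The only cosmetic point is that, to conclude $\fin_i(x^*)=\fout_i(x^*)$ at off-ramps, you should invoke (as in the proof of Proposition~\ref{proposition:turningPreferenceControl}) that the last constraint of \eqref{Fdef} is necessarily tight at an optimum because $\Psi$ is strictly increasing.
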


\subsection{Optimal control}
\label{subsec:timevaryingcontrol}

In this subsection, we study the problem of optimal control for dynamical transportation networks. We consider the general case where uncontrolled supply functions $s_i^u(\rho_i,t)$ and demand functions $d_i^u(\rho_i,t)$, as well as the inflow vector $\lambda(t)$ and the turning preference matrix $R(t)$ are Lipschitz continuous functions of time. Throughout, we shall discuss the control strategy corresponding to case \textbf{(I)}, where we allow control of turning preference matrix and speed limits. The strategy corresponding to case \textbf{(II)} of controlling speed limits and supply function, can be developed in a totally analogous way, and is therefore omitted.

The optimal control framework of this subsection can be used as a basis for model predictive control strategy for dynamical transportation networks, e.g., see \cite{HegyiTRC05}, as follows. The network state $\rho_0=\rho(t_0)$ is observed at some initial time $t_0$ and the future arrival rate $\lambda(t)$ over some interval $[t_0,t_0+H]$ is estimated, possibly using historical information. It is desired to compute control actions which can be applied in an open loop fashion from $t_0$ to $t_0+H$ (or earlier), at which point new observations and estimations are made and the process is repeated. Notice that, in standard model predictive control (MPC), the control is only applied in the interval $[t_0, t_0+H')$, for some $H'<H$, and then it is recomputed for the next horizon $[t_0+H', t_0+H'+H]$. For the sake of simplicity, and without loss of generality, in this paper we consider the case when $H' = H$. 
%The control actions are computed as follows. 
%First, consider the system at time $t_0$ and a time horizon $H>0$. Assume $\lambda(t)$ is known for $t\in[t_0,t_0+H)$, and that $\rho_0 = \rho(t_0)$ can be measured. 
%This strategy can be used as a building block for a Model Predictive Control strategy, in which the time is slotted in periods of length $H$. At each period, the state of the network is measured and using a guess of the future inputs \eqref{optimizationTimeVarying} is solved and the control is exerted in open loop on the system for a period. After a time $H$, the state of the network is measured again and the process is repeated.
Analogous to \eqref{Fdef}, let $\F_H(t_0,\rho_0)$ be the set of triple $(x(t),y(t),t)\in\RR_+^{\E}\times\RR_+^{\E\times\E}\times[0,H)$ that are continuous in $t$ and satisfy the following constraints 
	\begin{equation}
		\label{FdefHt0} 
		\begin{array}{rcl}
			\ds\sum_iy_{ij}(t)\le s_j^u(x_j(t),t)				&		&\forall j\in\E, t\in[t_0,t_0+H) \\[5pt] 
			y_{ij}(t)\le R_{ij}^u(t)d_i^u(x_i(t),t)				&		&\forall i\ne j\in\E, t\in[t_0,t_0+H) \\[5pt] 
			\ds\dot{x}_j = \sum_iy_{ij}-\ds\sum_iy_{ji}			&		&\forall j\in\E \setminus(\R \cup\R^o), 
				 t\in[t_0,t_0+H)\\[5pt] 
			\ds\dot{x}_j = \lambda_j-\sum_iy_{ji}				&		&\forall j\in\R, t\in[t_0,t_0+H) \\[5pt]
			\ds\dot{x}_j \leq \sum_i y_{ij} - d_j^u(x_j,t)		&		&\forall j\in\R^o, t\in[t_0,t_0+H)\\
			x(t_0) = \rho_0
		\end{array}
	\end{equation}
	
We consider the following optimal control problem:
	\begin{equation}
	\label{optimizationTimeVarying}
		\min_{\left(x(t),y(t),t\right)\in \F_{H}(t_0,\rho_0)}\int_{t_0}^{t_0+H}\Psi(x(s),s) \, ds
	\end{equation}
where $\Psi(\cdot,t)$ is convex and strictly increasing in each component for every $t\in[t_0, t_0+H]$. The cost function in \eqref{optimizationTimeVarying} can be chosen to penalize the transient as well as the terminal state. Possible examples are $\Psi(x(s),s) = \sum_eL_ex_e(s)$, where $L_e$ is the length of cell $e$,  representing the total volume of vehicles in the network, or, as in evacuation problems, $\Psi(x(s),s) = -\sum_{e\in\R^o}d_e(x_e(s))$ \cite{LiTCNS:14}, aiming to maximize the total outflow from the network in the given time horizon.

\begin{remark}
Problem \ref{optimizationTimeVarying} is a convex problem in a set of continuous functions, as can be easily seen using concavity of demand and supply functions, and linearity of the derivative operator. However, since the variables are taken from an infinite dimensional space, its solution is not as straightforward as in the stationary case. A simple strategy, which we use in our simulations in Section~\ref{section:numericalExample}, is to discretize the time with a small enough step size $\Delta t$. A first order discretization can be easily seen to maintain the convexity properties because expressions such as $\dot{x}_j$ are replaced with $\frac{x_j(t+\Delta t) - x_j(t)}{\Delta t}$. After discretization, the variables of the problem are $x_i(k\Delta t)$ for all $i\in\E$ and $y_{ij}(k\Delta t)$ for all $(i,j)$, and all integer $k \in [0, \frac{H}{\Delta t}-1]$. 
\end{remark}

The following results state that $\F_H(t_0,\rho_0)$ contains all the trajectories starting at $t_0$ with initial condition $\rho_0$, and conversely that there exist control signals such that a solution $\{(x^*(t), y^*(t),t): \, t\in[t_0,t_0+H)\}$ of \eqref{optimizationTimeVarying} is a feasible trajectory for the controlled dynamical transportation network. The proof is very similar to the proof of Lemma~\ref{lemma:FcontainsAllEquilibria} and Proposition~\ref{proposition:turningPreferenceControl}, and is therefore omitted. In particular, the control signals in Proposition~\ref{proposition:turningPreferenceControlTrajectories} are to be set according to the following, for all $t\in[t_0,t_0+H)$:
	\begin{equation}
		\label{eq:controlVariablesFullControlTrajectory}
		\begin{array}{rll}
			\nonumber\alpha_{i}(t) &	= 
				\begin{cases}
					\frac{\sum_{k\in\E_i^+}y_{ik}^*(t)}{d_i^u(x_i^*(t))},&	\textrm{ if } x_i^*(t) \neq 0	\\
					0,&\textrm{ if } x_i^*(t) = 0
				\end{cases}
				 &\qquad \forall i \in \E \setminus \R^o 	\\[5pt] 
			R_{ij}(t) &= 
				\begin{cases}
					\frac{y_{ij}^*(t)}{\sum_{k\in\E_i^+}y_{ik}^*(t)}, &\textrm{ if } \sum_{k\in\E_i^+}y_{ik}^*(t) \neq 0	\\
					0, & \textrm{ if } \sum_{k\in\E_i^+}y_{ik}^*(t) = 0
				\end{cases}
				&\qquad \forall i, j \in \E \setminus \R^o, \, 
			i\neq j	\\[5pt] 
		\nonumber\beta_i(t) & \equiv + \infty, &\qquad \forall i \in \E \setminus \R \, .
		\end{array}
	\end{equation}

\begin{lemma}
\label{lemma:FcontinsAllTrajectories}
Consider a dynamical transportation network where the uncontrolled demand and supply, as well as the inflow vector and the uncontrolled turning preference matrix are Lipschitz continuous functions of the time. If $\{\rho^*(t): \, t\in[t_0,t_0+H)\}$ is a trajectory of the system with initial condition $\rho^*(t_0) = \rho_0$, then $\{(\rho^*(t), f^*(t),t): \, t\in[t_0,t_0+H)\} \in \F_H(t_0,\rho_0)$, where $f^*(t)=\{f_{ij}(\rho^*(t))\}_{i,j\in\E}$.
\end{lemma}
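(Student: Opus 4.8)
The plan is to mirror the proof of Lemma~\ref{lemma:FcontainsAllEquilibria} in the time-varying dynamic setting, exhibiting the obvious candidate triple and verifying the six constraints of \eqref{FdefHt0} pointwise in $t$. Concretely, I would set $x(t)=\rho^*(t)$ and $y_{ij}(t)=f_{ij}(\rho^*(t),t)$ for $t\in[t_0,t_0+H)$. Membership in $\F_H(t_0,\rho_0)$ first requires continuity in $t$ of the triple: this is immediate, since $\rho^*(t)$ solves the ODE \eqref{eq:systfirst} and is therefore $C^1$ (the right-hand side $g$ being continuous in $t$ under the stated Lipschitz hypotheses), while each $f_{ij}$ is continuous in $(\rho,t)$, so that $f_{ij}(\rho^*(t),t)$ is continuous as a composition. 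The initial-condition constraint $x(t_0)=\rho_0$ holds by hypothesis.

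For the two pointwise inequality constraints I would invoke directly the structural constraints already imposed on the flow functions. The first constraint follows from \eqref{eq:constraintFin}: for each $j$ and each $t$, $\sum_i y_{ij}(t)=\sum_i f_{ij}(\rho^*(t),t)\le s_j(\rho^*_j(t),t)\le s_j^u(\rho^*_j(t),t)$, where the last step uses $s_j=\min\{s_j^u,\beta_j\}\le s_j^u$ (with equality in case \textbf{(I)}, where supply is not actuated). Likewise, the second constraint follows from \eqref{eq:constraintFetoj} together with the admissibility relation $R_{ij}(t)d_i(\rho_i,t)\le R^u_{ij}(t)d^u_i(\rho_i,t)$ built into case \textbf{(I)}: $y_{ij}(t)=f_{ij}(\rho^*(t),t)\le R_{ij}(t)d_i(\rho^*_i(t),t)\le R^u_{ij}(t)d^u_i(\rho^*_i(t),t)$.

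The three differential constraints follow by substituting the flow bookkeeping \eqref{eq:finfout} into the mass-conservation law \eqref{eq:systfirst}, evaluated along $\rho^*$. For an internal cell $j\in\E\setminus(\R\cup\R^o)$, using $\fin_j=\sum_i f_{ij}$ and $\fout_j=\sum_i f_{ji}$ yields $\dot x_j=\sum_i y_{ij}-\sum_i y_{ji}$, i.e., the third constraint as an equality; for an on-ramp $j\in\R$, using $\fin_j=\lambda_j$ gives $\dot x_j=\lambda_j-\sum_i y_{ji}$, the fourth constraint. The off-ramp constraint is the only one deserving care, precisely because it is stated as an inequality: there $\fout_j=d_j(\rho^*_j,t)$, so $\dot x_j=\sum_i y_{ij}-d_j(\rho^*_j,t)$, and since off-ramp demand is not actuated (i.e., $d_j\equiv d^u_j$ for $j\in\R^o$, consistently with the control parametrizations, which leave $\alpha_i$ undefined for $i\in\R^o$), this reduces to $\dot x_j=\sum_i y_{ij}-d^u_j(\rho^*_j,t)$, satisfying the fifth constraint with equality.

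I do not expect a substantive obstacle: every constraint is a pointwise-in-$t$ restatement of a constraint already imposed on the flow functions, or of the defining ODE, so the argument is essentially a transcription of the equilibrium proof with the stationarity conditions replaced by the mass-balance equations. The one point requiring attention---and the reason the off-ramp constraint is an inequality rather than an equality---is the sign bookkeeping on $\R^o$: one must use that off-ramp demand is uncontrolled so that $d_j=d_j^u$, since a strict reduction $d_j<d_j^u$ would make $\dot x_j$ exceed $\sum_i y_{ij}-d_j^u$ and thereby violate the constraint.
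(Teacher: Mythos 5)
Your proof is correct and follows exactly the route the paper intends: the paper omits this proof, noting only that it is ``very similar'' to that of Lemma~\ref{lemma:FcontainsAllEquilibria}, and your argument is precisely that transcription, verifying the constraints of \eqref{FdefHt0} pointwise in $t$ from \eqref{eq:constraintFin}, \eqref{eq:constraintFetoj}, the admissibility relations $s_j\le s_j^u$ and $R_{ij}d_i\le R_{ij}^u d_i^u$, and the mass-balance \eqref{eq:systfirst}--\eqref{eq:finfout}. Your observation that the off-ramp inequality constraint relies on off-ramp demand being uncontrolled ($\alpha_i$ is only defined for $i\in\E\setminus\R^o$) is a correct and worthwhile point of care.
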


\begin{proposition}
\label{proposition:turningPreferenceControlTrajectories}
Consider a dynamical transportation network where the uncontrolled demand and supply, as well as the inflow vector and the uncontrolled turning preference matrix are Lipschitz continuous functions of the time. Let $\{(x^*(t),y^*(t),t): \, t\in[t_0,t_0+H)\}$ be a solution of the optimization \eqref{optimizationTimeVarying}. Set the piecewise continuous time-varying demand controls $\alpha_i(t)$, controlled turning preference matrix $R(t)$, and supply controls $\beta_i(t)$%\in\RR_+$
, as in \eqref{eq:controlVariablesFullControlTrajectory}. Then $\alpha_i(t)R_{ij}(t)\le R^u_{ij}(t)$  for all $i,j$, and $\{x^*(t): \, t\in[t_0,t_0+H)\}$ is a trajectory in free-flow for the controlled dynamical transportation network. 
\end{proposition}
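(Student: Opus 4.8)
The plan is to mirror the equilibrium argument of Proposition~\ref{proposition:turningPreferenceControl}, replacing the static balance $\fin_i=\fout_i$ by the differential constraints defining $\F_H(t_0,\rho_0)$ in \eqref{FdefHt0}. Fix an optimal triple $\{(x^*(t),y^*(t),t)\}$ and define the controls through \eqref{eq:controlVariablesFullControlTrajectory}. The first claim, $\alpha_i(t)R_{ij}(t)\le R^u_{ij}(t)$, is a direct computation: on the set where $x_i^*(t)\neq0$ and $\sum_{k\in\E_i^+}y_{ik}^*(t)\neq0$ the two factors cancel to $\alpha_i(t)R_{ij}(t)=y_{ij}^*(t)/d_i^u(x_i^*(t),t)$, which is bounded by $R^u_{ij}(t)$ because of the second constraint in \eqref{FdefHt0}; on the complementary degenerate set one of the two factors is set to zero, so the bound is trivial. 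The same cancellation gives the identity $R_{ij}(t)d_i(x_i^*(t),t)=\alpha_i(t)R_{ij}(t)d_i^u(x_i^*(t),t)=y_{ij}^*(t)$, which I will use throughout.

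Next I would show that the controlled network remains in free-flow along $x^*$, i.e. $f_{ij}(x^*(t),t)=R_{ij}(t)d_i(x_i^*(t),t)=y_{ij}^*(t)$ for all $i,j$ and all $t$. Since $\beta_i(t)\equiv+\infty$, the supply is uncontrolled, so $s_j(x_j^*,t)=s_j^u(x_j^*,t)$. Summing the identity above over the cells entering a fixed node $v$ and invoking the first constraint in \eqref{FdefHt0} gives, for every $j\in\E_v^+$,
\[
\sum_{k}R_{kj}(t)\,d_k(x_k^*(t),t)
=\sum_{k\in\E_j^-}y_{kj}^*(t)
\le s_j^u(x_j^*(t),t)
= s_j(x_j^*(t),t).
\]
This is precisely the hypothesis of the free-flow rule \eqref{eq:constraintFetojEquality} at node $v$, which therefore forces $f_{ij}(x^*(t),t)=R_{ij}(t)d_i(x_i^*(t),t)=y_{ij}^*(t)$ for every $i\in\E_v^-$. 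Letting $v$ range over all nodes yields $f_{ij}(x^*(t),t)=y_{ij}^*(t)$ identically, so $x^*$ is in free-flow.

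It then remains to verify that $x^*$ solves the controlled conservation law \eqref{eq:systfirst}--\eqref{eq:finfout}. Substituting $f_{ij}(x^*)=y_{ij}^*$ into \eqref{eq:finfout}, the net rate of change of every interior cell and every on-ramp reduces to $\sum_i y_{ij}^*-\sum_i y_{ji}^*$ and $\lambda_j-\sum_i y_{ji}^*$ respectively, which coincide term by term with the \emph{equality} differential constraints of \eqref{FdefHt0}; hence $\dot x_j^*=g_j(x^*(t),t)$ on those cells. The one place where \eqref{FdefHt0} carries an inequality rather than an equality is the off-ramp balance $\dot x_j\le\sum_i y_{ij}-d_j^u(x_j,t)$, and I expect this to be the main obstacle. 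The argument I would use is the trajectory analogue of the off-ramp step in Proposition~\ref{proposition:turningPreferenceControl}: an off-ramp density $x_j^*$, $j\in\R^o$, appears in no other constraint of \eqref{FdefHt0} (off-ramps have no downstream cells) and enters the objective $\int\Psi$ monotonically, so by strict monotonicity of $\Psi$ the optimizer must drive this inequality to be active; otherwise one could perturb $x_j^*$ on a set of positive measure to strictly decrease the cost while remaining feasible, contradicting optimality. Activeness gives $\dot x_j^*=\sum_i y_{ij}^*-d_j^u(x_j^*,t)=\fin_j(x^*,t)-d_j(x_j^*,t)$, which is the genuine off-ramp dynamics and completes the verification.

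The remaining points are routine. Admissibility $\alpha_i(t)\in[0,1]$ follows because summing the second constraint of \eqref{FdefHt0} over $k$ and using that the rows of $R^u$ sum to one yields $\sum_{k\in\E_i^+}y_{ik}^*\le d_i^u(x_i^*)$. Piecewise continuity of the controls follows from the continuity of $(x^*,y^*)$ in $t$: the ratios in \eqref{eq:controlVariablesFullControlTrajectory} are continuous off the zero-denominator set, where the degenerate definitions take over. This statement is thus the converse of Lemma~\ref{lemma:FcontinsAllTrajectories}, exhibiting explicit controls that realize the optimizer of \eqref{optimizationTimeVarying} as a free-flow trajectory of the controlled network.
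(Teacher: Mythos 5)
The paper does not actually supply a proof of this proposition --- it declares the argument ``very similar'' to those of Lemma~\ref{lemma:FcontainsAllEquilibria} and Proposition~\ref{proposition:turningPreferenceControl} and omits it --- so you are reconstructing an omitted proof rather than matching a written one. Most of your reconstruction is the intended adaptation and is correct: the cancellation giving $\alpha_i(t)R_{ij}(t)d_i^u(x_i^*(t),t)=y_{ij}^*(t)$ and the bound $\alpha_iR_{ij}\le R^u_{ij}$ via the second constraint of \eqref{FdefHt0}, the free-flow verification by summing over $\E_v^-$ and invoking \eqref{eq:constraintFetojEquality}, the admissibility $\alpha_i\in[0,1]$ from the row sums of $R^u$, and the matching of the equality differential constraints with \eqref{eq:finfout} on interior cells and on-ramps.

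The gap is exactly where you anticipated it, but your resolution is wrong: the perturbation argument that forces activeness in the static case does not transfer, because the sign of the demand term is reversed. In \eqref{Fdef} the off-ramp constraint is $\sum_i y_{ij}\le d_j^u(x_j)$, so \emph{decreasing} $x_j$ \emph{tightens} it, and strict monotonicity of $\Psi$ forces the constraint to be active at the optimum. In \eqref{FdefHt0} the constraint is $\dot x_j\le\sum_i y_{ij}-d_j^u(x_j,t)$, so decreasing $x_j$ \emph{increases} the right-hand side and \emph{relaxes} the constraint (it also relaxes the supply constraint $\sum_iy_{ij}\le s_j^u(x_j)$, the only other place $x_j$ appears). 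Hence the optimizer is pushed toward $x_j^*\equiv 0$, not toward activeness. Concretely: for a single off-ramp with $d_j^u(x)=x$, constant inflow $\sum_iy_{ij}^*=F>0$ and $x_j(t_0)=0$, the profile $x_j\equiv0$ is feasible ($0\le F$) and strictly cheaper than the ODE solution $F(1-e^{-(t-t_0)})$, yet it does not satisfy $\dot x_j=F-x_j$. So the optimal $x_j^*$ on off-ramps need not obey the controlled dynamics, and your claim that ``activeness gives $\dot x_j^*=\fin_j-d_j(x_j^*,t)$'' cannot be derived from optimality. Closing this requires changing the argument (or the formulation): e.g., imposing the off-ramp balance as an equality in $\F_H$ (convex only for affine $d_j^u$), or restricting the claim to the components in $\E\setminus\R^o$ and defining the realized off-ramp densities as the solutions of the actual ODE driven by the inflows $y^*$.
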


Finally, optimal control over a fixed time horizon can be easily recast as periodic trajectory selection. Indeed, let inflows and turning preference matrix be periodic of period $T$, i.e., $\lambda(t) = \lambda(t+T)$ and $R(t+T) = R(t)$ for all $t\geq 0$. In this case, let $\F_T$ be given as in \eqref{FdefHt0} with $t_0 = 0$, $H = T$ and where the last equality constraint is replaced with $x(0) = x(T)$. The periodic trajectory selection problem can be formally defined in the same way as \eqref{optimizationTimeVarying}, and a solution is again  provided by Proposition~\ref{proposition:turningPreferenceControlTrajectories}, setting $t_0 = 0$ and $H = T$. In this case, the solution is a periodic trajectory for the controlled system with period $T$, whose stability can be studied using the tools provided by Proposition~\ref{prop:GAS}. The difference between the two cases is that while optimal control is a on-line feedback strategy that relies on measurements of the actual state of the network to compute the controls, the periodic trajectory selection problem can be solved off-line and the corresponding controls can be applied in open-loop. As such, it can be seen as an appealing solution to find optimal controls on the basis of periodic daily or weekly data, as an alternative to standard strategies in which each day is partitioned into different time periods, such as the classical night-commuting-afternoon-commuting cycle, for each of which static controls are computed.

\section{Simulation Studies}
\label{section:numericalExample}

In this section, we illustrate the theoretical findings of Sections~\ref{sec:stability} and \ref{sec:control} with simulation studies performed on a transportation network loosely inspired by the freeway system in the southern region of Los Angeles. In particular, the network that we chose consists of the state routes 91 and 46, and interstate highways I-110, I-710 and I-405, as shown in Fig.~\ref{fig:Map}. Along with the main lines of the freeway system, the network consists of several on- and off-ramps, e.g., see the right panel of Fig.~\ref{fig:Map}. We consider only a subset of actual ramps for the sake of illustration. 

Every combination of on- and off-ramp is represented by three cells: the two ramps, and a section of the main line between the ramps. Consistent with the real network, the off-ramp is always located before the on-ramp. For example, the on- and off-ramps of I-405 at Carson St are represented by the cells 57-84-58 in the northbound direction, and 59-85-60 in the southbound direction. In addition to actual on- and off-ramps, sections of the main lines that arrive to or depart from the transportation network under study are also considered as on- and off-ramps, respectively. For example, cells 1 and 12, though being part of the mainline of I-110, will be considered as on- and off-ramps, respectively. Finally, interconnections between main lines are represented by a set of merge and diverge nodes. For example, the interconnection between state route 91 and I-110 is represented by the tail and head nodes of cell 69. Overall, the network under study consists of 91 cells, including 22 on-ramps and 22 off-ramps.

\begin{figure}[htb!]
\begin{center}
\begin{tabular}{lr}
\includegraphics[height=9.5cm]{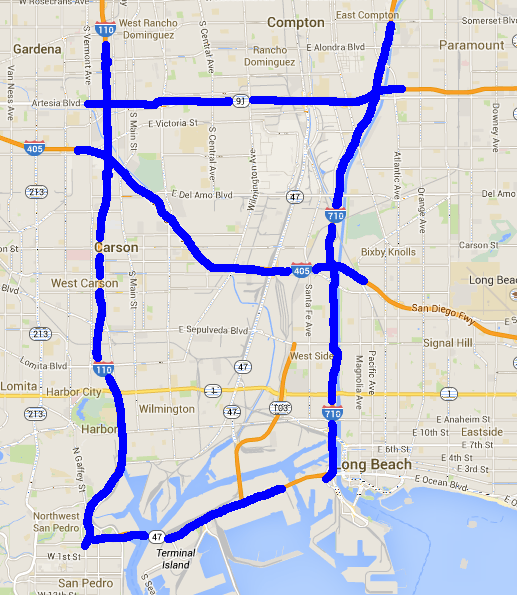}&
\includegraphics[height=9.5cm]{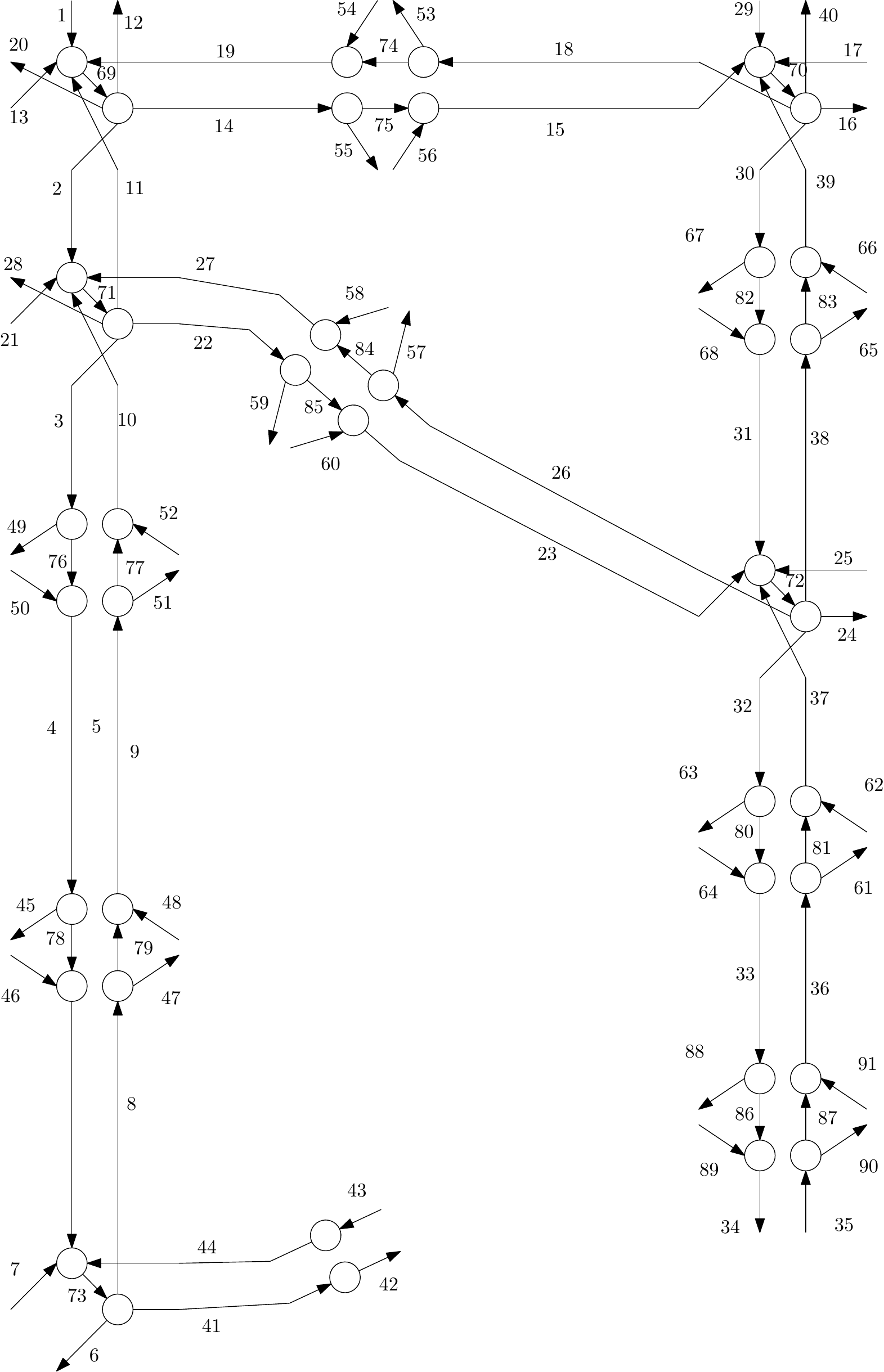}
\end{tabular}
\end{center}
\caption{Left: map of the area of interest in the southern Los Angeles region, with the portions of state routes and interstate freeways that are used for our simulation study, shown in blue. Right: the corresponding directed graph representation.}
\label{fig:Map}
\end{figure}

\begin{remark}
As mentioned in Remark~\ref{rem:densityVSvolume}, in this section we interpret the state $\rho_e(t)$ of the cell $e$ as the volume of vehicles on $e$ at time $t$, rather then its density. As such, demand and supply functions need slight modifications. In particular, the network parameters are selected as follows. For every cell, we use time-invariant linear demand functions $d_e(\rho_e) = \frac{v_e}{L_e}\rho_e$ and affine supply functions $s_e(\rho_e) = \frac{w_e}{L_e}(\rhomax_e-\rho_e)$, where $L_e$ is the length, $v_e$ is the free-flow speed, $w_e$ is the wave-speed, and $\rhomax_e$ is the jam volume of cell $e$. The values of these parameters are adapted from the PeMS website~\cite{pems}, and summarized in Table~\ref{table:values}.
\end{remark}

\begin{table}
\begin{center}
	\begin{tabular}{c|c|c|c|c}
	Type of cell 											& $L$	 	& $v$ 			&	$w$			& $\rhomax/L$	\\
	\hline
	Main line								&	2 mi			&	65 mph	 	& 13 mph	 	& 200		veh/mi					\\
	Intersections of main lines 			& 0.2 mi 			&	65 mph	 	& 13 mph	 	& 500		veh/mi					\\
	Segments between ramps on main lines 	& 0.5 mi			&	65 mph	 	& 13 mph	 	& 200		veh/mi					\\
	On- and off-ramps 						& 0.5 mi			&	25 mph	 	& 13 mph	 	& $+\infty$/200 veh/mi		\\		
	\end{tabular}
\end{center}
\caption{Values of lengths, free-flow speed, wave speed and jam density for different types of cells used in the simulation.}
\label{table:values}
\end{table}

We let the inflow on actual on-ramps be $\lambda = 2$ vehicles per minute, and that on-ramps corresponding to the main lines entering from the external world be $\lambda = 20$ vehicles per minute. The turning preference matrix is also time-invariant, and 
for every node with multiple outgoing cells, it is chosen so that  $R_{ij}=0.1$ if $j$ is an actual off-ramp, and $1-R_{ij}$ is split uniformly between the remaining outgoing cells. For example, $R_{69,2} = R_{69,12} = R_{69,14} = R_{69,20} = 0.25$, whereas $R_{18,53} = 0.1$ and $R_{18,74} = 0.9$.

Finally, the continuous-time system \eqref{eq:systfirst} is discretized with a first-order Euler method with step size $\Delta t = 10$ seconds, which is sufficiently small to satisfy the Courant-Friedrichs-Lewy condition $\max_e\frac{v_e\Delta t}{L_e}  \approx 0.9 \leq 1$ \cite{WorkAMRX10, LeVeque:92}.

\subsection{Stability of free-flow equilibrium and response to traffic incidents}
\label{subsect:stabilityMixed}

We first report results to illustrate the stability of dynamical transportation networks under non-monotone policies. 
In particular, we compare the performance under the mixture model from Example~\ref{example:mixture} for (i) $\theta=0$, which corresponds to the non-FIFO policy in Example~\ref{example:policiesNonFIFO}; (ii) $\theta=1$, which corresponds to the FIFO policy in Example~\ref{example:policiesFIFO}; and (iii) $\theta=0.8$. We run two sets of simulations: in the first set, we investigate the effect of $\theta$ on stability of equilibrium $\rho^*$, as suggested by Remark~\ref{remark:nonmonotone}, and in the second set, we investigate the ability of the network to respond to traffic incidents for different $\theta$.

\subsubsection{Stability of free-flow equilibrium under non-monotone policies}
\label{subsubsec:stability}

One can show that, independent of $\theta \in [0,1]$, there exists a free-flow equilibrium $\rho^*$ with $\rho^*_e = d_e^{-1}(f_e^*)$ with $f^* = (I-R^T)^{-1}\lambda$.

In order to investigate stability of $\rho^*$, we consider trajectories starting from $\rho(0)=\zerobf$, $\rho(0)=3 \rho^*$, and $\rho(0)= \rhomax$, except $\rho_e(0) = 100$ vehicles if $e$ is an on-ramp. The results are shown in Fig.~\ref{fig:ComparisonnonFIFODiffInCondFreeFlowSmallInCond} and Fig.~\ref{fig:ComparisonL1NormsDifferencesDifferentInitCondFreeflow}. In Fig.~\ref{fig:ComparisonnonFIFODiffInCondFreeFlowSmallInCond}, we show the evolution of the volumes of vehicles, $\rho_e$, only on cells $1$, $27$, $54$ and $84$ (for the sake of brevity in presentation) for $\theta=0$, i.e., under the non-FIFO policy in Example~\ref{example:policiesNonFIFO}. These results illustrate the global asymptotic stability result of Theorem~\ref{theo:freeflow}. 

In Fig.~\ref{fig:ComparisonL1NormsDifferencesDifferentInitCondFreeflow}, we show the evolution of $\|\rho(t)-\rho^*\|_1$ under non-FIFO and FIFO policies, i.e., for $\theta = 0$ and $\theta = 1$, comparing evolutions with initial conditions $\rho(0)=\zerobf$ and $\rho(0)=3 \rho^*$ (left panel), and $\rho(0)=\zerobf$ and $\rho(0)= \rhomax$ (right panel). These results illustrate Remark~\ref{remark:nonmonotone}. Indeed, while in the former case the network under both policies converges to the free-flow equilibrium, thus illustrating global asymptotic stability under non-monotone policies, in the latter the large initial condition prevents the non-monotone FIFO policy to steer the network to equilibrium. In fact, on the contrary, the trajectory of the system under FIFO policy grows unbounded, thus numerically showing that non-monotone policies cannot guarantee global asymptotic stability of the free-flow equilibrium for general networks, as already discussed in Example~\ref{example:counterexampleGASFIFO}.

\begin{figure}[htb!]
\centering 
\includegraphics[height=7cm]{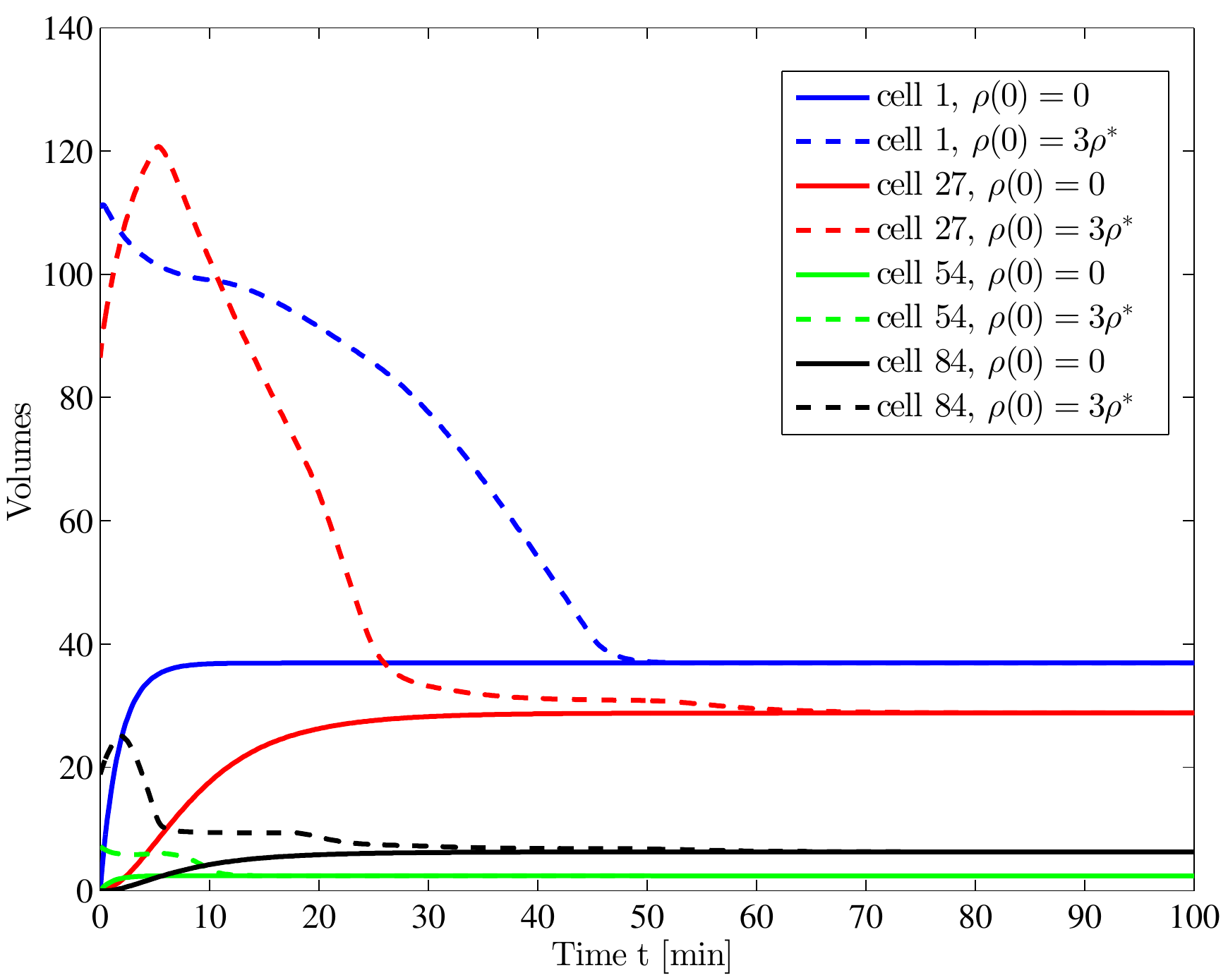}
\caption{Evolution of the trajectories of the dynamical transportation network starting from $\rho(0)=\zerobf$ (solid), and $\rho(0)=3 \rho^*$ (dashed), where $\rho^*$ is the free-flow equilibrium, under the non-FIFO policy in Example~\ref{example:policiesNonFIFO}. For brevity, we show evolution of volumes of vehicles only on cell $1$, $27$, $54$ and $84$. 
}

\label{fig:ComparisonnonFIFODiffInCondFreeFlowSmallInCond}
\end{figure}

\begin{figure}[htb!]
\centering 
\begin{tabular}{cc}
\includegraphics[height=6cm]{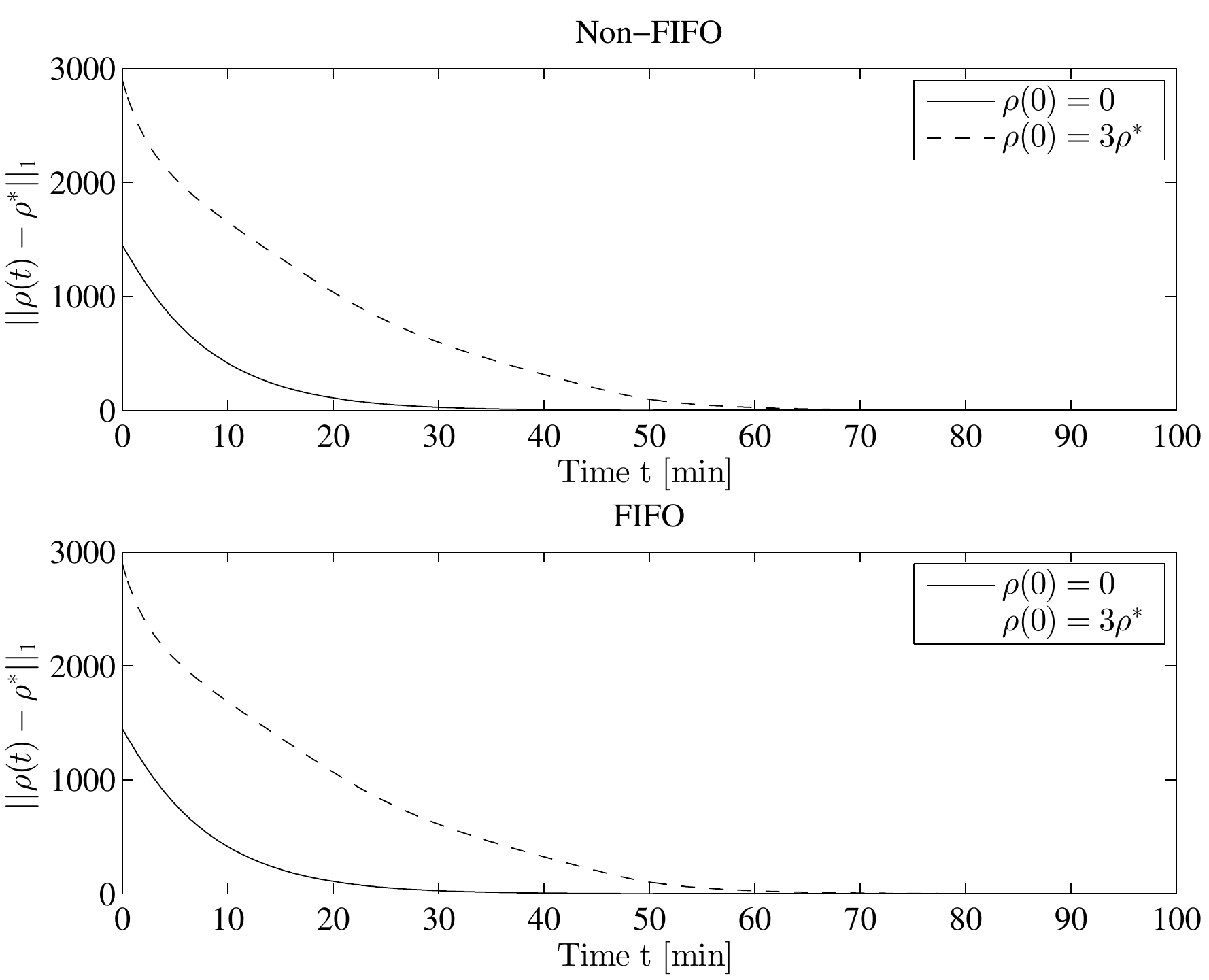}&
\includegraphics[height=6cm]{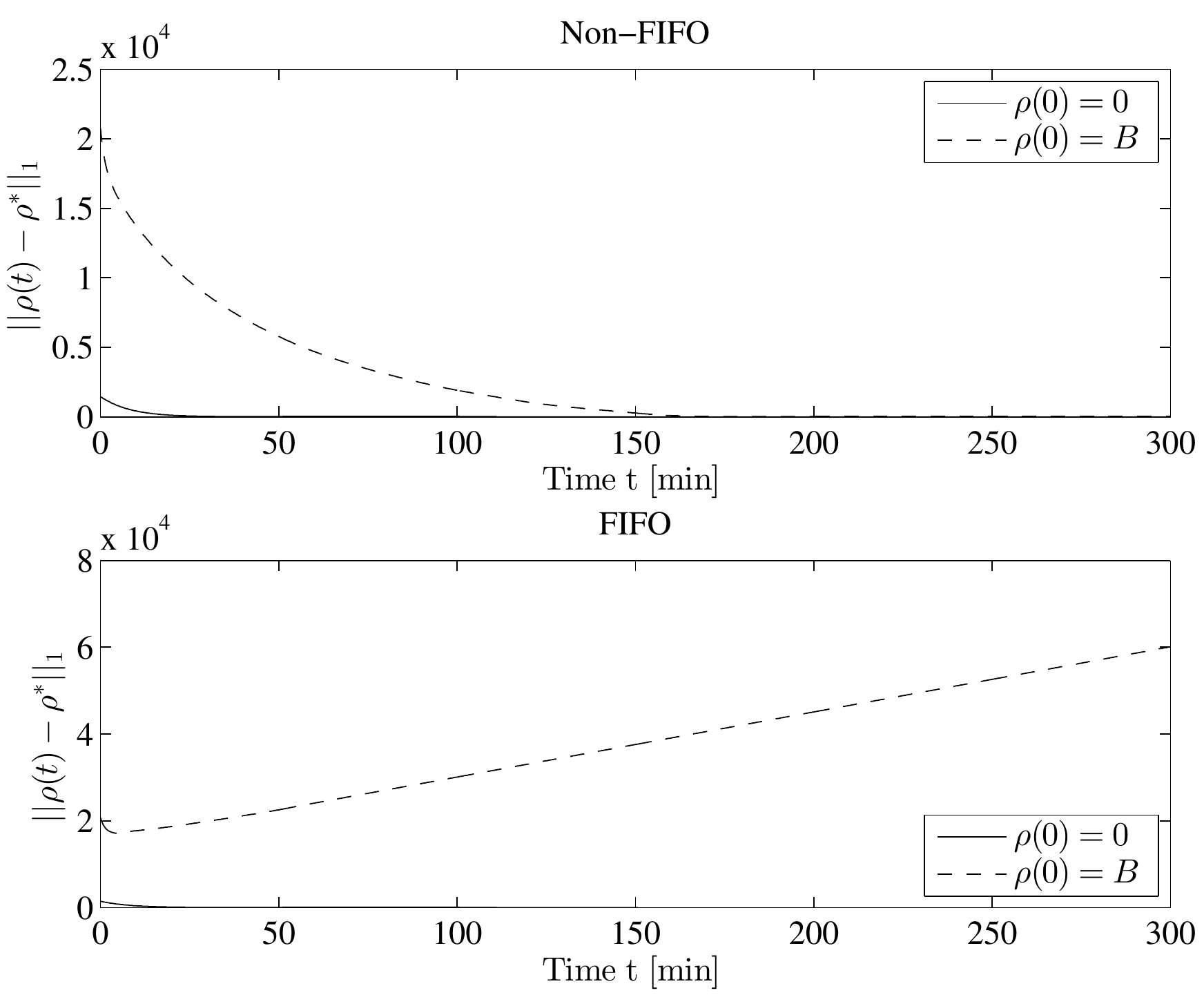}
\end{tabular}
\caption{Evolution of $\|\rho(t)-\rho^*\|_1$ for the dynamical transportation network under $\theta=0$ (top) and $\theta=1$ (bottom), for initial condition $\rho(0) = 3\rho^*$ (left panel) and $\rho(0) = \rhomax$, except $\rho_e(0) = 100$ vehicles if $e$ is an on-ramp (right panel).
}
\label{fig:ComparisonL1NormsDifferencesDifferentInitCondFreeflow}
\end{figure}

\subsubsection{Response to traffic incidents}
\label{subsubsec:response}
We considered a congestion scenario in which a bottleneck is present since $t = 0$ on cell $27$, modeled by reduction of the free-flow speed from $v_{27} = 65$ mph to $v_{27} = 4$ mph. As a consequence, the flow capacity on cell $27$ drops to $C_{27} = 10$ vehicles per minute. We plot the resulting evolution of $\sum_{e\in\E}\rho_e(t)$, i.e., the total number of vehicles in the system for $\theta=0$, $\theta=1$ and $\theta=0.8$ for initial condition $\rho(0)=\zerobf$, in Fig.~\ref{fig:ComparisonMixing09Sum}. In Fig.~\ref{fig:ComparisonL1NormsDifferencesDifferentInitCond}, we plot the evolution of $\|\rho_1(t)-\rho_2(t)\|_1$ where $\rho_1(t)$ and $\rho_2(t)$ are the evolutions for initial conditions $\rho_1(0) = \zerobf$ and $\rho_2(0) = \rhomax$, except $\rho_{2,e}(0) = 100$ vehicles if $e$ is an on-ramp.

\begin{figure}[htb!]
\centering 
\includegraphics[height=7cm]{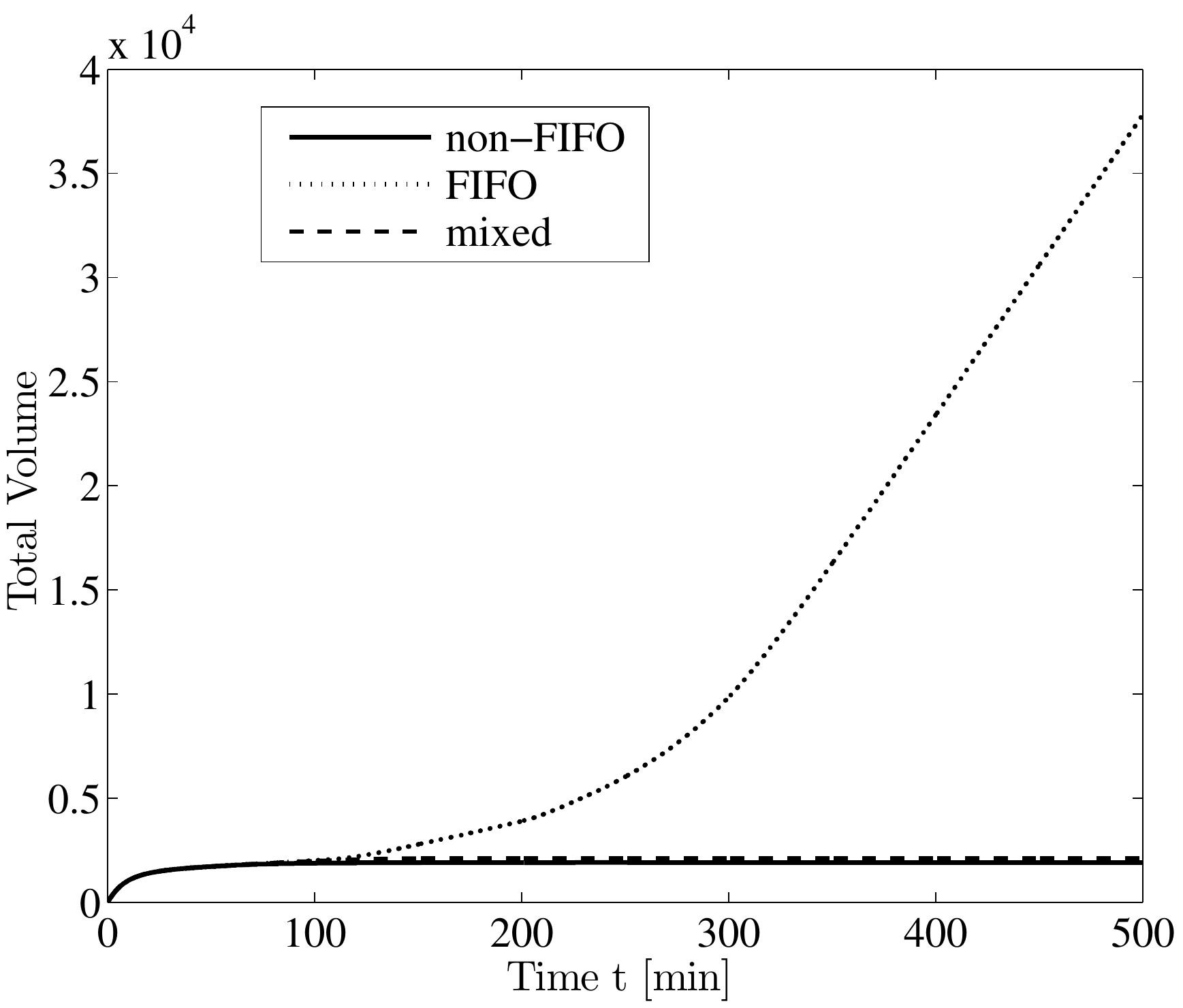}
\caption{Evolution of the total number of vehicles, $\sum_{e\in\E}\rho_e(t)$ after the introduction of bottleneck on cell $27$ at $t=0$, causing its capacity to drop to $C_{27} = 10$ vehicles per minute, for $\theta=0$ (solid), $\theta=1$ (dotted) and $\theta=0.8$ (dashed). All the trajectories start from $\rho(0)=\zerobf$.}
\label{fig:ComparisonMixing09Sum}
\end{figure}

\begin{figure}[htb!]
\centering 
\includegraphics[height=7cm]{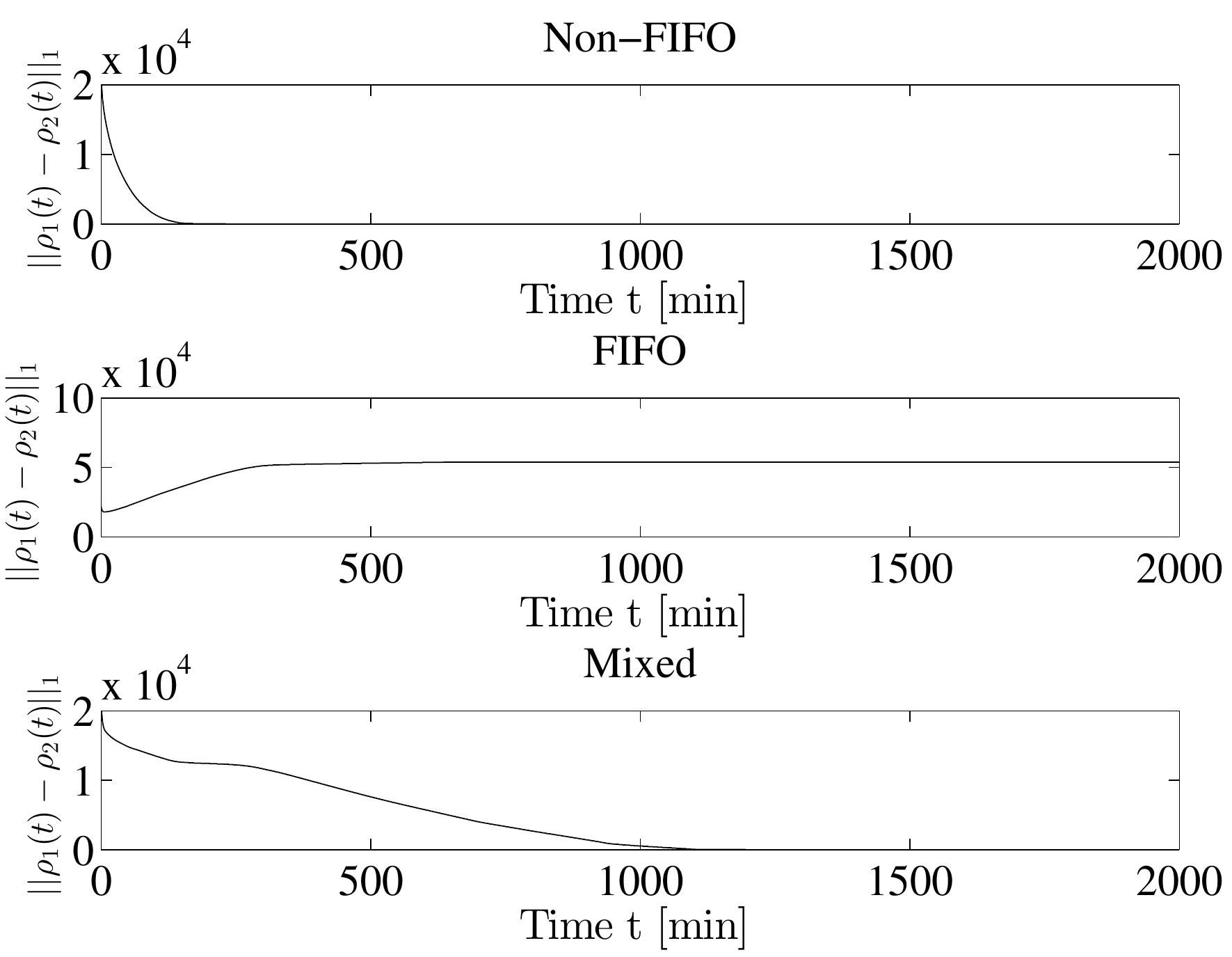}
\caption{Evolution of $\|\rho_1(t)-\rho_2(t)\|_1$ for the dynamical transportation network under $\theta=0$ (top), $\theta = 1$ (middle), and $\theta=0.8$ (bottom), where $\rho_1(t)$ and $\rho_2(t)$ are the evolutions of the volumes of vehicles with initial conditions $\rho_1(0) = \zerobf$ and $\rho_2(0) = \rhomax$, except $\rho_{2,e}(0) = 100$ vehicles if $e$ is an on-ramp, respectively.
%$\ell_1$ norm of the difference between the evolutions of the systems starting with empty or fully congested networks under pure non-FIFO policy (upper panel), pure FIFO policy (central panel) and mixed policy (lower panel) with $\theta = 0.8$. Despite different initial conditions, trajectories converge to the same equiilbrium under non-FIFO and mixed policies. The persisting difference between the trajectories under FIFO policy is also due to the fact that under both initial conditions the system is diverging.
}
\label{fig:ComparisonL1NormsDifferencesDifferentInitCond}
\end{figure}

First of all, under non-FIFO policy, it can be seen that the bottleneck on cell $27$ causes the cells $27$ and $84$ to be congested at equilibrium, namely, their inflows to be bounded by their supplies. Congestion is however limited to these cells and does not spread in the rest of the network. This phenomenon is due to the fact that under such a policy vehicles that cannot proceed along the preferred path are rerouted towards off-ramps or other freeways. In this case, vehicles on cell $26$ are rerouted to off-ramp $57$ instead of entering the congested cell $84$. The network therefore does not become unstable due to the bottleneck, and the system reaches a new non free-flow equilibrium. Since it can be shown by inspection that the dual graph is rooted, such a non free-flow equilibrium is globally asymptotically stable by Proposition~\ref{prop:GAS}.

Conversely, under FIFO policy vehicles are constrained to follow the turning preference matrix and to form queues when they encounter congestion. Since cell $27$ cannot accept the flow prescribed by the turning preference matrix, which is given by $f^*_{27} = [(I-R^T)^{-1}\lambda]_{27} \approx 15 > 10 = \fmax_{27}$ vehicles per minute, congestion spills back upstream until it reaches the on-ramps, on which vehicles queue up unbounded, see Figure~\ref{fig:ComparisonMixing09Sum}. 

Finally, although most of the vehicles tend to follow the fixed turning preference matrix, the system is stable under mixed policy too, and moreover both evolutions starting with empty and congested networks reach the same equilibrium (lower panel, Figure~\ref{fig:ComparisonL1NormsDifferencesDifferentInitCond}). The consequence of the reduced flexibility with respect to the pure non-FIFO policy can be finally observed in terms of total volume of vehicles in the network at equilibrium, which is higher in case of mixed policies, see Figure~\ref{fig:ComparisonMixing09Sum}.

\subsection{Equilibrium selection}

We now report simulation results to illustrate the findings from Section~\ref{subsec:equilibriumcontrol}. We consider the same setup as in Section~\ref{subsubsec:response} involving introduction of bottleneck on cell $27$ at $t=0$. In the present case, we consider the possibility of introducing controls in response to this incident. We consider controls of type \textbf{(I)}. The cost function is assumed to be $\Psi(\rho) = \sum_{e\in\E}\rho_e$, i.e., the total number of vehicles at the new equilibrium. The evolution of the system under controlled and uncontrolled cases is shown in Fig.~\ref{fig:ImprovementEquilibriumLongBeach}. 

By inspecting the optimum control signals, it can be seen that $R_{72, 36} = 0$, namely, the optimal control prevents vehicles from taking the $26$-$84$-$27$ branch of I-405, consequently avoiding the bottleneck on cell $27$. Fig.~\ref{fig:ImprovementEquilibriumLongBeach} shows improvements in network performance at the new equilibrium after the traffic incident, under the proposed control in comparison to the uncontrolled case. In particular, the controlled equilibrium does not exhibit high congested volumes of vehicles and its asymptotic total volume is reduced by a factor of around $4$ with respect to the uncontrolled equilibrium.

\begin{figure}[htb!]
\centering 
\begin{tabular}{lr}
\includegraphics[height=5cm]{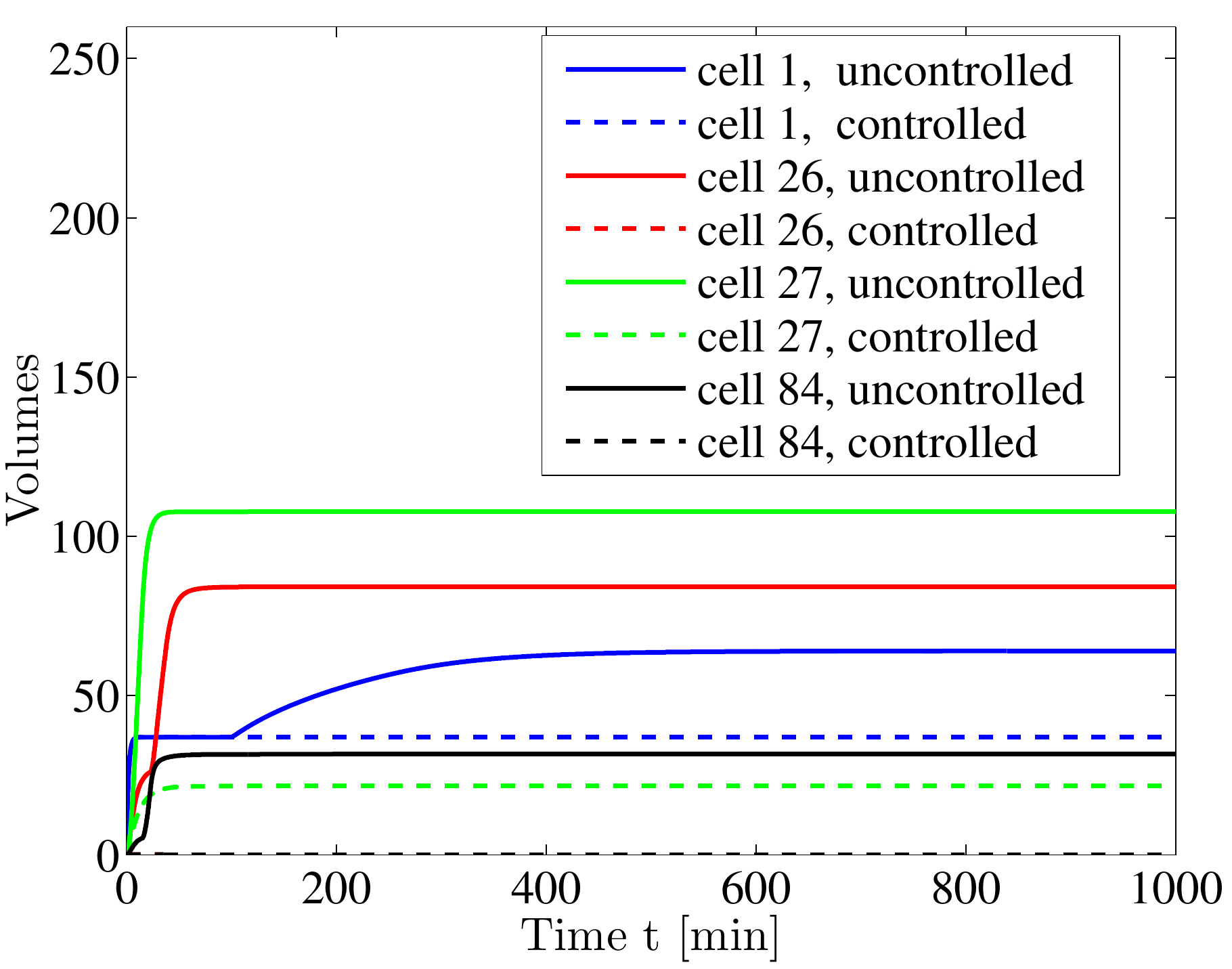}&
\includegraphics[height=5cm]{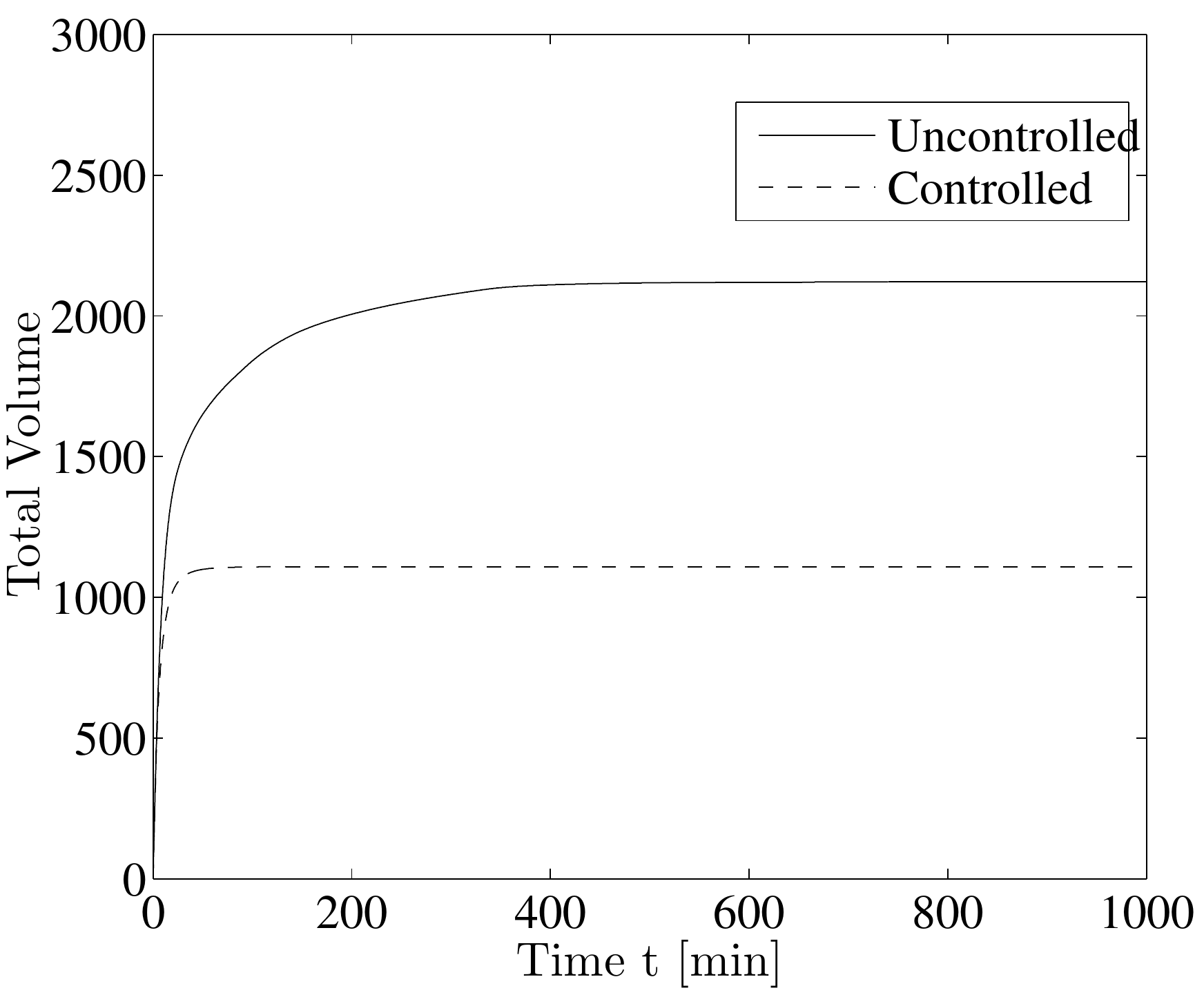}
\end{tabular}
\caption{Simulation with bottleneck on cell $27$ and corresponding capacity drop to $C_{27} = 8$. Left panel: trajectory of the uncontrolled (solid line) and uncontrolled (dashed line) volumes of vehicles on cells $1$, $26$, $27$, $84$ with initial condition $\rho(0) = 0$. Right panel: trajectory of the total volume of vehicles in the network.}
\label{fig:ImprovementEquilibriumLongBeach}
\end{figure}

\subsection{Optimal control}

We now report simulation results to illustrate the findings from Section~\ref{subsec:timevaryingcontrol}. We consider the setup of Section~\ref{subsubsec:stability}, where there is no bottleneck on cell $27$. We consider the evolution of system trajectory starting from initial condition $\rho_e(0)=\rhomax/2$ vehicles for all $e \in \E$, except $\rho_e(0) = 50$ vehicles if $e$ is an on-ramp, and solve the optimal control problem in \eqref{optimizationTimeVarying} for $H=5$ minutes, with the cost function being $\sum_{e\in\E}\int_0^H \rho_e(s) \, ds$.

The solution of this optimization is used to control the system as follows. At $t=0$, the control is computed for the horizon $[0, H]$, and executed over $[0, H]$. At time $H$, the optimization is solved again to compute control over $[H,2H]$. The procedure is repeated over a period of $3$ hours, namely, up to $[35H, 36H]$. Increasing the horizon to, say $H = 30$ minutes, would yield better performance, but it would also increase the computational cost of finding the optimal control.
The evolution of the cost for the trajectory for uncontrolled and controlled systems is shown in Fig.~\ref{fig:ImprovementTrajectoryLongBeach}. As in the case of equilibrium selection, the controlled system performs significantly better than the uncontrolled one.

\begin{figure}
\centering 
\begin{tabular}{lr}
\includegraphics[height=5cm]{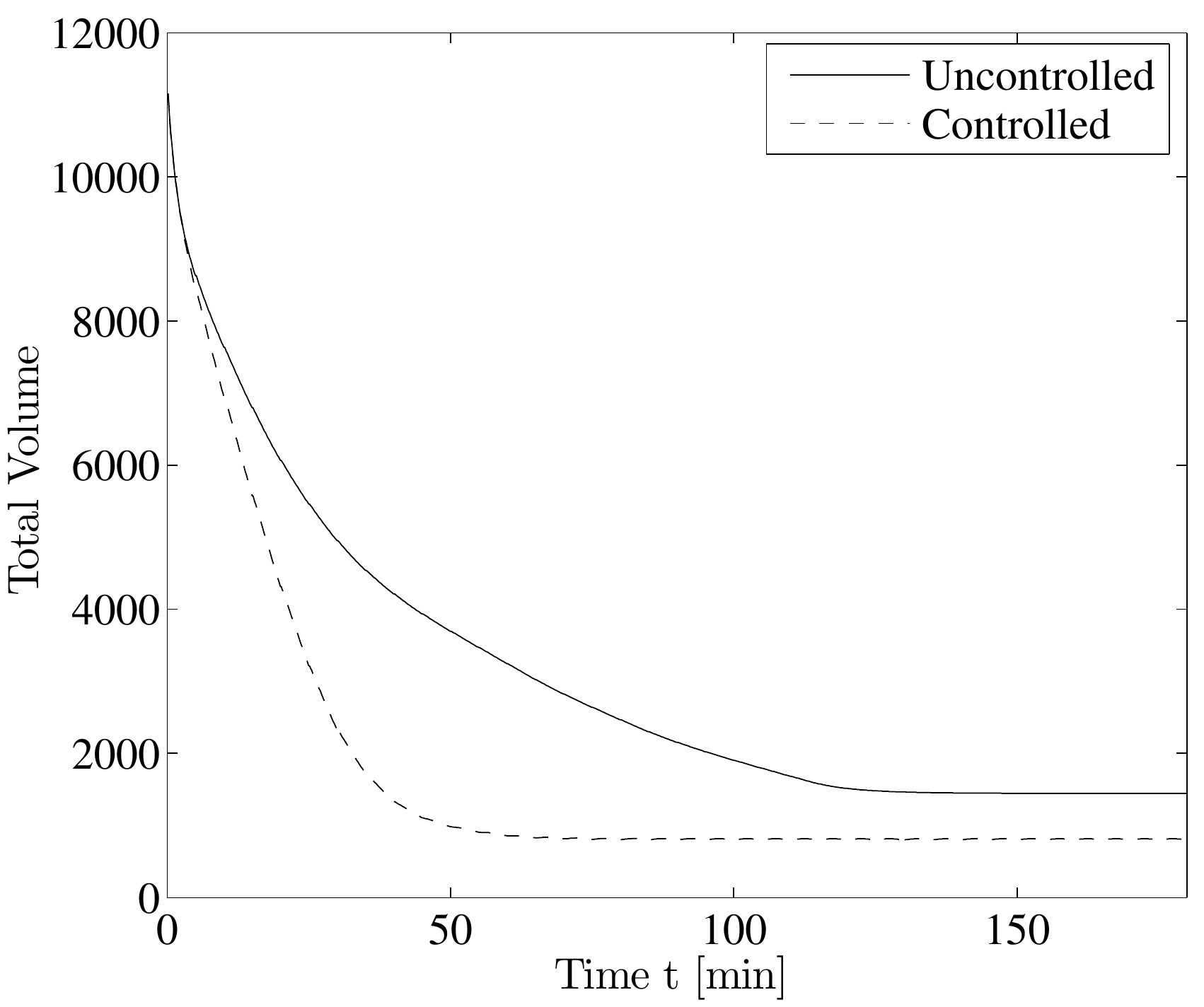}
\end{tabular}
\caption{Optimal trajectory selection. Sum of the trajectories of the uncontrolled (solid line) and uncontrolled (dashed line) systems with initial condition $\rho_e(0)=\rhomax/2$ vehicles for all $e \in \E$, except $\rho_e(0) = 50$ vehicles if $e$ is an on-ramp.}
\label{fig:ImprovementTrajectoryLongBeach}
\end{figure}

\section{Conclusions}
\label{section:conclusions}
We considered dynamical transportation networks, where the dynamics are governed by the demand and supply functions on cells that relate densities and flows, merging and splitting rules at junctions, and inflows at on-ramps. In particular, this framework includes extensions of the classical Cell Transmission Model to arbitrary network topologies. We provide sufficient conditions for stability of equilibria and periodic orbits for such networks, in terms of the connectivity of a state-dependent dual graph. We also formulated an optimal control synthesis problem, and identified sufficient conditions under which this formulation is convex. 

Future research directions include generalization of the results in this paper to non monotone dynamics, as suggested by Remark~\ref{remark:nonmonotone}, robustness analysis along the lines of our previous work~\cite{ComoPartITAC13,ComoPartIITAC13}, and developing scalable implementations of the optimal control solutions.

%%%%%%%%%%%%%%%%%%%%%%%%%%%%%%%%%%%%%%%%%%%%%%%%%%%%%%%%%%%%%%%%%%%%%%%%%%%%%%%%

\appendix

\section{Nonlinear dynamical sytems}
\label{dynamicalSystems}

In this brief appendix, we gather some basic concepts and definitions from nonlinear dynamical systems. We refer to \cite{KhalilBook:02} for a thorough treatment. 
A dynamical system with state $x \in \X$ is a system of the type
	\begin{equation}
	\label{eq:dynSysApp}
		\dot{x} = g(t, x)\,.
	\end{equation}
If $g(t,x) = g(x)$ does not depend on time, the system said to be autonomous and the evolution does not depend on the initial time. Otherwise, the system is said to be non-autonomous. 

An element $x^* \in\X$ is an equilibrium for \eqref{eq:dynSysApp} if $g(x^*,t ) = 0$ for all $t\geq t_0$, so that the solution with initial condition $x^*$ is such that $x(t) = x^*$ for all $t\geq t_0$. A solution $x(t)$ is a nontrivial periodic solution if there exists $T > 0$ such that $x(t+T) = x(t)$ for all $t\geq t_0$. A solution $x(t)$ is converging if $\lim\limits_{t\to+\infty}x(t) \in \X$ exists.

For an autonomous system, an equilibrium $x^*$ is
	\begin{itemize}
		\item stable, if, for each $\varepsilon > 0$, there exists $\delta(\varepsilon)>0$ such that 
			$$
				||x(t_0) - x^*|| < \delta \Rightarrow ||x(t) - x^*||<\varepsilon, \forall t\geq t_0
			$$
		for some norm $||\cdot||$ in $\X$;
		\item locally asymptotically stable, if it is stable and there exists $\delta > 0$ such that
			$$
				||x(t_0) - x^*|| < \delta \Rightarrow \lim_{t\to\infty}x(t) = x^*
			$$	
		\item globally asymptotically stable, if it is stable and, for any $x(t_0)\in\X$
			$$
				\lim_{t\to\infty}x(t) = x^*
			$$	
	\end{itemize}
	
For an autonomous system, a periodic trajectory $x^p(t)$ with $x^p(t+T) = x^p(t)$ for all $t\geq t_0$ is
	\begin{itemize}
		\item stable, if, for each $\varepsilon > 0$, there is $\delta = \delta(\varepsilon)$ such that 
			$$
				||x(t_0) - x^p(t_0)|| < \delta \Rightarrow ||x(t) - x^p(t)||<\varepsilon, \forall t\geq t_0
			$$
		for some norm $||\cdot||$ in $\X$;
		\item locally asymptotically stable, if it is stable and there exists $\delta > 0$ such that
			$$
				||x(t_0) - x^p(t_0)|| < \delta \Rightarrow \lim_{t\to\infty}||x(t) - x^p(t)|| = 0
			$$	
		\item globally asymptotically stable, if for any $x(t_0)\in\X$
			$$
				 \lim_{t\to\infty}||x(t) - x^p(t)|| = 0
			$$	
	\end{itemize}

\section{Technical results}
\label{technicalresults}

\subsection{\texorpdfstring{$\ell_1$}{l1}  contraction principle}

The next result is a simple adaptation of the $\ell_1$ contraction principle for monotone dynamical systems with mass conservation that is proven in \cite{Como.Lovisari.ea:TCNS13}.

\begin{lemma}
\label{lem:l1-contraction} Let $g: \RR_+^m\times \RR_+ \to\RR^m$, $(x,t)\to g(x,t)$ be a Lipschitz map such that
    \begin{equation}
    \label{equation:contractionAssumption1}
        \frac{\partial}{\partial x_j}g_i(x,t) \geq0\,,\qquad \forall\,i\ne j\in\{1,\ldots,m\}, \forall t\geq 0
    \end{equation}
and that
    \begin{equation}
    \label{equation:contractionAssumption2}
            \sum_{1\le i\le m}\frac{\partial}{\partial x_j}g_i(x,t) \leq 0\,,\qquad \forall\,j \in\{1,\ldots,m\}
    \end{equation}
for every $x\in \RR_+^m$, $t\geq0$. Then 
	\begin{equation} 
		\label{ineq:contraction}
		\sum_{1\le i\le m}\sgn{x_i-y_i} \left( g_i(x,t) -g_i(y,t) \right)\leq 0,\mbox{ } \forall x,y \in \RR_+^m, t\geq0\,.
\end{equation}
\end{lemma}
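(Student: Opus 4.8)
The plan is to reduce the statement to a pointwise algebraic inequality on the Jacobian of $g$ along the segment joining $y$ to $x$. First I would assume $g(\cdot,t)$ is $C^1$ (deferring the merely-Lipschitz case to an approximation argument at the end), fix $t$, write $z=x-y$ and $\sigma_i=\sgn{z_i}\in\{-1,0,1\}$, and apply the fundamental theorem of calculus along $s\mapsto y+sz$, $s\in[0,1]$, which stays in the convex set $\RR_+^m$:
$$g_i(x,t)-g_i(y,t)=\int_0^1\sum_j A_{ij}(s)\,z_j\,\de s\,,\qquad A_{ij}(s):=\frac{\partial g_i}{\partial x_j}(y+sz,t)\,.$$
Multiplying by $\sigma_i$, summing over $i$, and interchanging sum and integral reduces the claim to showing that the integrand $\sum_{i,j}\sigma_i A_{ij}(s)\,z_j$ is nonpositive for every $s$.

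The key step is this pointwise bound. Writing $z_j=|z_j|\sigma_j$ and grouping by the column index $j$, the integrand equals $\sum_j |z_j|\bigl(\sum_i \sigma_i\sigma_j A_{ij}(s)\bigr)$, so it suffices to show $\sum_i\sigma_i\sigma_j A_{ij}(s)\le 0$ for each $j$ with $z_j\neq0$. For such $j$ one has $\sigma_j^2=1$, so I would split off the diagonal term and bound the off-diagonal terms using the two hypotheses: by \eqref{equation:contractionAssumption1}, $A_{ij}(s)\ge0$ for $i\neq j$, and since $\sigma_i\sigma_j\le1$ always, $\sigma_i\sigma_j A_{ij}(s)\le A_{ij}(s)$; hence
$$\sum_i\sigma_i\sigma_j A_{ij}(s)\le A_{jj}(s)+\sum_{i\neq j}A_{ij}(s)=\sum_i A_{ij}(s)\le0\,,$$
where the final inequality is the column-sum condition \eqref{equation:contractionAssumption2}. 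Thus each term $|z_j|(\cdots)$ is nonpositive (the terms with $z_j=0$ vanish), the integrand is $\le0$ pointwise, and integrating over $s\in[0,1]$ yields \eqref{ineq:contraction}.

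For the general Lipschitz case I would approximate $g(\cdot,t)$ by mollifications $g_\varepsilon=g(\cdot,t)*\phi_\varepsilon$ with $\phi_\varepsilon\ge0$; convolution against a nonnegative kernel preserves both \eqref{equation:contractionAssumption1} and \eqref{equation:contractionAssumption2}, since the relevant a.e.-defined partial derivatives are averaged against $\phi_\varepsilon$. The $C^1$ argument then applies to each $g_\varepsilon$, and I would pass to the limit $\varepsilon\downarrow0$ using uniform convergence of $g_\varepsilon\to g$ on compact sets, noting that the finitely many indices with $z_i=0$ contribute nothing. The one genuine obstacle is exactly this technical point: for merely Lipschitz $g$ the fixed segment $s\mapsto y+sz$ may meet the Lebesgue-null set of non-differentiability in a set of positive one-dimensional measure, so the fundamental-theorem-of-calculus identity cannot be applied on the raw segment. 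The mollification route circumvents this, as does the equivalent device of averaging over a small family of parallel segments and invoking Fubini (for a.e.\ transversal translation the line meets the bad set in a $1$-D null set). Since the statement is explicitly an adaptation of the result proved in \cite{Como.Lovisari.ea:TCNS13}, I expect these measure-theoretic details to be routine, the substance being the sign computation above.
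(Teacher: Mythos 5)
Your proof is correct. Note that the paper itself does not prove this lemma: it states it as "a simple adaptation" of a result from \cite{Como.Lovisari.ea:TCNS13} and defers entirely to that reference, so there is no in-paper argument to match line by line. The closest internal analogue is the proof of Lemma~\ref{lemma:l1strictlydecreasing} in Appendix~\ref{technicalresults}, which runs the same kind of Jacobian sign-counting but organized differently: it splits the index set into $\I=\{i:x_i>y_i\}$ and $\J=\{i:x_i<y_i\}$, routes through the intermediate point $\xi$ with $\xi_i=\max(x_i,y_i)$, and expresses the quantity as a combination of path integrals $\Gamma^{\A}_{\B}$, each of which has a definite sign by monotonicity (off-diagonal nonnegativity) or mass conservation (nonpositive column sums). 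Your version integrates the Jacobian along the single straight segment from $y$ to $x$ and groups by columns, reducing everything to the pointwise bound $\sum_i\sigma_i\sigma_jA_{ij}\le\sum_iA_{ij}\le0$; this is cleaner and avoids introducing $\xi$ and the two L-shaped paths, while the paper's decomposition has the advantage of isolating exactly which terms ($\Gamma^{\I^c}_{\I}$, $\Gamma^{\J^c}_{\J}$, $\Gamma^{\E}_{\I}$, $\Gamma^{\E}_{\J}$) must be strictly signed to get the \emph{strict} decrease needed in Lemma~\ref{lemma:l1strictlydecreasing}. Your handling of the Lipschitz issue is also sound; the only detail worth a sentence in a full write-up is that $g(\cdot,t)$ lives on the closed orthant $\RR_+^m$, so the mollification must either use a Lipschitz extension (checking, or arranging, that the sign conditions survive extension) or an inward-shifted kernel so that the convolution only samples points of $\RR_+^m$ --- a routine fix that does not affect the limit.
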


\subsection{Proof of Lemma~\ref{lemma:l1strictlydecreasing}}

Let $\rho^{(1)}(\cdot)$ and $\rho^{(2)}(\cdot)$ be two solutions of \eqref{eq:systfirst} starting at time $t$ with initial conditions $\rho^{(1)}(t) = \rho^{(1)}$ and $\rho^{(2)}(t) = \rho^{(2)}$, and assume $\H(\rho^{(1)},t)$ to be rooted. Let 
	$$
		\rho^c = \rho^{(1)} - \frac{\varepsilon}{||\rho^{(1)} - \rho^{(2)}||_1}(\rho^{(1)} - \rho^{(2)})
	$$
for $\varepsilon>0$ small enough. Then it holds
	\begin{align*}
		||\rho^{(1)} - \rho^c||_1 & = \varepsilon 	\\
		||\rho^{(1)} - \rho^{(2)}||_1 & = ||\rho^{(1)} - \rho^c||_1 + ||\rho^c - \rho^{(2)}||_1
	\end{align*}
Let $\rho^c(\cdot)$ be the solution of the system starting at time $t$ with initial condition $\rho^c$, namely $\rho^c(t) = \rho^c$. Then by the triangle inequality and Theorem~\ref{theo:monotonicity+contraction},
	\begin{align*}
		||\rho^{(1)}(t+h) - \rho^{(2)}(t+h)||_1
			&	\leq ||\rho^{(1)}(t+h) - \rho^c(t+h)||_1 + ||\rho^c(t+h) - \rho^{(2)}(t+h)||_1
				\\
			&	\leq ||\rho^{(1)}(t+h) - \rho^c(t+h)||_1 + ||\rho^c - \rho^{(2)}||_1
				\\
			&	= ||\rho^{(1)}(t+h) - \rho^c(t+h)||_1 + ||\rho^{(1)} - \rho^{(2)}||_1 - ||\rho^{(1)} - \rho^c||_1 
	\end{align*}
so (recall that $\rho^{(1)}(t) = \rho^{(1)}$ and $\rho^c(t) = \rho^c$)
	$$
		||\rho^{(1)}(t+h) - \rho^{(2)}(t+h)||_1 - ||\rho^{(1)}(t) - \rho^{(2)}(t)||_1
		\leq
		||\rho^{(1)}(t+h) - \rho^c(t+h)||_1 - ||\rho^{(1)}(t) - \rho^c(t)||_1	
	$$
hence $\frac{\de}{\de t}||\rho^{(1)}(t)-\rho^{(2)}(t)||_1<0$ if we prove that $\frac{\de}{\de t}||\rho^{(1)}(t)-\rho^c(t)||_1<0$, namely, that
	$$
		\sum_{i}\sgn{\rho^{(1)}_{i} - \rho_i^c}(g_i(\rho^{(1)},t) - g_i(\rho^c,t)) < 0
	\,.
	$$
To this aim, for $\A\subseteq\E$, put $\A^c:=\E\setminus \A$, and $g_{\A}(z):=\sum_{i\in\A}g_i(z)$. Let $ \I = \{i: \rho^{(1)}_i > \rho^c_i\}$, $\J = \{i: \rho^{(1)}_{i} < \rho^c_i\}$. Let $\xi\in\S$ be such that $\xi_i = \rho^{(1)}_{i}$ for $i \in \I$ and $\xi_i = \rho_i^c$ for $i \in \I^c$. Consider the
segments $\gamma_{\I}$ from $\rho^c$ to $\xi$ and $\gamma_{\J}$ from $\rho^{(1)}$ to $\xi$. For $\A\subseteq\E$,
and $\B\in\{\I,\J\}$, define the path integral
    $$
        \Gamma^{\A}_{\B}:=\int_{\gamma_{\B}}\nabla g_{\A}(z)\cdot \de z\,.
    $$
Then
    \begin{align}
      	g_{\I}(\rho^{(1)})-g_{\I}(\rho^c)
        	&	=\ds  \Gamma^{\I}_{\I}-\Gamma^{\I}_{\J} = -\Gamma^{\I^c}_{\I} + \Gamma_{\I}^\E - \Gamma^{\I}_{\J} \label{ineq1}
        		\\
       g_{\J}(\rho^{(1)})-g_{\J}(\rho^c) 
       		&	= \Gamma^{\J}_{\I}-\Gamma^{\J}_{\J} = \Gamma^{\J}_{\I}+\Gamma^{\J^c}_{\J} - \Gamma_{\J}^\E \,.\label{ineq2}
    \end{align}

It thus holds true
	\begin{align*}
		\sum_{i}\sgn{\rho^{(1)}_{i} - \rho_i^c}(g_i(\rho^{(1)},t) - g_i(\rho^c,t))
			&	=	g_{\I}(\rho^{(1)})-g_{\I}(\rho^c)-g_{\J}(\rho^{(1)})+g_{\J}(\rho^c)	\\
			& = -\Gamma^{\I^c}_{\I} - \Gamma^{\I}_{\J}
			 -\Gamma^{\J}_{\I}-\Gamma^{\J^c}_{\J} + \Gamma_{\I}^\E  + \Gamma_{\J}^\E
	\end{align*}
where notice that by monotonicity $\Gamma^{\I^c}_{\I}\geq 0$, $\Gamma^{\I}_{\J}\geq 0$, $\Gamma^{\J}_{\I} \geq0$ and $\Gamma^{\J^c}_{\J}\geq0$, and that since, for any $\rho\in\S$, $\sum_{i\in\E}g_i(\rho,t) = \sum_{i\in\R}\lambda_i - \sum_{i\in\R^o}d_i(\rho_i)$, then $\Gamma_{\I}^\E\leq 0$ and $\Gamma_{\J}^\E\leq0$.

Let $\varepsilon>0$ be small enough so that $\H(\rho,t) = \H(\rho^{(1)},t)$ for all $\rho\in\gamma_{\I}\cup\gamma_{\J}$. Moreover, let $\E$ be partitioned as $\E = \E_1\cup\E_2\cup\dots\E_r$, and the sets $\E_k$ be defined iteratively as follows: $\E_1 = \R^o$, the set of offramps. Let $\E_1, \ldots, \E_k$ be given. $\E_{k+1}$ is the set of all $j \in \E$ such that there exists $i\in\E_k$ and $(j,i)$ is an edge of $\H(\rho^{(1)},t)$, but $(j,e)$ is not an edge of $\H(\rho^{(1)},t)$ for all $e\in\E_l$, $l<k$. Since $\H(\rho^{(1)},t)$ is rooted, $\E_1\cup\E_2\cup\dots\E_r$ is indeed a partition of $\E$. Then
	\begin{itemize}
		\item if $\rho^{(1)}_{i} \neq \rho_i^c$ for some $i\in\E_1$, then at least one among $\Gamma_{\I}^\E$ and $\Gamma_{\J}^\E$ is strictly negative;
		\item assume that $\rho^{(1)}_{i}  = \rho_i^c$ for all $i\in\E_1\cup\dots\cup\E_{k-1}$ and $\rho^{(1)}_{i} \neq\rho^c_i$ for some $i\in\E_k$. Then, by the rooted assumption, at least one among $\Gamma^{\I^c}_{\I}$ and $\Gamma^{\J^c}_{\J}$ is strictly positive.
	\end{itemize}
In both cases, $\sum_{i}\sgn{\rho^{(1)}_{i}  - \rho_i^c}(g_i(\rho^{(1)},t) - g_i(\rho^c,t))< 0$ as required.

% Generated by IEEEtran.bst, version: 1.13 (2008/09/30)

\end{document}